\numberwithin{figure}{section}
\numberwithin{equation}{section}
\title{On the lattice of weighted partitions}
\author[K.~Shigechi]{Keiichi~Shigechi}
\email{k1.shigechi AT gmail.com}
\date{\today}
\newcommand\tikzpic[2]{
\raisebox{#1\totalheight}{
\begin{tikzpicture}
#2
\end{tikzpicture}
}}
\newtheorem{theorem}[figure]{Theorem}
\newtheorem{example}[figure]{Example}
\newtheorem{lemma}[figure]{Lemma}
\newtheorem{defn}[figure]{Definition}
\newtheorem{prop}[figure]{Proposition}
\newtheorem{cor}[figure]{Corollary}
\newtheorem{remark}[figure]{Remark}
\begin{document}
\begin{abstract}
We introduce and study the lattice of generalized partitions, called weighted partitions.
This lattice possesses similar properties of the lattice of partitions.
By use of the pictorial representation of a weighted partition, the total number 
is given by the successive Stirling transforms of the Stirling number of the second kind. 
We construct an explicit $EL$-labeling on the lattice, which implies this lattice is $EL$-shellable and hence shellable.
We compute the M\"obius function and the characteristic polynomial by use of a pictorial representation of a 
maximal decreasing chain. 
Further, a maximal decreasing chain is shown to be bijective to a labeled rooted complete binary tree.
\end{abstract}

\maketitle

\section{Introduction}
The lattice $\Pi_{n}$ of partitions of the set $[n]:=\{1,2,\ldots,n\}$
is a well-studied combinatorial object.
The numbers of elements of rank $r$ in $\Pi_{n}$ are known as 
the Stirling numbers of the second kind, and the total number is 
the Bell numbers. 
The M\"obius function $\mu(\Pi_{n})$ of $\Pi_{n}$ and 
the characteristic polynomial $\chi_{\Pi_{n}}(x)$ are 
given by \cite{Sta97b1}:
\begin{align*}
\mu(\Pi_{n})&=(-1)^{n-1}(n-1)!, \\ 
\chi_{\Pi_{n}}(x)&=\prod_{j=1}^{n-1}(x-j).
\end{align*}
One way to compute the polynomial $\chi_{\Pi_{n}}(x)$ and $\mu(\Pi_{n})$ is 
to construct an explicit edge-labeling, introduce by Stanley \cite{Sta74}, in the partially ordered set (poset), which 
is called $EL$-labeling.
Then, by a general theory of an edge labeling \cite{Bjo80,Bjo82,Bjo92,BjoGarSta82}, to compute $\mu(\Pi_{n})$ is 
equivalent to count the number of chains with a certain property.
The existence of an $EL$-labeling implies that the partially ordered set is $EL$-shellable, shellable, and 
hence Cohen--Macaulay.  
Besides these, the topological aspect of $\Pi_{n}$, the homology of $\Pi_{n}$, 
has been studied. 
For example, it was shown in \cite{Bjo80} that the only non-vanishing homology groups 
are in top dimension.
Since the poset $\Pi_{n}$ admits a natural action of the symmetric group $\mathbb{S}_{n}$, 
the homology groups of top dimension can be regarded as an $\mathbb{S}_{n}$-module.
By the computation of the representations of $\mathbb{S}_{n}$, the homology groups 
are $\mathbb{S}_{n}$-modules isomorphic to the tensor product of the multi-linear component of the free
Lie algebra of $n$ generators and the sign representation of $\mathbb{S}_{n}$ (see e.g. \cite{Bar90,BarBer90} 
and references therein).

There are several generalizations of $\Pi_{n}$. 
A weighted version of $\Pi_{n}$ is introduced in Dotsenko and Khoroshkin
in the study of binary operads \cite{DotKho07}, and further studied by  Gonz\'alez D'Le\'on and Wachs 
\cite{GonDLeWac16}.
In \cite{Bar90,BarBer90,BjoWac83,GonDLeWac16,Wac98}, it is shown that several properties of the weighted partitions, which 
are similar to those of the partitions: 
$EL$-shellability \cite{BjoWac83}, the M\"obius function by use of rooted trees, bases for homology and 
cohomology \cite{Wac98}, and connection to the free Lie algebra with two types of brackets \cite{GonDLeWac16} 
by generalizing correspondence between the partition lattice and free Lie algebra \cite{Bar90,BarBer90}. 
In \cite{Shi22}, the author introduced the non-commutative crossing partitions, which 
are another generalization of partitions. In the theory of $\Pi_{n}$, the order of the blocks 
of a partition is irrelevant. However, in the non-commutative crossing partitions, 
the order of the blocks plays an important role. By generalizing the notion of 
the parking function, the M\"obius function is calculated. 

In this paper, we introduce and study another generalization of a partition, 
which we call a weighted partition by abuse of notation.
Suppose that a partition has $r$ blocks $B_1,\ldots, B_{r}$, where $B_{i}$ is a 
non-empty set of $[n]$, $B_{i}\cap B_{j}=\emptyset$ for $i\neq j$
and $\bigcup_{i}B_{i}=[n]$.
Similarly, given $k\ge2$, a weighted partition has $kr$ blocks $B_{1}^{(k')},\ldots,B_{r}^{(k')}$
with $1\le k'\le k$, where the blocks $B_{i}^{(1)}$ are non-empty, and some blocks for $k'\ge2$ may be empty
or contain at least two elements.
If we take $k$ blocks $B_{i}^{(k')}$, $1\le k'\le k$, for some $i\in[r]$, 
the block $B_{i}^{(k'+1)}$ is again a partition of the block $B_{i}^{(k')}$.
Each block $B_{i}^{(k')}$ is called a $k'$-th layer for a given $i$.
We denote by $\mathcal{P}_{n}^{(k)}$ be the set of weighted partitions 
which have $k$ layers (see Section \ref{sec:wp} for a detailed explanation). 

Let $s(n,r)$ and $S(n,r)$ be the Stirling numbers of the first and the second kinds.
Recall that the number of partitions of $[n]$ with $r$ blocks is given by the number $S(n,r)$, 
and the number of permutations of $n$ elements with $r$ disjoint cycles is given by $(-1)^{n-r}s(n,r)$.
In the case of the weighted partitions in $\mathcal{P}_{n}^{(k)}$, the number $T(n,k,r)$ of weighted partitions with $r$ blocks
is given by $k$ successive Stirling transforms of $S(n,r)$.
Thus, a weighted partition is bijective to a tuple of partitions.
We also introduce a combinatorial object, a $k$-level labeled rooted trees with $n$ leaves, 
and show that a labeled tree is bijective to a weighted partition in $\mathcal{P}_{n}^{(k)}$ up to an equivalence relation.
Since $S(n,r)$ and $s(n,r)$ are the inverses of each other, the inverse $t(n,k,r)$ of $T(n,k,r)$ is given by 
$k$ successive inverse Stirling transforms of $s(n,r)$.
We also introduce a diagram for the numbers $t(n,k,r)$, and show that the diagram is bijective to 
a tuple of disjoint cycles.
As a consequence of the Stirling transforms, we give formulae for exponential generating functions of 
$t(n,k,r)$ and $T(n,k,r)$. These expressions lead to the recurrence relations for 
the numbers $t(n,k,r)$ and $T(n,k,r)$.

By extending the definitions of the covering relation and the rank function of $\Pi_{n}$ 
to $\mathcal{P}_{n}^{(k)}$, we show that the partially ordered set 
$\mathcal{P}_{n}^{(k)}$ gives a graded lattice $\mathcal{L}_{n}^{(k)}$  by adding the maximum element
to $\mathcal{P}_{n}^{(k)}$.
We first give an explicit $EL$-labeling of the lattice $\mathcal{L}_{n}^{(k)}$, which is a generalization 
of the $EL$-labeling for $\Pi_{n}$ in \cite{Bjo80,Sta74}.
The existence of an $EL$-labeling implies that the lattice $\mathcal{L}_{n}^{(k)}$ is shellable, 
and the M\"obius function is given by the number of maximal decreasing chains (see e.g. \cite{Bjo80}) in the 
Hasse diagram of $\mathcal{L}_{n}^{(k)}$.
By introducing a diagram which is similar to the one for $t(n,k,r)$, we give formulae for the M\"obius 
function $\mu(\mathcal{L}_{n}^{(k)})$ of $\mathcal{L}_{n}^{(k)} 
$and the characteristic polynomials $P_{n}^{(k)}(x)$ 
of $\mathcal{P}_{n}^{(k)}$. Namely, by Theorem \ref{thrm:Gamma} and Proposition \ref{prop:charpol},
we have 
\begin{align*}
\mu(\mathcal{L}_{n}^{(k)})&=(-1)^{n}\prod_{j=0}^{n-2}(k(j+1)-1), \\
P_{n}^{(k)}(x)&=\prod_{j=0}^{n-1}(x-kj).
\end{align*}
It may be interesting to observe that characteristic polynomials involve the Stirling 
numbers of the first kind, which comes from the similarity of the diagrams.
We also show that a maximal decreasing chain of $\mathcal{L}_{n}^{(k)}$ 
is bijective to a labeled complete binary trees with $n$ leaves.
A labeled complete binary tree appearing in this paper is a generalization 
of that in \cite{Bar90}.

The paper is organized as follows.
In Section \ref{sec:wp}, we define a weighted partition and study 
enumerations by introducing pictorial representations.
We show that the number of weighted partitions of rank $r$ is given by
the Stirling transforms of the Stirling number $S(n,r)$ of the 
second kind.
In Section \ref{sec:cr}, we introduce a covering relation for 
weighted partitions, and show that the poset $\mathcal{L}_{n}^{(k)}$ 
is a graded lattice.
In Section \ref{sec:EL}, we give an explicit $EL$-labeling for the lattice, 
and summarize its consequence.
Finally, in Section \ref{sec:CM}, we compute the M\"obius function and 
the characteristic polynomial for $\mathcal{L}_{n}^{(k)}$.
We also give a bijective correspondence between a maximal decreasing chain 
and a labeled complete binary tree.

\section{Weighted partitions}
\label{sec:wp}
\subsection{Definition}
A partition $\pi$ of the set $[n]:=\{1,2,\ldots,n\}$ consists of 
$m$ non-empty blocks $\{B_{p}: p\in[m]$\} such that 
the blocks satisfy $B_{p}\cap B_{q}=\emptyset$ if $p\neq q$ and 
$\bigcup_{p\in[m]} B_{p}=[n]$.
Since each integer $i\in[n]$ appears exactly once in $B_{p}$, 
we write $B_{p}$ as $i_{1}\ldots i_{l}$ where $l$ is the cardinality 
of $B_{p}$ and $i_{j}<i_{j+1}$ for $j=1,\ldots,l-1$.
A partition $\pi$ is written as a concatenation of $B_{p}$'s where 
we separate $B_{p}$ and $B_{p+1}$ by a slash ``/".
We call this one-line notation of $\pi$.
For example, $\pi=135/24/6$ means that we have three blocks 
$B_{1}=\{1,3,5\}$, $B_{2}=\{2,4\}$ and $B_{3}=\{6\}$.
Further, we ignore the order of the blocks. 
This means that $135/24/6$ is equivalent to $6/24/135$ and other several expressions.

Suppose that a partition $\pi$ of the set $[n]$ consists of $p$ blocks.
Since each block can be regarded as a set of integers, one can 
consider a partition of the elements of the block.
We call a partition of the block a block of the second layer.
The second layer of a partition also can be regarded as a set of integers.
We can consider a partition of the block in the second layer and 
obtain a block of the third layer.
Then, by repeating the above procedure, one can obtain a block of
the $k$-th layer.
A block in the first layer contains at least one elements.
We consider the following condition on a partition:
\begin{enumerate}
\item[($\star$)]
A block in the $k'$-th layer with $2\le k'\le k$ is an empty set or contains sets consisting of 
at least two elements.
\end{enumerate}

\begin{defn}
\label{defn:Pnk}
We denote $\pi^{(k)}$ as a partition consisting of up to the $k$-th layer and 
satisfying the condition ($\star$), and call it \emph{a weighted partition}.
We denote by $\mathcal{P}^{(k)}_{n}$ the set of weighted partitions $\pi^{(k)}$ of
the set $[n]$.
Similarly, we define $\mathcal{P}_{n}^{(k)}(r)$ as the set of weighted partitions 
with $r$ blocks.
\end{defn}
Note that we have standard partitions of $n$ if we set $k=1$.

We denote by $B^{(k)}_{p}$ a block of a partition of the $k$-th layer.
The one-line notation of $\pi^{(k)}$ is similarly defined as in the 
case of a partition.
For example, $\pi^{(3)}=1(35)^{2}/(24)^3/6$ 
means that 
\begin{align*}
&B^{(1)}_1=\{1,3,5\}, \quad  B^{(1)}_2=\{2,4\}, \quad B^{(1)}_3=\{6\}, \\
&B^{(2)}_1=\{3,5\}, \quad B^{(2)}_2=\{2,4\}, \quad B^{(2)}_3=\emptyset\\
&B^{(3)}_1=\emptyset, \quad B^{(3)}_2=\{2,4\} \quad B^{(3)}_{3}=\emptyset.
\end{align*}
Note that we have $B_{i}^{(1)}\supseteq B_{i}^{(2)}\supseteq B_{i}^{(3)}$ 
for all $i\in[3]$.

\begin{remark}
Two remarks are in order:
\begin{enumerate}
\item
By the construction of a weighted partition of the $k$-th layer, 
the block $B_{i}^{(k)}$ consists of blocks of the set $B_{i}^{(k-1)}$.
For example, $(12)^{2}(34)^2$ means that 
$B_{1}^{(1)}=\{1,2,3,4\}$ and $B_{1}^{(2)}=12/34=\{\{1,2\},\{3,4\}\}$.
The block $B_1^{(2)}$ is an refinement of the block $B_{1}^{(1)}$ and 
satisfies the condition ($\star$).
\item 
The condition ($\star$) implies that we have no weighted partition 
such as $13/(2)^24$.
In this example, we have 
\begin{align*}
&B_{1}^{(1)}=\{1,3\}, \quad B_{2}^{(1)}=\{2,4\}, \\
&B_{1}^{(2)}=\emptyset, \quad B_{2}^{(2)}=\{2\}. 
\end{align*}
The block $B_{2}^{(2)}$ in the second layer violates the condition ($\star$).
\end{enumerate}
\end{remark}

For later purpose, we introduce a circular pictorial notation of 
weighted partitions.
Recall that a partition $\pi^{(k)}$ consists of $mk$ blocks $B^{(j)}_{p}$ where 
$p\in[m]$ and $j\in[k]$.
Let $C$ be a circle with $n$ points which are labeled $1$ to $n$ clockwise.	
Since $B^{(k)}_{p}$ consists of several blocks $B'_{q}$, we connect 
all pairs of labels, which appear in $B'_{q}$, by lines and put an integer 
$k$ on the lines.
This is well-defined since the block $B'_q$ contains at least two elements 
by the condition ($\star$).
Since $B^{(k)}_{p}$ is a refinement of $B^{(k-1)}_{p}$, there may be 
pairs of labels in $B^{(k-1)}_{p}$ which do not appear as a pair in $B^{(k)}_{p}$.
We connect such pairs in $C$ by a line, and put an integer $k-1$.
By repeating this procedure by decreasing $k$ one-by-one, we obtain 
a circular presentation of a weighted partition $\pi^{(k)}$.
The circular presentation of $\pi^{(3)}=1(35)^{2}/(24)^{3}/6$ 
is depicted in Figure \ref{fig:cr}.
The pair of labels $(2,4)$ is in the third layer (connected by a red line), $(3,5)$ is 
in the second layer (connected by a blue line), and the pairs $(1,3)$ and $(1,5)$ are 
in the first layer.

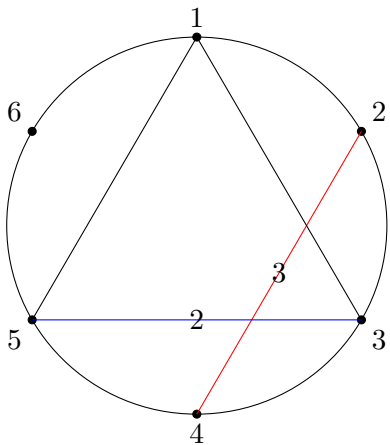
\begin{figure}[ht]
\begin{tikzpicture}
\draw circle(2.5cm);
\foreach \a in {90,30,-30,-90,-150,-210}
\filldraw [black] (\a:2.5cm)circle(1.5pt);
\draw (90:2.5cm) node[anchor=south] {$1$};
\draw (30:2.5cm) node[anchor=south west] {$2$};
\draw (-30:2.5cm) node[anchor=north west] {$3$};
\draw (-90:2.5cm) node[anchor=north] {$4$};
\draw (-150:2.5cm) node[anchor=north east] {$5$};
\draw (-210:2.5cm) node[anchor=south east] {$6$};
\draw[red](30:2.5cm)to(-90:2.5cm);
\draw ($(30:2.5cm)!0.5!(-90:2.5cm)$)node{$3$};
\draw (-150:2.5cm) to (90:2.5cm) to (-30:2.5cm);
\draw[blue] (-150:2.5cm) to (-30:2.5cm);
\draw ($(-150:2.5cm)!0.5!(-30:2.5cm)$) node{$2$};
\end{tikzpicture}
\caption{A circular presentation of $\pi^{(3)}=1(35)^{2}/(24)^{3}/6$. 
The blue (resp. red) line indicates the second (resp. third) layer.}
\label{fig:cr}
\end{figure}

\subsection{Enumeration}
We briefly review Stirling numbers of the first and second kind, and 
their properties.

The falling factorial $(x)_{n}:=x(x-1)\cdots(x-n+1)$ is expanded in terms 
of the Stirling numbers $s(n,k)$ of the first kind as 
\begin{align}
\label{eq:defsn}
(x)_{n}=\sum_{k=0}^{n}s(n,k)x^{k}.
\end{align}
Equivalently, we have 
\begin{align*}
\sum_{n\ge k}s(n,k)\genfrac{}{}{}{}{x^n}{n!}=\genfrac{}{}{}{}{1}{k!}(\log(1+x))^{k}.
\end{align*}
The number $|s(n,k)|$ counts the number of permutations in $[n]$ with $k$ disjoint 
cycles.
The exponential generating function of $s(n,k)$ is given by
\begin{align}
\label{eq:egfS1}
\sum_{n,k}s(n,k)\genfrac{}{}{}{}{x^n}{n!}y^{k}=(1+x)^{y}.
\end{align}

The Stirling numbers $S(n,k)$ of the second kind is defined 
by 
\begin{align*}
S(n,k):=\genfrac{}{}{}{}{1}{k!}\sum_{i=0}^{k}(-1)^{i}\genfrac{(}{)}{0pt}{}{k}{i}(k-i)^{n}.
\end{align*}
The numbers $S(n,k)$ count the number to partition the set $[n]$ into $k$ non-empty 
subsets.
The exponential generating function of $S(n,k)$ with a fixed $k$ is given by
\begin{align*}
\sum_{n\ge k}S(n,k)\genfrac{}{}{}{}{x^n}{n!}=\genfrac{}{}{}{}{1}{k!}(e^{x}-1)^{k}.
\end{align*}
Similarly, the  exponential generating function for $S(n,k)$ is given by
\begin{align}
\label{eq:ogfS2}
\sum_{n,k}S(n,k)\genfrac{}{}{}{}{x^n}{n!}y^{k}
=\exp(y(e^{x}-1)).
\end{align}

The Stirling numbers of the first and second kinds are inverse of each other, namely, 
we have 
\begin{align}
\label{eq:sS}
\sum_{j\ge0}s(n,j)S(j,k)=\delta_{nk},
\end{align}
and
\begin{align}
\label{eq:Ss}
\sum_{j\ge0}S(n,j)s(j,k)=\delta_{nk},
\end{align}
where $\delta_{nk}$ is the Kronecker delta function.

We define the set $I(n,k,r)$ by 
\begin{align*}
I(n,k,r):=\left\{\mathbf{i}:=(i_1,\ldots,i_{k-1})| n\ge i_1\ge i_2\ge\ldots\ge i_{k-1}\ge r \right\}.
\end{align*}

We define the numbers $T(n,k,r)$ by 
\begin{align*}
T(n,k,r):=\sum_{\mathbf{i}\in I(n,k,r)}\prod_{j=1}^{k}S(i_{j-1},i_j),
\end{align*}
where $i_0=n$ and $i_{k}=r$.
Similarly, we define the numbers $t(n,k,r)$ by 
\begin{align*}
t(n,k,r):=\sum_{\mathbf{i}\in I(n,k,r)}\prod_{j=1}^{k}s(i_{j-1},i_j),
\end{align*}
where $i_0=n$ and $i_{k}=r$.

As in the case of the Stirling numbers of the first and second kinds,
the two numbers $T(n,k,r)$ and $t(n,k,r)$ are inverse of one another:
\begin{align*}
\sum_{i\ge0}t(n,k,i)T(i,k,j)=\delta_{nj},
\end{align*}
and
\begin{align*}
\sum_{i\ge0}T(n,k,i)t(i,k,j)=\delta_{nj}.
\end{align*}
These identities can be easily shown by use of relations (\ref{eq:sS}) and (\ref{eq:Ss}).

To express the generating functions of $T(n,k,r)$ and $t(n,k,r)$ in a simple form,
we introduce two formal power series $\exp^{(k)}(x,y)$ and $\log^{(k)}(x,y)$.
Let $\exp^{(k)}(x)$, $k\ge1$,	 be a formal power series 
given by 
\begin{align*}
\exp^{(k)}(x)=\exp(\exp^{(k-1)}(x))-1,
\end{align*}
with the initial condition $\exp^{(0)}(x)=\exp(x)-1$.
Similarly, we define a multivariate formal power series $\exp^{(k)}(x,y)$ 
by 
\begin{align}
\label{eq:expk}
\exp^{(k)}(x,y):=\exp(y \exp^{(k-1)}(x)).
\end{align} 

Similarly, let $\log^{(k)}(x)$, $k\ge1$, be the formal power series
given by 
\begin{align*}
\log^{(k)}(x):=1+\log(\log^{(k-1)}(x)),
\end{align*}
with the initial condition $\log^{(0)}(x):=\exp(x)$.
Then, we define 
\begin{align*}
\log^{(k)}(x,y):=(\log^{(k)}(x))^{y}.
\end{align*}

Let us briefly recall the Stirling transforms.
Suppose that two sequences of numbers $\{a_n: 1\le n\}$ and $\{b_{n}: 1 \le n\}$
satisfy the relation:
\begin{align*}
b_{n}=\sum_{k=1}^{n}S(n,k)a_{k}.
\end{align*}
Then, since $S(n,k)$ and $s(n,k)$ are inverse of each other, the inverse Stirling transform 
is given by 
\begin{align*}
a_{n}=\sum_{k=1}^{n}s(n,k)b_{k}.
\end{align*}
If $A(x):=\sum_{n}a_{n}x^n/n!$ and $B(x):=\sum_{n}b_{n}x^n/n!$,
the Stirling transforms and generating functions of Stirling numbers imply that 
\begin{align}
\label{eq:STS1S2}
B(x)=A(e^x-1), \qquad A(x)=B(\log(1+x)).
\end{align}
We apply the Stirling transforms to the Stirling numbers of the first and second kinds 
to obtain the numbers $t(n,k,r)$ and $T(n,k,r)$.
\begin{lemma}
\label{lemma:Tnkr}
The generating function of the numbers $T(n,k,r)$ is given by
\begin{align}
\label{eq:egfTnkr}
\sum_{n,r}T(n,k,r)\genfrac{}{}{}{}{x^{n}}{n!}y^{r}=\exp^{(k)}(x,y).
\end{align}
Similarly, the generating function of the numbers $t(n,k,r)$ is 
given by 
\begin{align}
\label{eq:egftnkr}
\sum_{n,r}t(n,k,r)\genfrac{}{}{}{}{x^{n}}{n!}y^{r}=\log^{(k)}(x,y).
\end{align}
\end{lemma}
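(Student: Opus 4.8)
The plan is to prove both generating function identities by induction on $k$, using the Stirling transform relation \eqref{eq:STS1S2} as the engine that advances the induction one layer at a time. The two statements are dual, so I would first set up the argument for $T(n,k,r)$ and then indicate the parallel computation for $t(n,k,r)$.

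First I would establish the base case $k=1$. When $k=1$, the index set $I(n,1,r)$ consists of a single empty tuple (there are no $i_1,\ldots,i_{k-1}$), and the defining sum for $T(n,1,r)$ collapses to the single term $S(i_0,i_1)=S(n,r)$. Hence the left-hand side of \eqref{eq:egfTnkr} becomes $\sum_{n,r}S(n,r)x^n y^r/n!$, which by \eqref{eq:ogfS2} equals $\exp(y(e^x-1))$. On the right-hand side, $\exp^{(1)}(x,y)=\exp(y\exp^{(0)}(x))=\exp(y(e^x-1))$ by the definition \eqref{eq:expk} together with $\exp^{(0)}(x)=e^x-1$, so the two sides agree.

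Next, for the inductive step I would exploit the fact that $T(n,k,r)$ is built from $T(n,k-1,\,\cdot\,)$ by one more Stirling transform. Concretely, splitting off the innermost sum over $i_{k-1}$ gives the recursion
\begin{align*}
T(n,k,r)=\sum_{i_{k-1}\ge r}T(n,k-1,i_{k-1})\,S(i_{k-1},r),
\end{align*}
since the inner product $\prod_{j=1}^{k-1}S(i_{j-1},i_j)$ summed over $\mathbf{i}\in I(n,k-1,i_{k-1})$ is exactly $T(n,k-1,i_{k-1})$, and the remaining factor is $S(i_{k-1},r)$. Reading this for fixed $n$ as a Stirling transform in the second index, and passing to the $y$-generating function via the relation $\sum_r B_r y^r = \sum_m A_m (e^y-1)^m/\,\text{(appropriate normalization)}$ built into \eqref{eq:STS1S2}, I would show that if $F_{k-1}(x,y):=\sum_{n,r}T(n,k-1,r)x^n y^r/n!$ equals $\exp^{(k-1)}(x,y)$, then substituting $e^y-1$ for the exponentiated variable produces $\exp(y\exp^{(k-1)}(x))=\exp^{(k)}(x,y)$, matching \eqref{eq:expk}. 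The argument for \eqref{eq:egftnkr} is identical with $S\to s$, $\exp\to\log$, and the transform \eqref{eq:STS1S2} run in the inverse direction, using $\log^{(0)}(x)=e^x$ and $\log^{(k)}(x)=1+\log(\log^{(k-1)}(x))$.

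The main obstacle I anticipate is bookkeeping the two-variable nature of the transform: the Stirling transform as recalled in \eqref{eq:STS1S2} is stated for a single sequence, whereas here I must apply it in the $r$-index while carrying the $x^n/n!$ weights untouched. The cleanest way around this is to treat $x$ as an inert parameter and apply \eqref{eq:ogfS2} (respectively \eqref{eq:egfS1}) directly to convert a sum $\sum_r S(i_{k-1},r)y^r$ into the substitution $y\mapsto e^y-1$ inside the generating variable, rather than invoking \eqref{eq:STS1S2} verbatim; verifying that this substitution commutes correctly with the outer exponential in the definition \eqref{eq:expk} is the one place where care is needed. Once that commutation is checked, both identities follow immediately from the recursion and the definitions of $\exp^{(k)}$ and $\log^{(k)}$.
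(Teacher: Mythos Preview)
Your overall plan---induction on $k$ driven by the Stirling transform---is exactly the paper's strategy, and the base case is handled correctly. The gap is in the inductive step: you peel off the \emph{last} factor of the product and write
\[
T(n,k,r)=\sum_{m\ge r}T(n,k-1,m)\,S(m,r),
\]
then try to interpret this as a Stirling transform in the $r$-index. But this is not the transform of \eqref{eq:STS1S2}: the indices on $S$ are reversed, and the $y$-series is ordinary, not exponential. In particular your proposed fix, replacing $\sum_r S(m,r)y^r$ by a substitution $y\mapsto e^y-1$, is incorrect---that sum is the Touchard polynomial $B_m(y)$, not a power of $e^y-1$, so no substitution in $y$ alone realises the passage from $F_{k-1}$ to $F_k$.

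The paper instead peels off the \emph{first} factor,
\[
T(n,k,r)=\sum_{i_1}S(n,i_1)\,T(i_1,k-1,r),
\]
which for fixed $r$ is exactly the Stirling transform $b_n=\sum_m S(n,m)a_m$ in the $n$-index. Since the $x$-series is already exponential, \eqref{eq:STS1S2} applies verbatim with $y$ carried as an inert parameter: one simply substitutes $x\mapsto e^x-1$ in $F_{k-1}(x,y)=\exp^{(k-1)}(x,y)$ and checks that $\exp^{(k-1)}(e^x-1,y)=\exp(y\,\exp^{(k-2)}(e^x-1))=\exp(y\,\exp^{(k-1)}(x))=\exp^{(k)}(x,y)$. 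The two-variable bookkeeping obstacle you anticipated disappears entirely once you split the product at the correct end. The argument for $t(n,k,r)$ is then the parallel one with the inverse transform $x\mapsto\log(1+x)$.
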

\begin{proof}
We first show Eq. (\ref{eq:egfTnkr}) by induction on $k$.
For $k=1$, $T(n,k=1,r)$ is the Stirling number of the second kind,
and Eq. (\ref{eq:egfTnkr}) follows from Eq. (\ref{eq:ogfS2}).
By definition of $T(n,k,r)$, the sequence $T(n,k,r)$ is the Stirling 
transform of $T(n,k-1,r)$.
From the Stirling transform (\ref{eq:STS1S2}), we have Eq. (\ref{eq:egfTnkr}).

Equation (\ref{eq:egftnkr}) can be shown by use of the inverse Stirling transform.
\end{proof}

First few terms of $\exp^{(k)}(x,y)$ and $\log^{(k)}(x,y)$ are as follows:
\begin{align*}
\exp^{(1)}(x,y)&=1+yx+(y+y^2)\genfrac{}{}{}{}{x^2}{2!}+(y+3y^2+y^3)\genfrac{}{}{}{}{x^3}{3!}
+(y+7y^2+6y^3+y^4)\genfrac{}{}{}{}{x^4}{4!}+\cdots, \\
\exp^{(2)}(x,y)&=1+yx+(2y+y^2)\genfrac{}{}{}{}{x^2}{2!}+(5y+6y^2+y^3)\genfrac{}{}{}{}{x^3}{3!}
+(15y+32y^2+12y^3+y^4)\genfrac{}{}{}{}{x^4}{4!}+\cdots, \\
\exp^{(3)}(x,y)&=1+yx+(3y+y^2)\genfrac{}{}{}{}{x^2}{2!}+(12y+9y^2+y^3)\genfrac{}{}{}{}{x^3}{3!}
+(60y+75y^2+18y^3+y^4)\genfrac{}{}{}{}{x^4}{4!}+\cdots, 
\end{align*}
and 
\begin{align*}
\log^{(1)}(x,y)&=1+yx+(-y+y^2)\genfrac{}{}{}{}{x^2}{2!}+(2y-3y^2+y^3)\genfrac{}{}{}{}{x^3}{3!}
+(-6y+11y^2-6y^3+y^4)\genfrac{}{}{}{}{x^4}{4!}+\cdots,\\
\log^{(2)}(x,y)&=1+yx+(-2y+y^2)\genfrac{}{}{}{}{x^2}{2!}+(7y-6y^2+y^3)\genfrac{}{}{}{}{x^3}{3!}
+(-35y+40y^2-12y^3+y^4)\genfrac{}{}{}{}{x^4}{4!}+\cdots,\\
\log^{(3)}(x,y)&=1+yx+(-3y+y^2)\genfrac{}{}{}{}{x^2}{2!}+(15y-9y^2+y^3)\genfrac{}{}{}{}{x^3}{3!} \\
&\quad+(-105y+87y^2-18y^3+y^4)\genfrac{}{}{}{}{x^4}{4!}+\cdots.
\end{align*}

\begin{example}
Let $(n,k)=(3,2)$.
We have one weighted partition with three block: $3/2/1$.
We have six weighted partitions with two blocks: $23/1$, $2/13$, $3/12$, $(23)^2/1$, 
$2/(13)^2$, and $3/(12)^2$.
Similarly, we have five weighted partitions with one block:
$123$, $1(23)^2$, $2(13)^2$, $3(12)^2$ and $(123)^2$. 
From these, we have $T(3,2,3)=1$, $T(3,2,2)=6$ and $T(3,2,1)=5$.
In total, we have $12$ weighted partitions for $(n,k)=(3,2)$.
\end{example}

\begin{remark}
The specialization $\exp^{(1)}(x,y=1)$ gives the exponential generating 
function of the {\it Bell numbers}.
The Bell numbers $B_n$ are given by the sum of the Stirling numbers $S(n,k)$ of the second kind:
\begin{align*}
B_{n}=\sum_{k=0}^{n}S(n,k),
\end{align*}
and satisfy a recurrence relation 
\begin{align*}
B_{n+1}=\sum_{k=0}^{n}\genfrac{(}{)}{0pt}{}{n}{k}B_{k},
\end{align*}
with the initial condition $B_{0}=1$.
\end{remark}

The next proposition gives the number of weighted partitions in $\mathcal{P}_{n}^{(k)}(r)$. 
\begin{prop}
\label{prop:numwp}
Let $1\le r$ be the number of blocks in the first layer in a weighted partition $\pi^{(k)}$.
Then, the number of such weighted partitions is given by $T(n,k,r)$, namely, 
\begin{align*}
|\mathcal{P}_{n}^{(k)}(r)|=T(n,k,r).
\end{align*}
Especially, the number of weighted partitions $T(n,k,r=1)$ is 
given by $\sum_{r=1}^{n}T(n,k-1,r)$, namely, 
\begin{align*}
|\mathcal{P}_{n}^{(k)}(r=1)|=\sum_{r=1}^{n}T(n,k-1,r).
\end{align*}
\end{prop}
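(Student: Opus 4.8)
The plan is to read the $k$ layers of a weighted partition as a flag of set partitions and then count such flags by the iterated Stirling transform that defines $T(n,k,r)$. For $1\le j\le k$ I would let $\tau_j$ denote the set partition of $[n]$ recorded by the $(k-j+1)$-th layer, with its suppressed singletons restored; thus $\tau_1$ (the deepest, $k$-th layer) is the finest partition, while $\tau_k$ (the first layer, whose $r$ blocks are all nonempty) is the coarsest and has exactly $r$ blocks. Because each layer refines the previous one, this produces a chain $\tau_1\le\tau_2\le\cdots\le\tau_k$ in the refinement order. The delicate point is condition ($\star$): I would argue that it is only the convention that a part of size one created by a refinement is dropped from the notation, not that it is forbidden. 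The singletons dropped in passing to a finer layer are exactly the elements of a block lying in no recorded (size $\ge 2$) part one layer deeper, hence they are recoverable; so $\pi^{(k)}\mapsto(\tau_1,\ldots,\tau_k)$ reconstructs the chain uniquely and is a bijection onto the set of such flags. Establishing this bijection, and in particular checking that the singleton-suppression in ($\star$) loses no information, is the main obstacle; the enumeration afterwards is routine.

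Next I would enumerate the flags by their block counts. Writing $i_j$ for the number of blocks of $\tau_j$, the inequalities $n\ge i_1\ge i_2\ge\cdots\ge i_{k-1}\ge i_k=r$ say precisely that $\mathbf{i}=(i_1,\ldots,i_{k-1})\in I(n,k,r)$. A flag with prescribed block counts is assembled level by level: partition $[n]$ into the $i_1$ blocks of the finest $\tau_1$ in $S(n,i_1)$ ways, and then, for $j=2,\ldots,k$, obtain $\tau_j\ge\tau_{j-1}$ by grouping the $i_{j-1}$ blocks of $\tau_{j-1}$ into $i_j$ groups in $S(i_{j-1},i_j)$ ways. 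Hence the number of flags with a fixed $\mathbf{i}$ is $\prod_{j=1}^{k}S(i_{j-1},i_j)$ with $i_0=n$ and $i_k=r$, and summing over $\mathbf{i}\in I(n,k,r)$ gives
\begin{align*}
|\mathcal{P}_n^{(k)}(r)|=\sum_{\mathbf{i}\in I(n,k,r)}\prod_{j=1}^{k}S(i_{j-1},i_j)=T(n,k,r).
\end{align*}
Equivalently, one can argue by induction on $k$: removing the finest partition $\tau_1$ (the $k$-th layer) and collapsing its $i$ blocks to single points turns $\pi^{(k)}$ into a $(k-1)$-layer weighted partition on $i$ points with $r$ coarsest blocks. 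This yields $|\mathcal{P}_n^{(k)}(r)|=\sum_{i} S(n,i)\,|\mathcal{P}_i^{(k-1)}(r)|=\sum_i S(n,i)\,T(i,k-1,r)$, which is exactly the Stirling-transform recurrence for $T$ underlying Lemma \ref{lemma:Tnkr}, the base case $k=1$ being $|\mathcal{P}_n^{(1)}(r)|=S(n,r)=T(n,1,r)$.

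Finally, the case $r=1$ follows directly. Since $S(m,1)=1$ for every $m\ge1$, the innermost factor $S(i_{k-1},1)$ in $T(n,k,1)$ is trivial, so with $r'=i_{k-1}$ ranging over $1,\ldots,n$,
\begin{align*}
T(n,k,1)=\sum_{r'=1}^{n}\ \sum_{n\ge i_1\ge\cdots\ge i_{k-2}\ge r'}\ \prod_{j=1}^{k-1}S(i_{j-1},i_j)=\sum_{r'=1}^{n}T(n,k-1,r').
\end{align*}
Combinatorially, a weighted partition with a single first-layer block has $\tau_k=\{[n]\}$ forced, so its content is precisely the $(k-1)$-layer flag $\tau_1\le\cdots\le\tau_{k-1}$, whose coarsest partition $\tau_{k-1}$ may have any number $r'$ of blocks; summing over $r'$ yields $|\mathcal{P}_n^{(k)}(r=1)|=\sum_{r'=1}^n T(n,k-1,r')$.
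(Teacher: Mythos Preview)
Your proposal is correct. The flag interpretation of condition ($\star$) is right: the suppressed singletons in layers $\ge 2$ are recoverable from the next coarser layer, so a weighted partition is exactly a chain $\tau_1\le\cdots\le\tau_k$ of set partitions of $[n]$ with $|\tau_k|=r$, and the product $\prod_j S(i_{j-1},i_j)$ counts these flags directly.

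Your argument differs from the paper's in which end of the flag you peel off. The paper's induction keeps the ground set $[n]$ fixed and removes the \emph{first} (coarsest) layer: it builds $\pi^{(k)}$ from a pair $(\pi^{(k-1)},\pi')$ with $\pi^{(k-1)}\in\mathcal{P}_n^{(k-1)}$ having $n_1$ first-layer blocks and $\pi'\in\mathcal{P}_{n_1}^{(1)}(r)$, by shifting all layer indices of $\pi^{(k-1)}$ up by one and then merging its first-layer blocks according to $\pi'$. This yields the recurrence $T(n,k,r)=\sum_{n_1}T(n,k-1,n_1)\,S(n_1,r)$, stripping the last Stirling factor. Your inductive variant instead removes the \emph{finest} ($k$-th) layer and collapses its $i$ blocks to points, shrinking the ground set and giving $T(n,k,r)=\sum_i S(n,i)\,T(i,k-1,r)$, stripping the first Stirling factor. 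Both decompositions are valid and are just the two natural ways to truncate the product defining $T(n,k,r)$; your direct flag count sidesteps the induction altogether, which is slightly cleaner, while the paper's version has the advantage that it stays on $[n]$ throughout and matches the circular pictures used elsewhere. For the $r=1$ statement the two arguments coincide: dropping the forced first layer $\tau_k=\{[n]\}$ is exactly what the paper does.
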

\begin{proof}
Recall that the Stirling number $S(n,n_1)$ of the second kind counts the number
to partition $[n]$ into $n_1$ non-empty sets.
To specify a weighted partition $\pi$, we first partition $[n]$ into non-empty sets
$B_{i}^{(1)}$, $1\le i\le n_1$, which corresponds to the first layer of $\pi$.
We label the edges in the circular presentation by $1$. 

We give a recursive construction of a weighted partition $\pi^{(k)}\in\mathcal{P}_{n}^{(k)}$
from a pair of weighted partitions $(\pi^{(k-1)},\pi')$ where $\pi^{(k-1)}\in\mathcal{P}_{n}^{(k-1)}$ and 
$\pi'\in\mathcal{P}_{m}^{(1)}$ where $m$ is the number of blocks in the first layer of $\pi^{(k-1)}$.
Suppose that we have a circular presentation of $\pi^{(k-1)}\in\mathcal{P}_{n}^{(k-1)}$ where 
$\pi^{(k-1)}$ has $n_1$ blocks in the first layer. 
We first increase the labels of $\pi^{(k-1)}$ by one, and denote its circular presentation by $\nu^{(2,k)}$.
By definition of $T(n,k,r)$, we have $T(n,k,r)=\sum_{n_1}T(n,k-1,n_1)S(n_1,r)$.
Let $\pi'\in\mathcal{P}_{n_1}^{(1)}$ be a partition of $[n_1]$ with $r$ blocks.
The total number of partitions $\pi'$ corresponds to $S(n_1,r)$.
Let $B_{i}^{(1)}$, $1\le i\le n_1$, be the blocks in the first layer of $\pi^{(k-1)}$ and 
suppose they satisfy that $\min(B_{i}^{(1)})<\min(B_{i+1}^{(1)})$ for all $1\le i\le n_1-1$.
Recall $\pi'$ is a partition of $n_1$ with $r$ blocks.
If integers $i_1,i_2,\ldots, i_{p}$ form a block in $\pi'$, we define 
a block $B':=B^{(1)}_{i_1}\cup B^{(1)}_{i_2}\cup\ldots\cup B^{(1)}_{i_{p}}$.
Note that the set of blocks $B'$ gives a partition of $[n]$. 
In this way, one can construct a partition $\nu^{(1)}$ of $[n]$ with $r$ blocks $B'$.
Then, we define a weighted partition $\pi^{(k)}$ such that 
its first layer is $\nu^{(1)}$ and from the second to the $k$-th layers are $\nu^{(2,k)}$.
This construction implies that we have a bijection between a partition $\pi^{(k)}$ and 
a pair $(\pi^{(k-1)},\pi')$.
Therefore, the number of weighted partitions in $\mathcal{P}_{n}^{(k)}$ is 
given by the sum of the products of the Stirling number of the second kind,
namely, $T(n,k,r)$.

The number of weighted partitions $\pi^{(k)}$ with $r=1$ in $\mathcal{P}_{n}^{(k)}$ 
is equal to the number of weighted partitions $\pi^{(k-1)}$ in $\mathcal{P}_{n}^{(k-1)}$
since if we ignore the first layer of the partitions $\pi^{(k)}$, we have a natural 
bijection between $\pi^{(k)}$ and $\pi^{(k-1)}$.
Then, it follows that the total number $T(n,k,r=1)$ of such $\pi^{(k)}$ is equal to 
$\sum_{r=1}^{n}T(n,k-1,r)$.
\end{proof}

\begin{example}
\label{ex:Tdiagram}
An example of construction of a weighted partition $\pi^{(k)}\in\mathcal{P}_{n}^{(k)}$ 
from a pair of weighted partitions $(\pi^{(k-1)},\pi')\in\mathcal{P}_{n}^{(k)}\times \mathcal{P}_{m}^{(1)}$.
Let $\pi^{(2)}\in\mathcal{P}_{6}^{(2)}$ and $\pi' \in\mathcal{P}_{3}^{(1)}$ 
be the following weighted partitions:
\begin{align*}
\pi^{(2)}=
\tikzpic{-0.5}{
\draw circle(1cm);
\foreach \a in {90,30,-30,-90,-150,150}
\filldraw[black](\a:1cm)circle(1.5pt);
\draw (90:1cm) node[anchor=south] {$1$};
\draw (30:1cm) node[anchor=south west]{$2$};
\draw (-30:1cm)node[anchor=north west]{$3$};
\draw (-90:1cm)node[anchor=north]{$4$};
\draw (-150:1cm)node[anchor=north east]{$5$};
\draw (150:1cm)node[anchor=south east]{$6$};
\draw(90:1cm)--(30:1cm)(90:1cm)--(-30:1cm);
\draw[blue](30:1cm)--(-30:1cm)(-90:1cm)--(150:1cm);
}, \qquad
\pi^{'}
=
\tikzpic{-0.5}{
\draw circle(0.8cm);
\foreach \a in {90,-30,-150}
\filldraw[black](\a:0.8cm)circle(1.5pt);
\draw (90:0.8cm) node[anchor=south] {$1$};
\draw (-30:0.8cm) node[anchor=north west] {$2$};
\draw (-150:0.8cm)node[anchor=north east] {$3$};
\draw (-30:0.8cm)--(-150:0.8cm);
},
\end{align*} 
where blue lines correspond to the second layer.
The one line notation of $\pi^{(2)}$ is $1(23)^2/(46)^{2}/5$, and 
the minimum elements in the three blocks are $1,4$ and $5$.
From $\pi'$, we merge the blocks containing $4$ and $5$ into a block, and 
obtain a block $(46)^35$.
Thus, a weighted partition in $\mathcal{P}_{6}^{(3)}$ corresponding to 
the pair $(\pi^{(2)},\pi')$ is given by 
\begin{align*}
\pi^{(3)}=
\tikzpic{-0.5}{
\draw circle(1cm);
\foreach \a in {90,30,-30,-90,-150,150}
\filldraw[black](\a:1cm)circle(1.5pt);
\draw (90:1cm) node[anchor=south] {$1$};
\draw (30:1cm) node[anchor=south west]{$2$};
\draw (-30:1cm)node[anchor=north west]{$3$};
\draw (-90:1cm)node[anchor=north]{$4$};
\draw (-150:1cm)node[anchor=north east]{$5$};
\draw (150:1cm)node[anchor=south east]{$6$};
\draw[blue](90:1cm)--(30:1cm)(90:1cm)--(-30:1cm);
\draw[red](30:1cm)--(-30:1cm)(-90:1cm)--(150:1cm);
\draw (-90:1cm)--(-150:1cm)--(150:1cm);	
},
\end{align*}
where a red (resp. blue) line corresponds to the third (resp. second) layer.
The one-line notation is $\pi^{(3)}=1^2(23)^3/(46)^35$.
\end{example}

\subsection{Recurrence relations}
To obtain a recurrence relations for the numbers $T(n,k,r)$ and $t(n,k,r)$,
we introduce some notions regarding to a partition.

Let $\lambda:=(\lambda_1,\lambda_2,\ldots,\lambda_{l})$ be 
a partition of $n$, that is, $\sum_{i=1}^{l}\lambda_{i}=n$, 
$\lambda_1\ge\lambda_2\ge\ldots\ge\lambda_{l}>0$, and 
$l:=l(\lambda)$ is the length of $\lambda$.
We abbreviate $\lambda\vdash n$ if $\lambda$ is a partition of $n$.
Given a partition $\lambda\vdash n$, we define 
\begin{align*}
m_{i}:=\#\{j: \lambda_{j}=i, 1\le j\le l(\lambda)\},
\end{align*}
and we write $\lambda=(1^{m_1},2^{m_{2}},\ldots,n^{m_{n}})$.
Then, we define the two values $f_{\lambda}$ and $g_{\lambda}$ by
\begin{align*}
f_{\lambda}&:=\genfrac{}{}{}{}{n!}{\prod_{j=1}^{n}(j!)^{m_{j}} m_{j}!}, \\
g_{\lambda}&:=(l(\lambda)-1)!\cdot f_{\lambda},
\end{align*}
where $l(\lambda)$ is the length of $\lambda$.
For example, in the case of $\lambda=(2,2,1)\vdash 5$, 
we have $l(\lambda)=3$, $f_{(2,2,1)}=5!/((2!)^2\cdot 2!)=15$, and $g_{(2,2,1)}=30$.

For general $k\ge 0$, we define the elementary symmetric function 
\begin{align*}
e_{k}(x_1,\ldots,x_n):=
\sum_{1\le j_1<j_2<\ldots<j_{k}\le n}x_{j_1}\cdots x_{j_k}.
\end{align*}

We introduce diagrams which are associated to the numbers $t(n,k,r)$.

Recall that the Stirling numbers $s(n,k)$ of the first kind count
the number of permutations with $k$ disjoint cycles.
Let $S_1$ be a circle with $n$ points labeled $1,2,\ldots,n$ clock-wise.
The number $s(n,r)=t(n,k=1,r)$ counts the following combinatorial objects $C^{(1)}$ on $S_{1}$:
\begin{enumerate}
\item $C$ consists of $n-r$ directed lines from the point $i$ to the point $j$ such that $i<j$.
The direction of the line is from $i$ to $j$.
\item At each point $i\in[n]$, there is at most one incoming lines.
\item Every directed line has an integer label $1$.
\end{enumerate}
We say that the points $i_1,i_2,\ldots,i_{m}$ are connected  
if any two points $i_{p}$ and $i_q$ are connected by several lines.
We define the size of a connected component by $m$.
Further, we also say the points $i_1,\ldots, i_{m}$ are in the same block 
if they are connected.
The size of a block is given by $m$.
For example, consider the two diagrams for $(n,r)=(4,2)$: 
\begin{align*}
\tikzpic{-0.5}{
\draw circle(0.8cm);
\foreach \a in {90,0,-90,-180}
\filldraw[black](\a:0.8cm)circle(1.5pt);
\draw (90:0.8cm) node[anchor=south] {$1$};
\draw (0:0.8cm) node[anchor=west] {$2$};
\draw (-90:0.8cm)node[anchor=north] {$3$};
\draw (-180:0.8cm)node[anchor=east]{$4$};
\draw (90:0.8cm)--(-90:0.8cm)(90:0.8cm)--(0:0.8cm);
}\qquad 
\tikzpic{-0.5}{
\draw circle(0.8cm);
\foreach \a in {90,0,-90,-180}
\filldraw[black](\a:0.8cm)circle(1.5pt);
\draw (90:0.8cm) node[anchor=south] {$1$};
\draw (0:0.8cm) node[anchor=west] {$2$};
\draw (-90:0.8cm)node[anchor=north] {$3$};
\draw (-180:0.8cm)node[anchor=east]{$4$};
\draw (90:0.8cm)--(0:0.8cm)(-90:0.8cm)--(-180:0.8cm);
}
\end{align*}
The both diagrams consist of two blocks.
In the left diagram, we have one block with size three and a block of size one consisting of 
a single point. In the right diagram, we have two blocks of size two.

\begin{defn}
\label{defn:Cnr}
We denote by $\mathcal{C}(n,r)$ the set of diagrams $C^{(1)}$ with $n$ points and 
$r$ blocks, which satisfy the conditions from (1) to (3). 
\end{defn}
By definition, the cardinality of $\mathcal{C}(n,r)$ is given by the 
(signless) Stirling number $|s(n,r)|$ of the first kind.
Each connected component of $C\in\mathcal{C}(n,r)$ corresponds to 
a single disjoint circle.

Diagrams $C^{(k)}$ on $S_1$ for $t(n,k,r)$ are recursively constructed from 
a pair $(C^{(k-1)},C^{(1)})$ where the diagrams $C^{(k-1)}$ on $S_1$ for $t(n,k-1,r')$ 
where $r\le r'\le n$ and $C^{(1)}\in\mathcal{C}(r',r)$.
From the definition $t(n,k,r)=\sum_{r'}t(n,k-1,r')s(r',r)$, we first increase the integer labels on 
directed lines in $C^{(k-1)}$ by one.
Note that $C^{(k-1)}$ has $r'$ blocks.
Then, we add $r'-r$ directed lines with an integer label $1$ as follows:
\begin{enumerate}[($\heartsuit1$)]
\item Fix a diagram $C_1$ in $\mathcal{C}(r',r)$.
\item Since the diagram $C^{(k-1)}$ has $r'$ blocks, we denote the blocks by 
$B_{i}$, $1\le i\le r'$.  Let $b_{i}$ be the minimum integer label in $B_{i}$.
\item If the points $i$ and $j$ are connected by a directed line in $C_{1}$, 
we connect $b_{i}$ and $b_{j}$ in $C^{(k-1)}$ by a directed line.
The newly added line has an integer label $1$. 
\end{enumerate}
As a consequence, a diagram $C^{(k)}$ is constructed from 
the pair of diagrams $(C^{(k-1)},C^{(1)})$.
Here, if the number of blocks in $C^{(k-1)}$ is $r$, then 
$C^{(1)}$ has $r$ points in its diagram.

\begin{example}
\label{ex:tdiagram}
Let $C^{(2)}\in\mathcal{C}(6,3)$ and $C_{1}\in\mathcal{C}(3,2)$ be the following diagrams:
\begin{align*}
C^{(2)}=
\tikzpic{-0.5}{
\draw circle(1cm);
\foreach \a in {90,30,-30,-90,-150,150}
\filldraw[black](\a:1cm)circle(1.5pt);
\draw (90:1cm) node[anchor=south] {$1$};
\draw (30:1cm) node[anchor=south west]{$2$};
\draw (-30:1cm)node[anchor=north west]{$3$};
\draw (-90:1cm)node[anchor=north]{$4$};
\draw (-150:1cm)node[anchor=north east]{$5$};
\draw (150:1cm)node[anchor=south east]{$6$};
\draw(90:1cm)--(30:1cm);
\draw[blue](30:1cm)--(-30:1cm)(-90:1cm)--(150:1cm);
}, \qquad
C_{1}
=
\tikzpic{-0.5}{
\draw circle(0.8cm);
\foreach \a in {90,-30,-150}
\filldraw[black](\a:0.8cm)circle(1.5pt);
\draw (90:0.8cm) node[anchor=south] {$1$};
\draw (-30:0.8cm) node[anchor=north west] {$2$};
\draw (-150:0.8cm)node[anchor=north east] {$3$};
\draw (-30:0.8cm)--(-150:0.8cm);
},
\end{align*} 
where black lines have an integer label $1$ and blue lines have an integer label $2$.
The minimum labels in the blocks of $C^{(2)}$ are $1,4$ and $5$.
Since the points $2$ and $3$ are connected by a line in $C_{1}$, we connect the points $4$ and $5$
in $C^{(3)}$ by a line.
Then, we have a following diagram $C^{(3)}\in\mathcal{C}(6,2)$:
\begin{align*}
C^{(3)}=
\tikzpic{-0.5}{
\draw circle(1cm);
\foreach \a in {90,30,-30,-90,-150,150}
\filldraw[black](\a:1cm)circle(1.5pt);
\draw (90:1cm) node[anchor=south] {$1$};
\draw (30:1cm) node[anchor=south west]{$2$};
\draw (-30:1cm)node[anchor=north west]{$3$};
\draw (-90:1cm)node[anchor=north]{$4$};
\draw (-150:1cm)node[anchor=north east]{$5$};
\draw (150:1cm)node[anchor=south east]{$6$};
\draw[blue](90:1cm)--(30:1cm);
\draw[red](30:1cm)--(-30:1cm)(-90:1cm)--(150:1cm);
\draw(-90:1cm)--(-150:1cm);
},
\end{align*}
where a red line has an integer label $3$.
\end{example}

Let $C\in\mathcal{C}(n,r)$ be a diagram with $n-r$ lines.
Since all $n-r$ lines are directed edges, we have a sequence of points
\begin{align*}
i_1\rightarrow i_2\rightarrow\ldots \rightarrow i_{m},
\end{align*}
where $i_j\rightarrow i_{j+1}$ means that we have a directed edge 
from the point $i_j$ to the point $i_{j+1}$.
We may have several sequences of points starting from the point $i_j$
for some $j$.
Let $c\in[k]$. We put an integer label on the edge $e$ connecting 	
$i_j$ and $i_{j+1}$. Let $c(i_j,i_{j+1})$ be the label on the edge $e$.
\begin{lemma}
\label{lemma:condcij}
The label $c(i_j,i_{j+1})$ satisfies the following conditions:
\begin{enumerate}
\item $1\le c(i_j,i_{j+1})\le k$. 
\item $c(i_j,i_{j+1})\le c(i_{j+1},i_{j+2})$ for all $j$.
\end{enumerate}
\end{lemma}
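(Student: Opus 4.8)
The plan is to treat the two conditions separately: (1) is immediate from the construction, while (2) is proved by induction on $k$, exploiting the recursive passage from a pair $(C^{(k-1)},C_1)$ to $C^{(k)}$. Condition (1) holds because each label is introduced with value $1$ and is increased by exactly one every time we pass from $C^{(j-1)}$ to $C^{(j)}$; hence after $k$ layers every label lies in $\{1,2,\ldots,k\}$.

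Before the induction I would isolate the decisive structural fact: \emph{in any intermediate diagram $C^{(j)}$, the minimum point $b$ of a block has no incoming line.} Indeed, by condition (1) of Definition \ref{defn:Cnr} every directed line runs from a smaller point to a larger one, so an incoming line at $b$ would issue from a point of the same block strictly smaller than $b$, contradicting the minimality of $b$. A companion invariant, maintained at every step, is that each point carries at most one incoming line; since the newly added lines always have their head at a block-minimum (which previously had no incoming line), the construction never creates a second incoming line anywhere. In particular the label $c(i_j,i_{j+1})$ of the unique line entering $i_{j+1}$ is well defined.

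Now for the induction. The base case $k=1$ is trivial, all labels being $1$. For the inductive step, recall that $C^{(k)}$ is obtained from $C^{(k-1)}$ by (i) increasing every existing label by one, and then (ii) adding label-$1$ lines $b_i\to b_j$ joining block-minima of $C^{(k-1)}$ as prescribed by $(\heartsuit 1)$–$(\heartsuit 3)$. Fix an interior point $i_{j+1}$ of a directed path. If both the line entering and the line leaving $i_{j+1}$ are old, then $c(i_j,i_{j+1})\le c(i_{j+1},i_{j+2})$ holds by the induction hypothesis, since step (i) adds one to both labels and preserves the inequality. Otherwise $i_{j+1}$ is incident to a new line; but new lines join only block-minima of $C^{(k-1)}$, so $i_{j+1}$ is such a minimum and, by the key fact, carries no incoming old line. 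Hence its incoming line is necessarily new, of label $1$, and $1\le c(i_{j+1},i_{j+2})$ trivially. This dichotomy establishes (2).

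The main obstacle I expect is not the induction itself but the bookkeeping underlying the key fact: one must verify that the invariant ``every block-minimum has no incoming line, and every point has at most one incoming line'' is genuinely inherited at each application of $(\heartsuit 1)$–$(\heartsuit 3)$, i.e. that a new line is always attached with its head at a current block-minimum and never produces a second incoming line. Once this invariant is in place, the monotonicity along a directed path follows from the observation that a point can emit (or receive) a label-$1$ line only while it is a current block-minimum, and such a point never carries an older, higher-labelled incoming line.
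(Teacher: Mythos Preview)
Your proof is correct and rests on the same key observation as the paper's: a new line added in the passage from $C^{(k-1)}$ to $C^{(k)}$ is incident only to block-minima of $C^{(k-1)}$, and a block-minimum carries no incoming line. The paper packages this as a direct contradiction (if $l_1>l_2$, then at the stage where the edge $p_2\to p_3$ is created the point $p_2$ already lies in a block with the smaller element $p_1$, so $p_2$ is not a block-minimum and cannot receive or emit a new line), whereas you run it as an induction on $k$ with explicitly maintained invariants; the arguments are equivalent.
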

\begin{proof}
By condition ($\heartsuit3$), the label of a line in the diagram 
is in $\{1,2,\ldots,k\}$, which implies the condition (1).

Proof of (2). Suppose that we have three points $p_1,p_2$ and $p_3$ in $C^{(k)}$ 
such that $p_1<p_2<p_3$ and a line connecting $p_1$ with $p_2$ 
has a label $l_1$ and a line connecting $p_2$ with $p_3$ has 
a label $l_2$.
We will show that $l_1\le l_2$.
We assume that $l_1>l_2$. 
From the assumption and the condition ($\heartsuit3$), 
there exists $k'<k$ such that the two points $p_1$ and $p_2$ are connected 
by a line in $C^{(k')}$ and two points $p_2$ and $p_3$ are not connected by a line.
In $C^{(k')}$, the minimum label of the block containing two points $p_1$ and $p_2$
is less than or equal to $p_1$.
Thus, from the condition ($\heartsuit2$), we cannot connect the points $p_2$ and 
$p_3$ by a line in $C^{(k)}$. However, this is a contradiction since $p_2$ and $p_3$ are connected 
in $C^{(k)}$.
Thus, we have $l_1\le l_2$.
\end{proof}

\begin{defn}
We call a labeling of the diagram $C$ satisfying the above two 
conditions in Lemma \ref{lemma:condcij} a $k$-labeling.
We denote the number of $k$-labels of a diagram $C$ 
by $\mathrm{wt}^{(k)}(C)$.
\end{defn}

For example, consider the two diagrams in $\mathcal{C}(4,1)$:
\begin{align*}
C_{1}=
\tikzpic{-0.5}{
\draw circle(0.8cm);
\foreach \a in {90,0,-90,-180}
\filldraw[black](\a:0.8cm)circle(1.5pt);
\draw (90:0.8cm) node[anchor=south] {$1$};
\draw (0:0.8cm) node[anchor=west] {$2$};
\draw (-90:0.8cm)node[anchor=north] {$3$};
\draw (-180:0.8cm)node[anchor=east]{$4$};
\draw (90:0.8cm)--(0:0.8cm)(0:0.8cm)--(180:0.8cm)(0:0.8cm)--(-90:0.8cm);
},\qquad 
C_{2}=\tikzpic{-0.5}{
\draw circle(0.8cm);
\foreach \a in {90,0,-90,-180}
\filldraw[black](\a:0.8cm)circle(1.5pt);
\draw (90:0.8cm) node[anchor=south] {$1$};
\draw (0:0.8cm) node[anchor=west] {$2$};
\draw (-90:0.8cm)node[anchor=north] {$3$};
\draw (-180:0.8cm)node[anchor=east]{$4$};
\draw (90:0.8cm)--(0:0.8cm)--(-90:0.8cm)--(-180:0.8cm);
}.
\end{align*}
We have two sequences of points in $C_{1}$:
\begin{align*}
1\rightarrow 2 \rightarrow 3, \qquad 
1\rightarrow 2 \rightarrow 4.
\end{align*}
Similarly, we have a unique sequence of points in $C_{2}$:
\begin{align*}
1\rightarrow 2\rightarrow 3\rightarrow 4.
\end{align*}
When $k=3$, we have $\mathrm{wt}^{(3)}(C_1)=14$ and 
$\mathrm{wt}^{(3)}(C_{2})=10$.
The value $\mathrm{wt}^{(3)}(C_1)=14$ comes from that 
we have nine $3$-labelings with $c(1,2)=1$, 
four labelings with $c(1,2)=2$, and one labeling with $c(1,2)=3$.
Similarly, the $3$-labels of the diagram $C_2$ are  
six labels with $c(1,2)=1$, three labels with $c(1,2)=2$ and 
one label with $c(1,2)=3$.

The following proposition shows that the numbers $t(n,k,r)$
are the generating function of diagrams $C\in\mathcal{C}(n,r)$ 
with a weight $\mathrm{wt}(C)$.
\begin{prop}
The numbers $t(n,k,r)$ satisfy
\begin{align}
\label{eq:tnkrC}
t(n,k,r)=
(-1)^{n+r}\sum_{C\in\mathcal{C}(n,r)}\mathrm{wt}^{(k)}(C),
\end{align}
\end{prop}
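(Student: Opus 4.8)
The plan is to argue by induction on $k$, reducing the weighted count of diagrams to the recursive construction of $C^{(k)}$ from a pair $(C^{(k-1)},C_1)$ described just before the proposition. The arithmetic backbone is the sign relation $|s(n,r)|=(-1)^{n+r}s(n,r)$ together with the recurrence $t(n,k,r)=\sum_{r'}t(n,k-1,r')\,s(r',r)$ obtained by peeling the last factor $s(i_{k-1},r)$ off the defining sum of $t(n,k,r)$ (setting $r'=i_{k-1}$).

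For the base case $k=1$, a $1$-labeling must assign the label $1$ to every edge, so each diagram $C\in\mathcal{C}(n,r)$ admits exactly one $1$-labeling and $\mathrm{wt}^{(1)}(C)=1$. Hence $\sum_{C\in\mathcal{C}(n,r)}\mathrm{wt}^{(1)}(C)=|\mathcal{C}(n,r)|=|s(n,r)|=(-1)^{n+r}s(n,r)=(-1)^{n+r}t(n,1,r)$, which is (\ref{eq:tnkrC}) for $k=1$.

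For the inductive step I would set up a bijection between the set of $k$-labeled diagrams in $\mathcal{C}(n,r)$ (pairs of a diagram $C$ with a $k$-labeling, which is exactly what $\sum_C\mathrm{wt}^{(k)}(C)$ enumerates) and the disjoint union over $r\le r'\le n$ of $\{(k-1)\text{-labeled diagrams in }\mathcal{C}(n,r')\}\times\mathcal{C}(r',r)$. The forward map deletes every edge carrying the label $1$ and decreases all remaining labels by one. By Lemma \ref{lemma:condcij}(2) the labels are non-decreasing along each directed path, so the label-$1$ edges occupy precisely the positions nearest the block minima; the deletion therefore yields a diagram $C'$ whose surviving labels lie in $\{1,\dots,k-1\}$ and remain non-decreasing, i.e.\ a $(k-1)$-labeling of a diagram in $\mathcal{C}(n,r')$ with $r'=r+\#\{\text{deleted edges}\}$. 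The inverse map is precisely the construction ($\heartsuit1$)--($\heartsuit3$).

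The key point to check is that the deleted label-$1$ edges connect exactly the block minima of $C'$: if $b_i\rightarrow b_j$ carries label $1$, then $b_j$ has no other incoming edge (by condition (2) of $\mathcal{C}(n,r)$) and the incoming edge of $b_i$, if any, also carries label $1$ by the non-decreasing condition, so both endpoints lose their incoming edges in $C'$ and become minima of their $C'$-blocks; conversely a point with no incoming label-$\ge2$ edge is a $C'$-block minimum. Thus the label-$1$ edges form a diagram on the $r'$ block minima which, after relabeling them $1,\dots,r'$ in increasing order, is an element $C_1\in\mathcal{C}(r',r)$, the number of blocks of $C$ being $r$. This bijection gives the identity $\sum_{C\in\mathcal{C}(n,r)}\mathrm{wt}^{(k)}(C)=\sum_{r'}\bigl(\sum_{C'\in\mathcal{C}(n,r')}\mathrm{wt}^{(k-1)}(C')\bigr)|\mathcal{C}(r',r)|$. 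Combining it with the inductive hypothesis $\sum_{C'}\mathrm{wt}^{(k-1)}(C')=(-1)^{n+r'}t(n,k-1,r')$, with $|\mathcal{C}(r',r)|=|s(r',r)|=(-1)^{r'+r}s(r',r)$, and with the recurrence above, the signs combine as $(-1)^{n+r'}(-1)^{r'+r}=(-1)^{n+r}$, yielding $\sum_C\mathrm{wt}^{(k)}(C)=(-1)^{n+r}t(n,k,r)$, which is (\ref{eq:tnkrC}). I expect the main obstacle to be the careful verification that deleting the label-$1$ edges recovers exactly the block-minima structure matching some $C_1\in\mathcal{C}(r',r)$, and that this deletion/insertion is genuinely inverse to ($\heartsuit1$)--($\heartsuit3$); everything else is the bookkeeping of signs.
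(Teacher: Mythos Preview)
Your proof is correct and takes essentially the same approach as the paper: both identify $|t(n,k,r)|$ with the number of $k$-labeled diagrams via the recursive construction $(\heartsuit1)$--$(\heartsuit3)$, with Lemma~\ref{lemma:condcij} supplying the forward direction. Your inductive argument makes the inverse map (deleting label-$1$ edges and recovering the block-minima diagram $C_1$) explicit and verifies it more carefully than the paper's terse proof, which simply asserts that the $C^{(k)}$ with a given underlying $C$ are exactly the $k$-labelings.
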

\begin{proof}
If we forget the labels of lines in any diagram $C^{(k)}$, 
we have a diagram $C$ in $\mathcal{C}(n,r)$.
Therefore, to obtain the number $t(n,k,r)$, it is enough 
to count the diagrams $C^{(k)}$ which have the same connectivity 
as $C$.
Lemma \ref{lemma:condcij} implies that the number of such $C^{(k)}$
is the number of $k$-labeling of $C$.
Then, it is easy to see that $t(n,k,r)$ satisfies Eq. (\ref{eq:tnkrC}),
which completes the proof.
\end{proof}

\begin{example}
We have $t(3,3,1)=15$. The diagram for $t(3,3,1)$ consists of a single block 
with $3$ elements.
We have two diagrams whose sequences of points are given by 
\begin{align*}
3\leftarrow 1\rightarrow 2, \qquad 1\rightarrow 2\rightarrow 3.
\end{align*}
In the left sequence, we have $9$ $3$-labelings.
We have $6$ $3$-labelings for the right sequence. In total, we have $15$ $3$-labelings 
with a single block. 
\end{example}

\begin{remark}
We have two circular presentations (see Example \ref{ex:Tdiagram} and 
Example \ref{ex:tdiagram}) for $T(n,k,r)$ and $t(n,k,r)$ respectively.
The difference between two presentations is that we may have a loop 
consisting of several edges for the former, and no loops for the latter.
Further, in the latter case, there exists a unique sequence of edges 
from the integer $j>i$ to $i$ where $i$ is the minimum element of a connected 
component containing $i$ and $j$.
\end{remark}

Below, we give two more recurrence relations for the numbers 
$t(n,k,r)$.

\begin{prop}
\label{prop:rec1tnkr}
The numbers $t(n,k,r)$ satisfies the recurrence relation:
\begin{align}
\label{eq:rec2tnkr}
t(n,k,r)=\sum_{p=0}^{n-r}\genfrac{(}{)}{0pt}{}{n-1}{p}\cdot
t(p+1,k,1)\cdot t(n-p-1,k,r-1),
\end{align}
and 
\begin{align}
\label{eq:tnk1}
t(n,k,1)=\sum_{\lambda\vdash n}(-1)^{l(\lambda)+1}g_{\lambda}
\prod_{i=1}^{l(\lambda)}t(\lambda_i,k-1,1),
\end{align}
with the initial conditions $t(0,k,0)=t(1,k,1)=1$.
\end{prop}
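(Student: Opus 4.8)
The plan is to work entirely at the level of exponential generating functions, relying on Lemma \ref{lemma:Tnkr}, which gives $\sum_{n,r}t(n,k,r)x^{n}y^{r}/n!=\log^{(k)}(x,y)=(\log^{(k)}(x))^{y}$. Setting $\phi_{k}(x):=\log(\log^{(k)}(x))$ so that $(\log^{(k)}(x))^{y}=\exp(y\,\phi_{k}(x))$, and writing $f_{r}(x):=\sum_{n}t(n,k,r)x^{n}/n!$, the expansion of the exponential produces the clean identity
\begin{align*}
f_{r}(x)=\frac{\phi_{k}(x)^{r}}{r!}.
\end{align*}
In particular $f_{1}(x)=\phi_{k}(x)$ is the generating function of the numbers $t(n,k,1)$. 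An easy induction on $k$ gives $\log^{(k)}(0)=1$, hence $\phi_{k}(0)=\log 1=0$ and $\phi_{k}(x)=x+O(x^{2})$; this yields the stated initial conditions $t(0,k,0)=t(1,k,1)=1$ and guarantees that the formal power series manipulations below are well defined.

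For the first recurrence (\ref{eq:rec2tnkr}), I would simply differentiate $f_{r}=\phi_{k}^{r}/r!$ to get
\begin{align*}
f_{r}'(x)=\phi_{k}'(x)\,f_{r-1}(x).
\end{align*}
Since $\phi_{k}'(x)=\sum_{p\ge0}t(p+1,k,1)x^{p}/p!$ and $f_{r-1}(x)=\sum_{l}t(l,k,r-1)x^{l}/l!$, comparing the coefficient of $x^{n-1}/(n-1)!$ on the two sides (the left side being $t(n,k,1)$'s analogue $t(n,k,r)$) and applying the binomial convolution for products of exponential generating functions gives exactly (\ref{eq:rec2tnkr}); the summation range $0\le p\le n-r$ is forced because $t(n-1-p,k,r-1)$ vanishes for $p>n-r$. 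This step is routine once $f_{r}=\phi_{k}^{r}/r!$ is established.

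For the second recurrence (\ref{eq:tnk1}), the key input is the defining relation $\log^{(k)}(x)=1+\log(\log^{(k-1)}(x))$, which says precisely that $\log^{(k)}(x)=1+\phi_{k-1}(x)$. Taking a logarithm yields the master identity
\begin{align*}
\phi_{k}(x)=\log\bigl(1+\phi_{k-1}(x)\bigr).
\end{align*}
I would then expand $\log(1+u)=\sum_{l\ge1}\frac{(-1)^{l-1}}{l}u^{l}$ with $u=\phi_{k-1}(x)$ and extract $n![x^{n}]$ of $\phi_{k-1}(x)^{l}$. This coefficient is a sum over ordered compositions $(m_{1},\ldots,m_{l})$ of $n$ with positive parts, weighted by the multinomial coefficient $\binom{n}{m_{1},\ldots,m_{l}}$ and $\prod_{i}t(m_{i},k-1,1)$. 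Grouping the compositions by the underlying partition $\lambda=(1^{m_{1}},\ldots,n^{m_{n}})\vdash n$ with $l(\lambda)=l$, the number of compositions collapsing to $\lambda$ is $l!/\prod_{j}m_{j}!$ while the multinomial coefficient equals $n!/\prod_{j}(j!)^{m_{j}}$; their product, divided appropriately, is exactly $l!\,f_{\lambda}$. Summing over $l$ against the weight $(-1)^{l-1}/l$ turns $l!\,f_{\lambda}$ into $(l-1)!\,f_{\lambda}=g_{\lambda}$ and produces the sign $(-1)^{l(\lambda)+1}$, giving (\ref{eq:tnk1}).

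The main obstacle is this last bookkeeping step: I must align the count of ordered tuples collapsing to each partition, together with the multinomial factor, against the precise definitions $f_{\lambda}=n!/\bigl(\prod_{j}(j!)^{m_{j}}m_{j}!\bigr)$ and $g_{\lambda}=(l(\lambda)-1)!\,f_{\lambda}$; an off-by-one in the multiplicity counting or a misplaced factorial would corrupt the coefficient. Everything else reduces to the two generating-function identities $f_{r}=\phi_{k}^{r}/r!$ and $\phi_{k}=\log(1+\phi_{k-1})$.
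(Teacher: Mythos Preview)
Your argument is correct. For Eq.~(\ref{eq:tnk1}) your approach coincides with the paper's: both identify $\sum_{n\ge1}t(n,k,1)x^{n}/n!$ with $\log^{(k)}(x)-1$ (equivalently your $\phi_{k-1}$), use the recursion $\log^{(k)}(x)=1+\log(\log^{(k-1)}(x))$ to obtain $\phi_{k}=\log(1+\phi_{k-1})$, and expand the logarithm; your bookkeeping of compositions versus partitions is exactly the ``expanding the right hand side'' step the paper leaves implicit.

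For Eq.~(\ref{eq:rec2tnkr}) you take a genuinely different route. The paper argues combinatorially via the diagram model for $t(n,k,r)$ developed just above the proposition: in a diagram $C^{(k)}$ one isolates the block $B_{1}$ containing the point $1$, chooses its remaining $p$ points in $\binom{n-1}{p}$ ways, counts its internal structure by $t(p+1,k,1)$, and lets the leftover $n-p-1$ points form the other $r-1$ blocks in $t(n-p-1,k,r-1)$ ways. Your proof instead stays entirely within generating functions: from $f_{r}=\phi_{k}^{r}/r!$ a single differentiation gives $f_{r}'=\phi_{k}'\,f_{r-1}$, and the recurrence drops out by binomial convolution. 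Your method is shorter and needs only Lemma~\ref{lemma:Tnkr}, so it is independent of the diagram interpretation; the paper's argument, on the other hand, explains \emph{why} the recurrence has this shape and makes the role of the block containing $1$ transparent, which is the natural way one would discover the identity in the first place.
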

\begin{proof}
We first prove Eq. (\ref{eq:tnk1}).
Since $t(n,1,1)=s(n,1)$ and $s(n,1)$ counts the number of a cycle 
of size $n$ with a sign, we have $t(n,1,1)=(-1)^{n-1}(n-1)!$.
Thus, the exponential generating function of $s(n,1)$ is given 
by $1+\log(1+x)$.
By definition, we have $t(n,k,1)=\sum_{n'}s(n,n')t(n',k-1,1)$.
By applying the Stirling transform to $t(n,k,1)$, it is easy to see 
that 
\begin{align}
\label{eq:rectnk1}
\begin{split}
1+\sum_{n\ge1}t(n,k,1)\genfrac{}{}{}{}{x^n}{n!}&=\log^{(k)}(x), \\
&=1+\log\left(1+\sum_{n\ge 1}t(n,k-1,1)\genfrac{}{}{}{}{x^{n}}{n!}\right).
\end{split}
\end{align}
By expanding the right hand side of Eq. (\ref{eq:rectnk1}) with respect to $x$,
we obtain Eq. (\ref{eq:tnk1}).

Proof of Eq. (\ref{eq:rec2tnkr}).
By construction of diagrams associated to the numbers $t(n,k,r)$, a diagram $C^{(k)}$
consists of several blocks. 
Let $B_{1}$ be the block containing the point $1$ in $C^{(k)}$.
Suppose that $B_{1}$ has $p$ directed lines. It implies that we have $p+1$ points in 
this block.
Since we focus on $B_1$, 
we have $\genfrac{(}{)}{0pt}{}{n-1}{p}$ ways to choose $p$ points from $n-1$ points.
Then, we have $t(p+1,k,1)$ ways to form a cycle of size $p+1$.
Besides $B_{1}$, we have $n-p-1$ points and these points should form $r-1$ disjoint 
cycles, which implies we have the factor $t(n-p-1,k,r-1)$. 
From these observations, we have Eq. (\ref{eq:rec2tnkr}).
\end{proof}

The next proposition gives the recurrence relation for $t(n,k,r)$ in terms of elementary symmetric 
functions.
\begin{prop}
We have
\begin{align*}
t(n,k,r)=
\sum_{r\le a}(-1)^{a-r}e_{a-r}(1,2,\ldots,a-1)
\sum_{\genfrac{}{}{0pt}{}{\lambda\vdash n}{l(\lambda)=a}}
f_{\lambda}\prod_{j=1}^{l(\lambda)}t(\lambda_{j},k-1,1),
\end{align*}
where $e_{a-r}(1,2,\ldots,a-1)$ is a specialization of 
$e_{a-r}$ at $x_{i}=i$.
\end{prop}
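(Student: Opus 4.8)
The plan is to read the identity off the bivariate generating function of Lemma \ref{lemma:Tnkr} and then to recognize the unsigned Stirling number of the first kind as an elementary symmetric polynomial. The starting point is $\sum_{n,r}t(n,k,r)\,x^{n}y^{r}/n!=(\log^{(k)}(x))^{y}$ together with the relation $\log^{(k)}(x)=1+\log\log^{(k-1)}(x)=1+g(x)$, where $g(x):=\sum_{m\ge1}t(m,k-1,1)\,x^{m}/m!$; the second equality is just the observation that the $y^{1}$-coefficient of $(\log^{(k-1)}(x))^{y}$ is $\log\log^{(k-1)}(x)$, so it coincides with $g(x)$. Getting this index shift right (the right-hand side must live at level $k-1$, matching the $r=1$ case Eq.~(\ref{eq:tnk1})) is the one bookkeeping point I would be most careful about.

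With this in hand I would expand $(1+g(x))^{y}=\sum_{a\ge0}\binom{y}{a}g(x)^{a}$ and extract coefficients. The coefficient of $x^{n}/n!$ in $g(x)^{a}$ is computed by grouping the ordered compositions $(m_{1},\ldots,m_{a})$ of $n$ into positive parts by their partition type $\lambda$: for a fixed type $\lambda=(1^{\mu_{1}},2^{\mu_{2}},\ldots)$ all orderings give the same multinomial coefficient, and summing over the $a!/\prod_{j}\mu_{j}!$ orderings collapses to $a!\,f_{\lambda}$, so the coefficient equals $a!\sum_{\lambda\vdash n,\,l(\lambda)=a}f_{\lambda}\prod_{j}t(\lambda_{j},k-1,1)$. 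Combining with $\binom{y}{a}\,a!=(y)_{a}=\sum_{r}s(a,r)\,y^{r}$ and reading off the coefficient of $y^{r}$ gives
\[
t(n,k,r)=\sum_{a\ge r}s(a,r)\sum_{\substack{\lambda\vdash n\\ l(\lambda)=a}}f_{\lambda}\prod_{j=1}^{l(\lambda)}t(\lambda_{j},k-1,1).
\]
The same formula can be seen combinatorially: the recursion $t(n,k,r)=\sum_{a}t(n,k-1,a)\,s(a,r)$ obtained by peeling off the innermost Stirling factor, combined with the block-decomposition $t(n,k-1,a)=\sum_{\lambda,\,l(\lambda)=a}f_{\lambda}\prod_{j}t(\lambda_{j},k-1,1)$, namely that a diagram in $\mathcal{C}(n,a)$ splits into its $a$ connected single-block components, that the weight $\mathrm{wt}^{(k-1)}$ is multiplicative over components by Lemma \ref{lemma:condcij}, and that the signs of Eq.~(\ref{eq:tnkrC}) match since $(-1)^{n+a}=\prod_{j}(-1)^{\lambda_{j}+1}$.

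The final and conceptually key step is to rewrite $s(a,r)$ as an elementary symmetric polynomial. Expanding $x(x+1)\cdots(x+a-1)=\sum_{r}|s(a,r)|\,x^{r}$ and selecting $x$ from $r$ of the $a$ factors shows $|s(a,r)|=e_{a-r}(0,1,\ldots,a-1)=e_{a-r}(1,2,\ldots,a-1)$, hence $s(a,r)=(-1)^{a-r}e_{a-r}(1,2,\ldots,a-1)$; substituting this into the display produces the stated identity. I expect the main difficulty to be bookkeeping rather than conceptual: the content is entirely in Lemma \ref{lemma:Tnkr} plus the classical symmetric-function description of $s(a,r)$, and the only care needed is the index shift noted above and the passage from ordered compositions to the multiplicity factor $f_{\lambda}$. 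As a consistency check, at $r=1$ one has $e_{a-1}(1,\ldots,a-1)=(a-1)!$, so the formula reduces to Eq.~(\ref{eq:tnk1}) with $g_{\lambda}=(l(\lambda)-1)!\,f_{\lambda}$.
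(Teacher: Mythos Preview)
Your proposal is correct and follows essentially the same route as the paper's proof: substitute $\log^{(k)}(x)=1+\sum_{m\ge1}t(m,k-1,1)\,x^{m}/m!$ into the generating function of Lemma~\ref{lemma:Tnkr} and expand $(1+g(x))^{y}$. The paper compresses all of your coefficient extraction into the single phrase ``the recurrence relation follows,'' whereas you spell out the passage from ordered compositions to $a!\,f_{\lambda}$, the identity $\binom{y}{a}a!=(y)_{a}=\sum_{r}s(a,r)y^{r}$, and the rewriting $s(a,r)=(-1)^{a-r}e_{a-r}(1,\ldots,a-1)$; your additional combinatorial derivation via $t(n,k,r)=\sum_{a}t(n,k-1,a)\,s(a,r)$ and block decomposition is a nice extra that the paper does not include.
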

\begin{proof}
By substituting 
\begin{align*}
\log^{(k)}(x)=1+\sum_{1\le n}t(n,k-1,1)\genfrac{}{}{}{}{x^n}{n!},
\end{align*}
into Eq. (\ref{eq:egftnkr}), the recurrence relation follows.
\end{proof}
\begin{remark}
Note that the specialization of the elementary symmetric function 
is equal to a Stirling number of the first kind:
\begin{align*}
e_{a-r}(1,2\ldots,a-1)=(-1)^{a-r}s(a,r).
\end{align*}
\end{remark}

We give a recurrence relation for $T(n,k,r)$.
By substituting 
\begin{align*}
\exp^{(k-1)}(x)=\sum_{1\le n}\genfrac{}{}{}{}{x^{n}}{n!}\sum_{r'=1}^{n}T(n,k-1,r'),
\end{align*}
into the definition (\ref{eq:expk}) of $\exp^{(k)}(x,y)$, 
it is easy to see that the expression (\ref{eq:egfTnkr}) is equivalent to 
the following expression:
\begin{prop}
The numbers $T(n,k,r)$ satisfies the recurrence relation
\begin{align}
\label{eq:recTnkr}
T(n,k,r)
=\sum_{\genfrac{}{}{0pt}{}{\lambda\vdash n}{l(\lambda)=r}}
f_{\lambda}\prod_{i=1}^{r}\left(
\sum_{r'=1}^{\lambda_{i}}T(\lambda_{i},k-1,r')
\right),
\end{align}
with the initial conditions $T(0,k,0)=T(1,k,1)=0$.
\end{prop}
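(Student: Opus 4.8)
The plan is to derive the recurrence \eqref{eq:recTnkr} directly from the generating function identity \eqref{eq:egfTnkr} of Lemma \ref{lemma:Tnkr}, which states that $\sum_{n,r}T(n,k,r)\tfrac{x^n}{n!}y^r=\exp^{(k)}(x,y)$. By the defining relation \eqref{eq:expk}, we have $\exp^{(k)}(x,y)=\exp\bigl(y\,\exp^{(k-1)}(x)\bigr)$, so the coefficient of $y^r$ in $\exp^{(k)}(x,y)$ is precisely $\tfrac{1}{r!}\bigl(\exp^{(k-1)}(x)\bigr)^r$. Hence the first step is to extract $y^r$ and reduce the claim to showing that the coefficient of $\tfrac{x^n}{n!}$ in $\tfrac{1}{r!}\bigl(\exp^{(k-1)}(x)\bigr)^r$ equals the right-hand side of \eqref{eq:recTnkr}.

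Next I would use the identity, already recorded in the excerpt, that
\begin{align*}
\exp^{(k-1)}(x)=\sum_{1\le n}\genfrac{}{}{}{}{x^n}{n!}\sum_{r'=1}^{n}T(n,k-1,r'),
\end{align*}
which expresses the single-variable series $\exp^{(k-1)}(x)$ as an exponential generating function whose $n$-th coefficient is the total number $c_n:=\sum_{r'=1}^{n}T(n,k-1,r')$ (with $c_0=0$, since the series has no constant term). The core computation is then the standard multinomial expansion of the $r$-th power of an exponential generating function with vanishing constant term. Writing $C(x):=\exp^{(k-1)}(x)=\sum_{n\ge1}c_n\tfrac{x^n}{n!}$, the coefficient of $\tfrac{x^n}{n!}$ in $\tfrac{1}{r!}C(x)^r$ is obtained by summing over all ways of distributing the $n$ indices into $r$ ordered nonempty parts and then dividing out the ordering; this is exactly the combinatorial content encoded by the numbers $f_\lambda$.

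The key step is therefore to identify the multinomial bookkeeping with the partition sum $\sum_{\lambda\vdash n,\,l(\lambda)=r}f_\lambda\prod_{i}c_{\lambda_i}$. Concretely, the coefficient of $\tfrac{x^n}{n!}$ in $C(x)^r$ equals $\sum n!/\prod(\lambda_i!)$ times $\prod c_{\lambda_i}$, summed over compositions of $n$ into $r$ positive parts; grouping the compositions into partitions introduces a factor counting the distinct orderings of the parts, namely $r!/\prod_j m_j!$ where $m_j$ is the multiplicity of $j$ in $\lambda$. Dividing by $r!$ leaves the factor $n!/\bigl(\prod_j (j!)^{m_j}\,m_j!\bigr)=f_\lambda$, and substituting $c_{\lambda_i}=\sum_{r'=1}^{\lambda_i}T(\lambda_i,k-1,r')$ yields \eqref{eq:recTnkr} exactly. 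The initial conditions follow from the fact that $\exp^{(k-1)}(x)$ has zero constant term. The main obstacle, such as it is, is purely bookkeeping: correctly reconciling the $r!$ from the power-series expansion with the $m_j!$ arising from passing from ordered compositions to unordered partitions, so that the surviving normalization is precisely $f_\lambda$ and not some off-by-a-symmetry-factor variant.
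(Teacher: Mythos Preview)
Your proposal is correct and takes essentially the same approach as the paper: the paper's proof consists of the single remark that substituting the identity $\exp^{(k-1)}(x)=\sum_{n\ge1}\tfrac{x^n}{n!}\sum_{r'}T(n,k-1,r')$ into $\exp^{(k)}(x,y)=\exp(y\,\exp^{(k-1)}(x))$ and comparing with \eqref{eq:egfTnkr} yields the recurrence, which is exactly what you carry out in detail. Your explicit bookkeeping of the multinomial factors collapsing to $f_\lambda$ is a welcome elaboration of what the paper leaves as ``it is easy to see.''
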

We interpret Eq. (\ref{eq:recTnkr}) in terms of weighted partitions.
Recall that $T(n,k,r)$ counts the number of weighted partitions such that 
it has $k$ layers and $r$ blocks in the first layer.
Below, we will show the recurrence relation (\ref{eq:recTnkr}) holds 
for the number of weighted partitions.
Suppose $\lambda\vdash n$ and $r:=l(\lambda)$. 
By definition, the numbers $f_{\lambda}$ count the weighted partitions $\pi$
such that $\pi$ has $r$ blocks, and each block has $\lambda_{i}$ elements 
where $1\le i\le r$.
We regard these $r$ blocks as the first layer $\nu^{(1)}$ of $\pi$.
To obtain a weighted partition $\pi$ from $\nu^{(1)}$, 
we have to overlay $k-1$ layers on $\nu^{(1)}$.
Since each block in $\nu^{(1)}$ has $\lambda_{i}$ elements, 
we has $\sum_{r'}T(\lambda_{i},k-1,r')$ ways to put layers. 
This is because the numbers $T(\lambda_{i},k-1,r')$ count 
weighted partitions of $[\lambda_{i}]$ which have $k-1$ layers with $r'$ blocks.  
From these observations, it is easy to see that Eq. (\ref{eq:recTnkr})
holds for weighted partitions.

We have an analogue of Proposition \ref{prop:rec1tnkr} for $T(n,k,r)$.
\begin{prop}
The numbers $T(n,k,r)$ satisfy the recurrence relations:
\begin{align}
\label{eq:rec2Tnkr}
T(n,k,r)=\sum_{p=0}^{n-r}\genfrac{(}{)}{0pt}{}{n-1}{p}T(p+1,k,1)T(n-p-1,k,r-1),
\end{align} 
and 
\begin{align}
\label{eq:rec2Tnk1}
T(n,k,1)=\sum_{r=1}^{n}T(n,k-1,r),
\end{align}
with initial conditions $T(0,k,0)=T(1,k,1)=1$.
\end{prop}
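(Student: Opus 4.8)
The plan is to prove the two identities separately, using throughout that $T(n,k,r)$ enumerates the weighted partitions in $\mathcal{P}_n^{(k)}(r)$ by Proposition \ref{prop:numwp}, just as the proof of Proposition \ref{prop:rec1tnkr} exploited the diagram interpretation of $t(n,k,r)$. The two statements are the $T$-analogues of Eq. (\ref{eq:rec2tnkr}) and of the $r=1$ relation, so the arguments should mirror those already given.

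First I would dispose of Eq. (\ref{eq:rec2Tnk1}), which is essentially a restatement of the second half of Proposition \ref{prop:numwp}. Starting from the defining relation $T(n,k,r)=\sum_{r'}T(n,k-1,r')\,S(r',r)$ used in Lemma \ref{lemma:Tnkr}, I set $r=1$ and invoke $S(r',1)=1$ for every $r'\ge 1$, since there is a unique partition of a set into a single block. This collapses the sum to $\sum_{r'=1}^{n}T(n,k-1,r')$, which is exactly the claim.

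For Eq. (\ref{eq:rec2Tnkr}) I would argue combinatorially on weighted partitions, in direct parallel with the argument for $t(n,k,r)$. Given $\pi^{(k)}\in\mathcal{P}_n^{(k)}(r)$, let $B_1$ be the block of the first layer containing the element $1$, and suppose $|B_1|=p+1$. The remaining $p$ elements of $B_1$ are chosen from the other $n-1$ elements in $\binom{n-1}{p}$ ways. Because every higher layer refines the block structure of the layer below it, the entire layered data sitting over $B_1$ is an independent weighted partition of these $p+1$ elements whose first layer is a single block; after order-preserving relabeling to $[p+1]$ it is counted by $T(p+1,k,1)$. The remaining $n-p-1$ elements, together with all layers restricted to them, form a weighted partition with $r-1$ first-layer blocks, counted by $T(n-p-1,k,r-1)$. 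Summing over $p$ from $0$ to $n-r$ (the range forced by requiring $r-1$ nonempty first-layer blocks among the remaining points) yields the recurrence, and the initial conditions $T(0,k,0)=T(1,k,1)=1$ are immediate from the empty and singleton weighted partitions.

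The main point to verify carefully is that this decomposition is an honest bijection: splitting off $B_1$ together with its overlaid layers and relabeling must give a well-defined element of $\mathcal{P}_{p+1}^{(k)}(1)$, and reassembly from a choice of $p$-subset, an element of $\mathcal{P}_{p+1}^{(k)}(1)$, and an element of $\mathcal{P}_{n-p-1}^{(k)}(r-1)$ must recover a unique $\pi^{(k)}$. The only subtlety is that the condition ($\star$) is preserved under both restriction and reassembly, which holds because ($\star$) is imposed block-by-block within each layer and is unaffected by the relabeling. I expect this bijectivity check, rather than any computation, to be the part requiring the most care.
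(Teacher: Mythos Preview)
Your proposal is correct and follows essentially the same approach as the paper's proof, which simply cites Proposition \ref{prop:numwp} for Eq.~(\ref{eq:rec2Tnk1}) and says Eq.~(\ref{eq:rec2Tnkr}) is proven by the same argument as Proposition \ref{prop:rec1tnkr}. Your write-up is in fact more detailed than the paper's, particularly in spelling out the bijectivity check and the preservation of condition~($\star$), but the underlying ideas are identical.
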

\begin{proof}
From Proposition \ref{prop:numwp}, it is easy to see that 
the numbers $T(n,k,1)$ satisfy Eq. (\ref{eq:rec2Tnk1}). 

Equation (\ref{eq:rec2Tnkr}) can be proven by the same 
argument as the proof of Proposition \ref{prop:rec1tnkr}.
\end{proof}

\subsection{Weighted partitions and rooted trees}
Fix $(n,k)$ with $n\ge2$ and $k\ge2$.
A {\it $k$-level rooted tree} $T_n$ with $n$ leaves is a rooted planar tree 
with $n$ leaves such that there are exactly $k+1$ edges from a leaf to the root.
We label a leaf of $T(n)$ by an integer $i\in[n]$.
An integer in $[n]$ appears exactly once in leaves.
We call $T_n$ with labels {\it $k$-level labeled rooted trees} with $n$ leaves.
\begin{defn}
We denote by $\mathcal{T}^{(k)}_{n}$ the set of $k$-level rooted 
trees with $n$ leaves, and by $\mathcal{L'}_{n}^{(k)}$ the set of 
$k$-level labeled rooted trees with $n$ leaves.
\end{defn}

Let $T\in\mathcal{T}_{n}^{(k)}$, and 
suppose that a node $c$ has $p$ children and they are the root of sub-trees 
$T_1,\ldots, T_{p}$ in $T$.
Let $S_1,\ldots, S_{q}$ be the set of trees such that
\begin{enumerate}
\item $S_{r}\neq S_{r'}$ if $r\neq r'$.
\item Given $i\in[p]$, there exists $j\in[q]$ such that $T_{i}=S_{j}$.
\item There is at least one $i\in[p]$ such that $S_{j}=T_{i}$.
\end{enumerate}
By definition, $\{S_{j}: j\in[q] \}$ is the set of trees which appear 
in $\{T_{i}: i\in[p] \}$. We always have $p\ge q$.

Let $m_{1},\ldots,m_{q}$ be the number 
\begin{align}
\label{eq:defmj}
m_{j}:=\#\{T_{i} | T_{i}=S_{j}, 1\le i\le p \}.
\end{align}
By definition, we have $m_{j}\ge1$ for $j\in[q]$ and $\sum_{j=1}^{q}m_{j}=p$.
We define a {\it symmetric factor} $F(c)$ of the node $c$ by
\begin{align*}
F(c):=\prod_{j=1}^{q}(m_{j}!)^{-1}.
\end{align*}
The factor $F(c)$ comes from the symmetry of a $k$-level rooted tree at the node $c$.
If $T_a=T_b$ for $a,b\in[p]$, the tree is invariant under the exchange of $T_a$ and $T_{b}$. 

Similarly, we introduce an equivalence relation on $L(T)\in\mathcal{L'}_{n}^{(k)}$.
Since the tree structure of the labeled tree $L(T)$ is $T$, we adopt the same notation 
$T_{i}$ and $S_j$ as before.
We denote by $L(T:i\leftrightarrow i')$ the labeled tree obtained from $L(T)$ by 
exchanging $T_{i}$ and $T_{i'}$. 
Then, we write the equivalence relation as 
\begin{align}
\label{eq:equivrel}
L(T)\sim L(T,i\leftrightarrow i').
\end{align}

\begin{defn}
We define 
\begin{align*}
\mathcal{LRT}_{n}^{(k)}:=\mathcal{L'}_{n}^{(k)}/\sim.
\end{align*}
\end{defn}

\begin{prop}
The set $\mathcal{LRT}_{n}^{(k)}$ is bijective to the set $\mathcal{P}_{n}^{(k)}$ of weighted partitions.
\end{prop}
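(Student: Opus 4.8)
The plan is to construct an explicit bijection between $\mathcal{LRT}_{n}^{(k)}$ and $\mathcal{P}_{n}^{(k)}$ by reading off the nested block structure of a weighted partition from the level structure of a $k$-level labeled rooted tree. The main idea is that a $k$-level tree has exactly $k+1$ edges from each leaf to the root, so we may index the internal nodes by their distance from the root, producing $k$ horizontal ``slices''; these slices will correspond precisely to the $k$ layers of a weighted partition. First I would set up the correspondence at the level of unlabeled structure: given $L(T)\in\mathcal{L'}_{n}^{(k)}$, I would associate to each internal node $c$ at depth $d$ (counting from the root, so $1\le d\le k$) the set $B(c)$ of leaf labels in the subtree rooted at $c$. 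The root's children, at depth $1$, will give the first-layer blocks $B_i^{(1)}$; the grandchildren, at depth $2$, will give the second-layer blocks $B_i^{(2)}$, and so on down to depth $k$. Because each node's leaf-set is the disjoint union of its children's leaf-sets, the nesting $B_i^{(1)}\supseteq B_i^{(2)}\supseteq\cdots\supseteq B_i^{(k)}$ is automatic, matching the defining property of weighted partitions noted after Definition~\ref{defn:Pnk}.

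The first key step after defining the map is to verify that the output satisfies condition $(\star)$: a block in the $k'$-th layer with $2\le k'\le k$ must be empty or contain sets of at least two elements. I would arrange the tree convention so that a node at depth $k'\ge2$ which would produce a singleton refinement is instead absent (equivalently, a leaf hangs directly from its depth-$(k'-1)$ ancestor through a chain of unary edges), which is exactly what forces the empty-block or size-$\ge2$ dichotomy; this is where the rigidity of the ``$k+1$ edges from leaf to root'' requirement does the work, since it dictates how singleton branches are padded. The second key step is to check that the map descends to the quotient by $\sim$: exchanging two equal subtrees $T_i$ and $T_{i'}$ in $L(T)$ does not change the \emph{set} of blocks produced at any layer, since the blocks are recorded as an unordered collection and the order of blocks in a weighted partition is irrelevant (as emphasized in the one-line notation discussion). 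Hence the induced map on $\mathcal{LRT}_{n}^{(k)}=\mathcal{L'}_{n}^{(k)}/\!\sim$ is well-defined.

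For injectivity and surjectivity, I would construct the inverse map directly: from a weighted partition $\pi^{(k)}$ with its nested blocks, build a tree by creating an internal node for each distinct block $B_i^{(k')}$ at each layer, attaching the node for $B_i^{(k')}$ as a child of the node for the unique layer-$(k'-1)$ block containing it, attaching leaves labeled by the elements, and padding any leaf that ``stops early'' with a unary chain so that the total leaf-to-root edge count is uniformly $k+1$. Running this construction and the forward map in succession recovers the original data on both sides, giving mutually inverse maps up to the equivalence $\sim$. The main obstacle I expect is the bookkeeping around condition $(\star)$ and the padding of short branches: one must be careful that the ``empty block'' case in a layer and the ``a leaf already terminated at an earlier layer'' case in the tree are made to correspond consistently, so that neither side produces spurious or missing singletons. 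Concretely, I anticipate that the cleanest way to handle this is to prove by induction on $k$ that the forward and inverse maps are mutually inverse, using the recursive structure already exploited in the proof of Proposition~\ref{prop:numwp}, where a weighted partition in $\mathcal{P}_n^{(k)}$ is built from a pair $(\pi^{(k-1)},\pi')$; the same peeling of the top layer corresponds to peeling the root and its incident edges from the tree.
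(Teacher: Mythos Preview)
Your proposal is correct and follows essentially the same approach as the paper: both read off the $k'$-th layer blocks from the leaf-sets of subtrees rooted at depth-$k'$ nodes, check that the equivalence relation $\sim$ corresponds to the irrelevance of block order, and invert the construction by rebuilding the tree from the nested block structure. Your write-up is considerably more detailed than the paper's (which is quite terse), and your flagged obstacle about condition~$(\star)$ and the padding of short branches is real bookkeeping that the paper leaves implicit; your suggested resolution via unary chains and the induction on $k$ paralleling Proposition~\ref{prop:numwp} is sound and would flesh out what the paper asserts without proof.
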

\begin{proof}
We construct a weighted partition $\pi\in\mathcal{P}_{n}^{(k)}$ from 
$L(T)\in\mathcal{LRT}_{n}^{(k)}$, and show that the map is a bijection.

Let $c$ be the node at depth $k'$, i.e., there are $k'$ edges from $c$ to the root.
Let $T(c)$ be the sub-tree whose root is $c$.
Then, the labels on the leaves in $T(c)$ belong to the same block in the $k'$-th 
layer in $\pi$.
Then, it is obvious that $\pi$ is a weighted partition in $\mathcal{P}_{n}^{(k)}$.
Since we ignore the order of blocks in $\pi$, the equivalence relation (\ref{eq:equivrel})
follows.

Conversely, given a weighted partition $\pi\in\mathcal{P}_{n}^{(k)}$, one can construct a $k$-level 
labeled rooted tree from $\pi$ by inverting the above map.
\end{proof}

\begin{example}
For $(n,k)=(3,2)$, we have $12$ weighted partitions.
We have six $2$-level rooted trees with $3$ leaves in $\mathcal{LRT}^{(2)}_{3}$:
\begin{align*}
\tikzpic{-0.5}{[scale=0.7]
\node (root) at(0,0){$\bullet$};
\node (l1)at(-1,-1){$\bullet$};
\node (l2)at(-1,-2){$\bullet$};
\node(l3)at(-1,-3){};
\node (c1)at(0,-1){$\bullet$};
\node(c2)at(0,-2){$\bullet$};
\node(c3)at(0,-3){};
\node (r1)at(1,-1){$\bullet$};
\node (r2)at(1,-2){$\bullet$};
\node(r3)at(1,-3){};
\draw(root)--(l1)--(l2)--(l3)(root)--(c1)--(c2)--(c3)(root)--(r1)--(r2)--(r3);
}
\quad
\tikzpic{-0.5}{[scale=0.7]
\node (root) at(0,0){$\bullet$};
\node (root2)at(0,-1){$\bullet$};
\node (l2)at(-1,-2){$\bullet$};
\node(l3)at(-1,-3){};
\node(c2)at(0,-2){$\bullet$};
\node(c3)at(0,-3){};
\node (r2)at(1,-2){$\bullet$};
\node(r3)at(1,-3){};
\draw(root)--(root2)--(l2)--(l3)(root2)--(c2)--(c3)(root2)--(r2)--(r3);
}
\quad
\tikzpic{-0.5}{[scale=0.7]
\node (root) at(0,0){$\bullet$};
\node (root2)at(0,-1){$\bullet$};
\node (root3)at(0,-2){$\bullet$};
\node(l3)at(-1,-3){};
\node(c3)at(0,-3){};
\node(r3)at(1,-3){};
\draw(root)--(root2)--(root3)--(l3)(root3)--(c3)(root3)--(r3);
}
\quad
\tikzpic{-0.5}{[scale=0.7]
\node(root)at(0,0){$\bullet$};
\node(l1)at(-1,-1){$\bullet$};
\node(l2)at(-1.5,-2){$\bullet$};
\node(l3)at(-1.5,-3){};
\node(c1)at(-0.5,-2){$\bullet$};
\node(c2)at(-0.5,-3){};
\node(r1)at(1,-1){$\bullet$};
\node(r2)at(1,-2){$\bullet$};
\node(r3)at(1,-3){};
\draw(root)--(l1)--(l2)--(l3)(l1)--(c1)--(c2)(root)--(r1)--(r2)--(r3);
}
\quad
\tikzpic{-0.5}{[scale=0.7]
\node(root)at(0,0){$\bullet$};
\node(l1)at(-1,-1){$\bullet$};
\node(root2)at(-1,-2){$\bullet$};
\node(l3)at(-1.5,-3){};
\node(c2)at(-0.5,-3){};
\node(r1)at(1,-1){$\bullet$};
\node(r2)at(1,-2){$\bullet$};
\node(r3)at(1,-3){};
\draw(root)--(l1)--(root2)--(l3)(root2)--(c2)(root)--(r1)--(r2)--(r3);
}
\quad
\tikzpic{-0.5}{[scale=0.7]
\node(root)at(0,0){$\bullet$};
\node(root2)at(0,-1){$\bullet$};
\node(root3)at(-1,-2){$\bullet$};
\node(l3)at(-1.5,-3){};
\node(c2)at(-0.5,-3){};
\node(r2)at(1,-2){$\bullet$};
\node(r3)at(1,-3){};
\draw(root)--(root2)--(root3)--(l3)(root3)--(c2)
(root2)--(r2)--(r3);
}
\end{align*}
The number of labelings in each tree is $1,1,1,3,3$ and $3$ from 
left to right. In total, we have $12$ representatives for $2$-level labeled rooted trees.
\end{example}

\begin{example}
\label{ex:LRT}
We consider the same weighted partition in Figure \ref{fig:cr}.
The corresponding $3$-level labeled rooted tree is 
\begin{align*}
\tikzpic{-0.5}{
\node (root) at (0,0){$\bullet$};
\node (l1) at (-1.75,-1){$\bullet$};
\node (c1) at (1,-1){$\bullet$};
\node (r1) at (2.5,-1){$\bullet$};
\node (ll2)at (-2.5,-2){$\bullet$};
\node (l2) at (-1,-2){$\bullet$};
\node (r2) at (1,-2) {$\bullet$};
\node (rr2) at (2.5,-2){$\bullet$};
\node (lll3)at (-2.5,-3){$\bullet$};
\node (ll3) at (-1.5,-3){$\bullet$};
\node (l3)at (-0.5,-3){$\bullet$};
\node (r3)at(1,-3){$\bullet$};
\node (rr3)at (2.5,-3){$\bullet$};
\node(lll4) at (-2.5,-4){$1$};
\node(ll4) at (-1.5,-4){$3$};
\node(l4) at (-0.5,-4){$5$};
\node(r4) at (0.5,-4){$2$};
\node(rr4) at (1.5,-4){$4$};
\node(rrr4) at (2.5,-4){$6$};
\draw(root)--(l1)--(ll2)--(lll3)--(lll4)(l1)--(l2)--(ll3)--(ll4)(l2)--(l3)--(l4);
\draw(root)--(c1)--(r2)--(r3)--(r4)(r3)--(rr4);
\draw(root)--(r1)--(rr2)--(rr3)--(rrr4);
}
\end{align*} 
The labeled rooted tree is invariant under the exchange of the labels $3$ and $5$, or $2$ and $4$.  
We obtain the weighted partition $1(35)^2/(24)^3/6$ from this $3$-level labeled rooted tree as follows.
Since the integers $2$ and $4$ collide at the third level, we have $(24)^{3}$.
Similarly, the integers $3$ and $5$ collide at the second level, we have $(35)^2$.
Then, since the integers $1$ and $(35)$ collide at the first level, we have $1(35)^2$.
Three blocks $1(35)^2$, $(24)^{3}$ and $6$ collide at the zeroth level, which gives the 
weighted partition $1(35)^2/(24)^3/6$.
\end{example}

\begin{remark}
In a $k$-level labeled rooted tree, we have the root at the zeroth level.
This means that we have a partition $12\ldots n$ at the zeroth layer.
If a node of $k'$-th level with $k'\ge1$ has at least two children, 
we merge the blocks below this node at the $k'$-th layer.
These imply the condition ($\star$) for a weighted partition. 
\end{remark}

\begin{prop}
\label{prop:LRT}
Let $T\in\mathcal{T}_{n}^{(k)}$ and $L(T)\subset\mathcal{LRT}_{n}^{(k)}$ the set of representatives of 
$k$-level labeled rooted trees whose tree structure is $T$.
Then, 
\begin{align}
|L(T)|=n!\prod_{c\in T}F(c).
\end{align}
\end{prop}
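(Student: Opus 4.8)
The plan is to realize $L(T)$ as the set of orbits of the leaf-labelings of the fixed unlabeled tree $T$ under its automorphism group, and then to count these orbits. First I would fix the tree structure $T$ and observe that, before imposing the relation $\sim$, there are exactly $n!$ labeled trees in $\mathcal{L'}^{(k)}_{n}$ with structure $T$, since a labeling is nothing but a bijection from $[n]$ to the $n$ leaves. The equivalence relation (\ref{eq:equivrel}), generated by exchanging isomorphic sibling subtrees $T_{i}\leftrightarrow T_{i'}$ at every node, is precisely the action of $\mathrm{Aut}(T)$, the automorphism group of $T$ viewed as an unlabeled rooted tree, on these labelings: each element of $\mathrm{Aut}(T)$ permutes the leaves and hence relabels a given labeling. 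Thus $L(T)$ is exactly the set of $\mathrm{Aut}(T)$-orbits.

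Next I would compute $|\mathrm{Aut}(T)|$ and show that the action is free. For the order, I would use the recursive (wreath-product) structure of automorphisms of a rooted tree: at a node $c$ whose children subtrees split into $q$ isomorphism types with multiplicities $m_{1}(c),\ldots,m_{q}(c)$, an automorphism may independently permute the $m_{j}(c)$ copies of each type and act by an automorphism inside each copy. Unwinding this recursion over the whole tree gives $|\mathrm{Aut}(T)|=\prod_{c\in T}\prod_{j}m_{j}(c)!=\prod_{c\in T}F(c)^{-1}$, the last equality being just the definition $F(c)=\prod_{j}(m_{j}!)^{-1}$. For freeness I would argue that any non-identity $\phi\in\mathrm{Aut}(T)$ must genuinely permute two distinct sibling subtrees at some node; since such subtrees have disjoint leaf sets, $\phi$ moves at least one leaf, so no non-identity automorphism can fix a (necessarily injective) labeling. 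Hence every orbit has size exactly $|\mathrm{Aut}(T)|$, and $|L(T)|=n!/|\mathrm{Aut}(T)|=n!\prod_{c\in T}F(c)$.

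As an alternative more in keeping with the elementary tone of the section, I would run the same bookkeeping as a direct induction on the number of leaves, without naming $\mathrm{Aut}(T)$. Writing the children subtrees of the root as types $S_{1},\ldots,S_{q}$, where $S_{j}$ has $s_{j}$ leaves and occurs $m_{j}$ times with $\sum_{j}m_{j}s_{j}=n$, I would count representatives by first choosing an ordered set partition of $[n]$ into the subtree label-blocks, in $n!/\prod_{j}(s_{j}!)^{m_{j}}$ ways, then labeling each copy as a representative, in $\prod_{j}|L(S_{j})|^{m_{j}}$ ways, and finally dividing by $\prod_{j}m_{j}!$ to account for permuting the interchangeable copies of each type. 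Substituting the inductive hypothesis $|L(S_{j})|=s_{j}!\prod_{c\in S_{j}}F(c)$ together with $F(\mathrm{root})=1/\prod_{j}m_{j}!$ and the factorization $\prod_{c\in T}F(c)=F(\mathrm{root})\prod_{j}\bigl(\prod_{c\in S_{j}}F(c)\bigr)^{m_{j}}$ collapses the product to $n!\prod_{c\in T}F(c)$, with base case a single leaf ($n=1$).

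The main obstacle I expect is justifying that the division by $\prod_{j}m_{j}!$, equivalently the freeness of the action, is exact rather than merely a heuristic count. One must verify that after labeling, the $m_{j}$ copies of a given type always receive pairwise disjoint label sets, so that permuting them produces exactly $m_{j}!$ distinct labeled configurations in the same class and no configuration is fixed by a nontrivial permutation. This is the single point where the distinctness of the labels, as opposed to pure combinatorial accounting, is essential, and it is what guarantees that all orbits have the full size $|\mathrm{Aut}(T)|$.
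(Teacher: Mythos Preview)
Your proposal is correct and follows essentially the same approach as the paper: count all $n!$ leaf-labelings of the fixed tree $T$ and then divide by the size of each equivalence class, which factors over the nodes as $\prod_{c}\prod_{j}m_{j}(c)!$. The paper's own proof is a three-sentence sketch that simply asserts this division; your version is more rigorous in that you explicitly identify the equivalence classes as $\mathrm{Aut}(T)$-orbits, compute $|\mathrm{Aut}(T)|$ via the wreath-product recursion, and justify the freeness of the action (the point you correctly single out as the only substantive step), but the underlying idea is the same.
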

\begin{proof}
Given $T$, we have $n!$ ways to put integer labels on the leaves of $T$.
Since we fix the tree structure, the equivalence relation (\ref{eq:equivrel})
implies that a node $c$ has $\prod_{j}(m_j!)$ equivalent ways to put labels, 
where $m_j$ is defined in Eq. (\ref{eq:defmj}).
Thus, to obtain the number of representatives of labeled trees, 
we have to divide $n!$ by $\prod_{j}(m_j!)$.
This completes the proof.
\end{proof}

\begin{example}
We consider the $3$-level labeled rooted tree as in Example \ref{ex:LRT}.
In the tree, we have two nodes whose symmetric functor is $2!=4$.
Then, by Proposition \ref{prop:LRT}, we have $6!/(2!)^2=180$ representatives.
\end{example}

\section{Covering relation}
\label{sec:cr}
Let $\pi^{(k)}\in\mathcal{P}_{n}^{(k)}$ and suppose that 
$\pi^{(k)}$ has $n-r$ blocks in the first layer.
The integer $r$ takes a value in $\{0,1,\ldots,n-1\}$.

\begin{defn}
\label{defn:rk}
The function $\rho:\mathcal{P}_{n}^{(k)}\rightarrow \mathbb{Z}_{\ge0}$ is given 
by $\rho(\pi^{(k)}):=r$, where a weighted partition $\pi^{(k)}$ has $n-r$ blocks.
\end{defn}

\begin{remark}
\label{remark:rho}
To define the function $\rho$, we do not need the information about the 
$k'$-th layers with $2\le k'\le k$.
This property will become clear when we regard the function $\rho$ 
as a rank function. The function $\rho$ is compatible with the definition 
of a cover relation in the case of $k\ge2$.
\end{remark}

Below, we will see that the function $\rho$ is a rank function of $\mathcal{P}_{n}^{(k)}$.

We first recall the covering relation for $\mathcal{P}^{(1)}_{n}$.
Let $\pi_i\in\mathcal{P}_{n}^{(1)}$, $i=1,2$, and 
$\rho(\pi_{2})=\rho(\pi_{1})+1$. 
Suppose that $\pi_1$ has $m$ blocks $B_1,\ldots, B_{m}$ and 
$\pi_2$ has $m-1$ blocks $B'_{1},\ldots, B'_{m-1}$.
We say that a partition $\pi_{1}$ is a refinement of a partition$\pi_2$
if there exists a unique $q$ such that $B'_{q}=B_{i_1}\cup B_{i_2}$ and 
other blocks $B'_{p}$, $p\neq q$, coincide with $B_{j}$ for 
some $j\neq i_1, i_2$.

\begin{defn}
A partition $\pi_2$ covers $\pi_1$ if and only if $\pi_1$ is a refinement of $\pi_2$
and $\rho(\pi_2)=\rho(\pi_1)+1$.
We write $\pi_1\lessdot\pi_2$.
\end{defn}
Then, it is obvious that the function $\rho$ is a rank function on $\mathcal{P}_{n}^{(1)}$.

Given two partitions $\pi_1$ and $\pi_2$, we write 
$\pi_1\le\pi_2$ if there exists a sequence of partitions 
$\pi_1=x_{0}\lessdot x_1\lessdot x_2\lessdot \ldots \lessdot x_{m}=\pi_2$.
It is a routine to show that $P^{(1)}_{n}$ is a partially ordered set 
with the homogeneous relation $\le$.

Further, the poset $(\mathcal{P}_{n}^{(1)},\le)$ is a bounded graded lattice.
The greatest element $\hat{1}$ is $[n]$, and 
the least element $\hat{0}$ is $1/2/\ldots/n$.
Since the function $\rho$ on $\mathcal{P}_{n}^{(1)}$ is a rank 
function, the lattice has a natural grading by $\rho$.

Figure \ref{fig:n3k1} shows the Hasse diagram of the lattice $(\mathcal{P}_{3}^{(1)},\le)$.
The least element is in the left-most column and the greatest element
is in the right-most column.

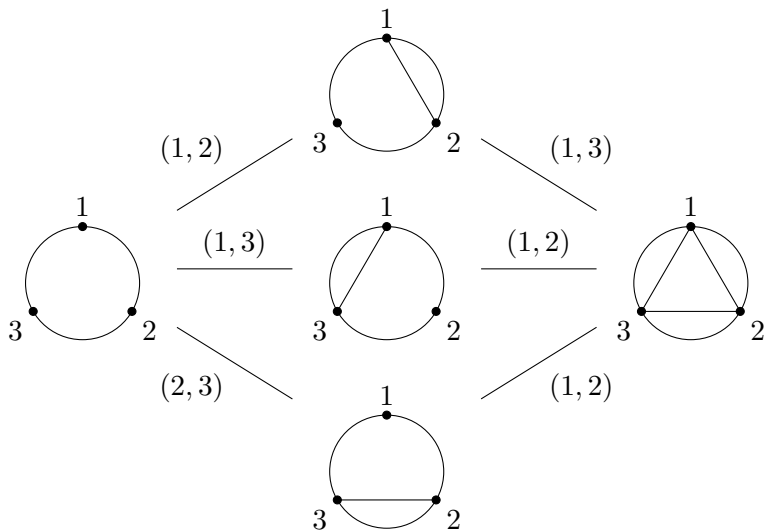
\begin{figure}[ht]
\begin{tikzpicture}
\node (0) at (0,0){
\begin{tikzpicture}\draw circle(0.75cm);
\foreach \a in {90,210,-30}
\filldraw [black] (\a:0.75cm)circle(1.5pt);
\draw(90:0.75cm)node[anchor=south]{$1$};
\draw(-30:0.75cm)node[anchor=north west]{$2$};
\draw(210:0.75cm)node[anchor=north east]{$3$};
\end{tikzpicture}
};
\node (1) at (4,2.5){
\begin{tikzpicture}\draw circle(0.75cm);
\foreach \a in {90,210,-30}
\filldraw [black] (\a:0.75cm)circle(1.5pt);
\draw (90:0.75cm) to (-30:0.75cm);
\draw(90:0.75cm)node[anchor=south]{$1$};
\draw(-30:0.75cm)node[anchor=north west]{$2$};
\draw(210:0.75cm)node[anchor=north east]{$3$};
\end{tikzpicture}
};
\node (2) at (4,0){
\begin{tikzpicture}\draw circle(0.75cm);
\foreach \a in {90,210,-30}
\filldraw [black] (\a:0.75cm)circle(1.5pt);
\draw (90:0.75cm) to (210:0.75cm);
\draw(90:0.75cm)node[anchor=south]{$1$};
\draw(-30:0.75cm)node[anchor=north west]{$2$};
\draw(210:0.75cm)node[anchor=north east]{$3$};
\end{tikzpicture}
};
\node (3) at (4,-2.5){
\begin{tikzpicture}\draw circle(0.75cm);
\foreach \a in {90,210,-30}
\filldraw [black] (\a:0.75cm)circle(1.5pt);
\draw (210:0.75cm) to (-30:0.75cm);
\draw(90:0.75cm)node[anchor=south]{$1$};
\draw(-30:0.75cm)node[anchor=north west]{$2$};
\draw(210:0.75cm)node[anchor=north east]{$3$};
\end{tikzpicture}
};
\node (4) at (8,0){
\begin{tikzpicture}\draw circle(0.75cm);
\foreach \a in {90,210,-30}
\filldraw [black] (\a:0.75cm)circle(1.5pt);
\draw (90:0.75cm) to (210:0.75cm) to (-30:0.75cm) to (90:0.75cm);
\draw(90:0.75cm)node[anchor=south]{$1$};
\draw(-30:0.75cm)node[anchor=north west]{$2$};
\draw(210:0.75cm)node[anchor=north east]{$3$};
\end{tikzpicture}
};
\draw(0) to node[anchor=south east]{$(1,2)$} (1)to node[anchor=south west]{$(1,3)$} (4) 
(0) to node[anchor=south]{$(1,3)$} (2) to node[anchor=south]{$(1,2)$} (4) 
(0) to node[anchor=north east]{$(2,3)$} (3) to node[anchor=north west]{$(1,2)$} (4);
\end{tikzpicture}
\caption{The lattice for $(n,k)=(3,1)$}
\label{fig:n3k1}
\end{figure}

Suppose that $\pi_1\lessdot\pi_2$ and a block $B'_{q}$ of $\pi_2$ is 
expressed as $B'_q=B_{i_1}\cup B_{i_2}$ where $B_{i_1}$ and $B_{i_2}$ are 
the blocks of $\pi_1$.
We define two positive integers $\alpha$ and $\beta$ by 
\begin{align}
\label{eq:alphabeta}
\alpha:=\min(B_{i_1}\cup B_{i_2}), \qquad \beta:=\max(\min(B_{i_1}),\min(B_{i_2})).
\end{align}
To specify a layer of blocks, we write $l$ as a subscript to the right of 
the pair of integers $(\alpha,\beta)$, namely, we write $(\alpha,\beta)_{l}$ 
where $l\in[k]$.
We assign a pair of integers $(\alpha,\beta)_{l}$ with a subscript to the edge connecting 
$\pi_1$ with $\pi_{2}$ in the Hasse diagram.
In Figure \ref{fig:n3k1}, the edges in the Hasse diagram have a pair of integers.
We omit the subscript since the layers of the edges are all the same in this case.

Suppose $\pi_1\lessdot\pi_2$ and $(\alpha,\beta)$ is a pair of integers which is 
assigned to the edge in the Hasse diagram.
We write this covering relation as
\begin{align*}
\pi_1\xrightarrow{(\alpha,\beta)}\pi_2.
\end{align*}
When we have 
\begin{align}
\label{eq:chainab}
\pi_0\xrightarrow{(\alpha_1,\beta_1)}\pi_{1}\xrightarrow{(\alpha_2,\beta_2)}\pi_2
\xrightarrow{(\alpha_3,\beta_3)}\ldots\xrightarrow{(\alpha_m,\beta_m)}\pi_m,
\end{align}
we abbreviate Eq. (\ref{eq:chainab})
\begin{align}
\label{eq:abbchain}
\pi_0:\genfrac{(}{)}{0pt}{}{\alpha_1}{\beta_1}\genfrac{(}{)}{0pt}{}{\alpha_2}{\beta_2}
\ldots\genfrac{(}{)}{0pt}{}{\alpha_m}{\beta_m}.
\end{align}
We sometimes omit $\pi_0$ in Eq. (\ref{eq:abbchain}) if it is obvious from the context 
or we emphasize only labels $(\alpha_{i},\beta_{i})$. 
When we specify the layer of a label, we put a subscript to the right of the label,
namely, we write $\genfrac{(}{)}{0pt}{}{\alpha_i}{\beta_i}_{l}$ where $l$ is the 
label of a layer.
We call Eq. (\ref{eq:abbchain}) a {\it chain from the partition $\pi_0$}, or 
simply a {\it chain}.

In Figure \ref{fig:n3k1}, a maximal chain from $\hat{0}=1/2/3$ to $123$ has two edges 
and note that $\beta$ in a label is either $2$ or $3$ and $2$ and $3$ appear exactly 
once in the chain. This observation is easily generalized as the next lemma.
\begin{lemma}
\label{lemma:betaperm}
Suppose $k=1$. 
In a maximal chain from $\hat{0}$ to $12\ldots n$, $\beta$ in the label $(\alpha,\beta)$ of the edges 
in the Hasse diagram takes a value in $\{2,3,\ldots,n\}$ exactly once.
\end{lemma}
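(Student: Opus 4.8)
The plan is to track how the quantity $\beta=\max(\min(B_{i_1}),\min(B_{i_2}))$ behaves along a maximal chain when $k=1$. In this case the lattice is simply $\mathcal{P}_n^{(1)}=\Pi_n$, so a maximal chain from $\hat 0=1/2/\ldots/n$ to $\hat 1=12\ldots n$ has exactly $n-1$ edges, each of which merges two blocks into one. Thus there are $n-1$ labels $(\alpha_i,\beta_i)$ to account for, and I claim each of the $n-1$ values $2,3,\ldots,n$ occurs as a $\beta_i$ exactly once. Since there are $n-1$ edges and $n-1$ possible values, it suffices to prove (a) that every label satisfies $\beta\in\{2,\ldots,n\}$ and (b) that no value is repeated; surjectivity then follows by counting.

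First I would establish the range. At any merge $B_{i_1}\cup B_{i_2}$, the two blocks are distinct and nonempty, so their minimums are distinct; hence $\beta=\max(\min B_{i_1},\min B_{i_2})$ is the larger of two distinct positive integers and is therefore at least $2$, and it is clearly at most $n$. This gives (a). The heart of the argument is (b), the injectivity. The clean way to phrase it is: for each block $B$ appearing at any stage of the chain, let $\min(B)$ be its minimal element; then an element $b\in\{2,\ldots,n\}$ occurs as a $\beta$-label precisely at the unique step where the block whose minimum is $b$ loses its status as a block-minimum. More precisely, I would argue that at the start every singleton $\{j\}$ has minimum $j$, so each $j\in[n]$ is a block-minimum of $\hat 0$; at the end only the full set with minimum $1$ survives. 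Each merge takes two blocks with minimums $a<b$ and produces a block with minimum $a$, so exactly one former block-minimum, namely the larger one $b$, ceases to be a block-minimum, and that $b$ is exactly the label $\beta$ of this edge.

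The key invariant to formalize is therefore: \emph{the set of block-minimums strictly decreases by one element at each step, and the removed element is the $\beta$-label of that edge.} Since we begin with $\{1,2,\ldots,n\}$ as the set of block-minimums and end with $\{1\}$, the multiset of removed elements over the chain is exactly $\{2,3,\ldots,n\}$, each appearing once. This simultaneously gives injectivity and surjectivity, completing the proof. I would probably present this as a short induction on the length of the chain, maintaining the claim that after the $j$-th edge the set of current block-minimums equals $\{1,\ldots,n\}$ minus $\{\beta_1,\ldots,\beta_j\}$.

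The main obstacle, though a mild one, is verifying that $\beta$ as defined in Equation (\ref{eq:alphabeta}) really is the block-minimum that gets eliminated rather than, say, $\alpha$. This is exactly the content of the definition: $\alpha=\min(B_{i_1}\cup B_{i_2})$ is the surviving minimum and $\beta=\max(\min B_{i_1},\min B_{i_2})$ is the one absorbed, so the bookkeeping matches. I expect no serious difficulty beyond being careful that the two merged blocks are always distinct (guaranteed by the covering relation, which merges two distinct blocks) so that their minimums are genuinely different and $\beta>\alpha$ holds strictly; this ensures the removed element is well-defined and never coincides with a surviving minimum.
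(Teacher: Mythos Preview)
Your proof is correct and rests on the same core observation as the paper: at each merge the larger of the two block-minimums is precisely the $\beta$-label, and this minimum never reappears as a block-minimum thereafter. The paper organizes this as an induction on $n$, peeling off the first edge $\hat 0\xrightarrow{(a,b)}\pi_1$ and identifying $\pi_1$ with the bottom element of $\mathcal{P}_{[n]\setminus\{b\}}^{(1)}$, whereas you instead maintain the invariant that after $j$ steps the current set of block-minimums is $\{1,\ldots,n\}\setminus\{\beta_1,\ldots,\beta_j\}$. Your formulation is arguably cleaner: it avoids the relabeling needed to invoke the inductive hypothesis on a smaller lattice, and it makes the bijection between edges and elements of $\{2,\ldots,n\}$ explicit in one stroke. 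The paper's version, on the other hand, foreshadows the pattern used later for $k\ge 2$, where reducing to a smaller instance is the natural move.
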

\begin{proof}
From the definition (\ref{eq:alphabeta}) of $\alpha$ and $\beta$, 
it is obvious that $\beta$ takes a value in $\{2,3,\ldots,n\}$. 
We show that the integers $\{2,3\ldots,n\}$ appear exactly once in $\beta$ 
by induction of $n$. 
Since we consider a maximal chain, we have 
\begin{align*}
\hat{0}\xrightarrow{(a,b)}\pi_1\xrightarrow{}\ldots\xrightarrow{} 12\ldots n,
\end{align*}
where $\rho(\pi_1)=1$. Since $\hat{0}\xrightarrow{(a,b)}\pi_1$ and 
$\pi_1$ contains a block consisting of $a$ and $b$, $b$ does not 
appear in the labels of the chain from $\pi_1$ to $12\ldots n$.
The partition $\pi_1$ can be regarded as a minimal element in $[n]\setminus\{b\}$ by 
regarding a block consisting of $a$ and $b$ as a block consisting of only $a$.
This does not effect the labels in a maximal chain from $\pi_1$ to $12\ldots n$.
Then, we consider a maximal chain from $\pi_1$ to $12\ldots n$.
By induction assumption, the integers in $[n]\setminus\{1,b\}$ appears exactly once 
in the label $\beta$. Since $b$ appears in the chain from $\hat{0}$ to $\pi_1$,
the integers in $[2,n]$ appear exactly once as a label $\beta$ in the maximal chain.
\end{proof}

The integer $\alpha$ in a label takes a value in $\{1,2,\ldots,n-1\}$, 
and its multiplicity can be greater than one.

\begin{remark}
In Figure \ref{fig:n3k1}, note that we have two commutative labels: 
\begin{align*}
\hat{0}:\genfrac{(}{)}{0pt}{}{1}{2}\genfrac{(}{)}{0pt}{}{1}{3}, \quad
\hat{0}:\genfrac{(}{)}{0pt}{}{1}{3}\genfrac{(}{)}{0pt}{}{1}{2}.
\end{align*}
However, the sequence of labels
\begin{align}
\label{eq:1223}
\hat{0}:\genfrac{(}{)}{0pt}{}{1}{2}\genfrac{(}{)}{0pt}{}{2}{3},
\end{align}
is not allowed. 
The non-admissible sequence of labels as in Eq. (\ref{eq:1223}) play a role 
when we define a covering relation for general $k\ge2$.
\end{remark}

Below, we define a covering relation for a general $k\ge2$.
We generalize the construction in the case of $k=1$ to $k\ge2$.

Let $\pi^{(k)}_i\in\mathcal{P}_{n}^{(k)}$, $i=1,2$, and 
$\rho(\pi_2^{(k)})=\rho(\pi_{1}^{(k)})+1$. 
A pair of the covering relation $\pi^{(k)}_1\lessdot\pi^{(k)}_2$ and the label of integers $(\alpha,\beta)_{l}$, $l\in[k]$,
is admissible if they satisfy the following conditions.
\begin{enumerate}[($\diamondsuit$1)]
\item Let $\nu^{(l)}$ be the partition of the $l$-th layer of $\pi^{(k)}_1$.
Then, 
\begin{align*}
\nu^{(l)}:\genfrac{(}{)}{0pt}{}{\alpha}{\beta}_{l}
\end{align*}
is well-defined if it satisfies the following two conditions:  
\begin{enumerate}
\item The points $\alpha$ and $\beta$ belong to different blocks in the first layer (or equivalently 
in the circular presentation of $\pi^{(k)}_{1}$), 
and there is no label $\beta'<\beta$ which belongs to the same block as $\beta$.
\item In the circular presentation of $\nu^{(l)}$, the points $\alpha$ and $\beta$ 
belong to different blocks in $\nu^{(l)}$, and $\alpha$ and $\beta$ are the least labels 
in each block.
\end{enumerate}
In the circular presentation, we connect $\alpha$ with $\beta$ by a line labeled $l$. 
\item We add lines with label $l''\le l$ in such a way that the labels of lines are compatible with 
the lines with labels $l'\ge l$.
\end{enumerate}

\begin{remark}
From the definition of covering relation above, it is clear that the function $\rho$
is a rank function on $\mathcal{P}_{n}^{(k)}$.
From the condition ($\diamondsuit1$), if $\pi\lessdot\pi'$, the number of blocks 
in the first layer is decreased by one. 
As in Remark \ref{remark:rho}, it is enough to focus on the first layer to view 
$\rho$ as a rank function.
\end{remark}

\begin{example}
\begin{enumerate}
\item
The condition ($\diamondsuit2$) implies that 
\begin{align*}
13/24\xrightarrow{(1,2)_l}(12)^l34,
\end{align*}
where $1\le l\le k$.
However, the weighted partition $(14)^{l}(23)^{l'}$ with $2\le l,l'$ cannot 
be obtained from $13/24$ by $(1,2)_{p}$ with some $p$.
\item The chain 
\begin{align*}
13/24:\genfrac{(}{)}{0pt}{}{2}{3}_{l}, \qquad 1\le l\le k,
\end{align*}
is not admissible, since the block containing $13$ in $13/24$ violates 
the condition (a) in ($\diamondsuit1$).
\item Let $(n,k)=(4,2)$ and $\pi_1:=1(23)^2/4$. 
We have a unique chain from $\pi_1$ to $\pi_{2}:=1(234)^2$.
It is 
\begin{align*}
\pi_1\xrightarrow{(2,4)_2}\pi_2,
\end{align*}
To obtain $\pi_2$, we first connect two integers $2$ and $4$ by a line 
with a label $2$. By the condition ($\diamondsuit2$), we connect  two integers 
$3$ and $4$ by a line with a label $2$. 
As a result, we have a block $(234)^{l}$. 
Similarly, the integer $1$ is connected to $2$ and $3$ by a line with a label $1$ in 
$\pi_{1}$. Since the integer $4$ is in the second layer, we have to connect 
$1$ and $4$ by a line with a label $1$.
Then, we obtain $\pi_2=1(234)^2$.

Note that a chain $\pi_1\xrightarrow{(3,4)_2}\pi_2$ is not admissible 
since it violates the condition (b) in ($\diamondsuit1$) (we have a sequence of labels 
identical to Eq. (\ref{eq:1223}) in the second layer).
\end{enumerate}
\end{example}

\begin{example}
\label{ex:nk32}
When $(n,k)=(3,2)$, we have two sublattices which are isomorphic to the lattice in the case
of $(n,k)=(3,1)$. In these two lattices, the layers of the labels of all the edges in the Hasse diagram
are either one or two. They are 
\begin{gather*}
\hat{0}\xrightarrow{(1,2)_k}3/(12)^k\xrightarrow{(1,3)_{k}}(123)^{k}, \quad
\hat{0}\xrightarrow{(1,3)_k}2/(13)^k\xrightarrow{(1,2)_{k}}(123)^{k}, \quad
\hat{0}\xrightarrow{(2,3)_k}(23)^k/1\xrightarrow{(1,2)_{k}}(123)^{k},
\end{gather*}
where $k\in\{1,2\}$.
We have seven maximal chains which consist of labels with different layers.
All these seven maximal chains are as follows:
\begin{gather*}
\hat{0}\xrightarrow{(1,3)_1}13/2\xrightarrow{(1,2)_2}(12)^23, \quad
\hat{0}\xrightarrow{(2,3)_1}1/23\xrightarrow{(1,2)_2}(12)^23, \quad 
\hat{0}\xrightarrow{(1,2)_2}(12)^2/3\xrightarrow{(1,3)_1}(12)^23, \\
\hat{0}\xrightarrow{(1,2)_1}12/3\xrightarrow{(1,3)_2}(13)^22, \quad
\hat{0}\xrightarrow{(1,3)_2}(13)^2/2\xrightarrow{(1,2)_1}(13)^22, \\
\hat{0}\xrightarrow{(1,2)_{1}}12/3\xrightarrow{(2,3)_{2}}1(23)^2, \quad 
\hat{0}\xrightarrow{(2,3)_{2}}1/(23)^2\xrightarrow{(1,2)_{1}}1(23)^2, 
\end{gather*}
where $\hat{0}=1/2/3$. In total, we have thirteen maximal chains in $\mathcal{P}_{3}^{(2)}$.
\end{example}

The following proposition is an extension of Lemma \ref{lemma:betaperm} 
to the case of $k\ge2$.
\begin{prop}
Let $\pi\in\mathcal{P}^{(k)}_{n}$ be a weighted partition of rank $n-1$.
Consider the chain from the minimum element $1/2/\ldots/n$ to $\pi$.
The label $\beta$ of the labels$ (\alpha,\beta)_{l}$ of the edges in the Hasse diagram 
takes a value in $\{2,3,\ldots,n\}$ exactly once. 
\end{prop}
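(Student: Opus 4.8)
The plan is to prove this by induction on $n$, generalizing the argument of Lemma \ref{lemma:betaperm} to the setting of $k$ layers. The statement concerns a maximal chain (i.e., a chain of length $n-1$ from $\hat{0}=1/2/\ldots/n$ to a weighted partition $\pi$ of rank $n-1$), so each of the $n-1$ covering steps carries a label $(\alpha,\beta)_{l}$, and I must show that the multiset of $\beta$-values is exactly $\{2,3,\ldots,n\}$, each occurring once.

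The essential observation is that the values of $\alpha$ and $\beta$ are defined in Eq. (\ref{eq:alphabeta}) purely in terms of the \emph{first layer}: at a covering step, two blocks $B_{i_1},B_{i_2}$ of the first layer merge, and $\beta=\max(\min(B_{i_1}),\min(B_{i_2}))$. By the Remark following the covering relation and Remark \ref{remark:rho}, the rank function $\rho$ and the merging behavior depend only on the first layer, so the first layers of the weighted partitions along the chain form a maximal chain in the ordinary partition lattice $\mathcal{P}_{n}^{(1)}$. Thus the layer label $l$ of each edge is irrelevant to the value of $\beta$, and the $\beta$-values are determined entirely by the underlying first-layer chain. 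This reduces the statement to Lemma \ref{lemma:betaperm} applied to that first-layer chain.

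Concretely, first I would argue that projecting each weighted partition in the chain onto its first layer yields a sequence of set partitions in which consecutive terms satisfy $\nu_{i}\lessdot\nu_{i+1}$ in $\mathcal{P}_{n}^{(1)}$: by condition ($\diamondsuit1$), a single covering step in $\mathcal{P}_{n}^{(k)}$ merges exactly two first-layer blocks into one, decreasing the number of first-layer blocks by one, so each step is a genuine cover in the partition lattice. Since the original chain has length $n-1$ and begins at the all-singletons partition, its projection is a maximal chain from $\hat{0}$ to $12\ldots n$ in $\mathcal{P}_{n}^{(1)}$. The label $(\alpha,\beta)$ assigned to each edge of the $\mathcal{P}_{n}^{(k)}$-chain coincides (via Eq. (\ref{eq:alphabeta})) with the label assigned to the corresponding edge of the projected chain, because both are computed from the same two merging first-layer blocks.

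With this reduction in place, Lemma \ref{lemma:betaperm} immediately gives that $\beta$ ranges over $\{2,3,\ldots,n\}$ exactly once, completing the proof. The main obstacle I anticipate is making the projection argument fully rigorous: I must verify that condition ($\diamondsuit1$)(a), which forbids $\beta$ from being non-minimal in its first-layer block, together with ($\diamondsuit1$)(b), guarantees that the first-layer merge is a legitimate cover and that the $(\alpha,\beta)$ computed in the two settings agree. In particular I should check that the higher-layer data (the subscript $l$ and the compatibility condition ($\diamondsuit2$)) never alters which first-layer blocks merge or which element becomes $\beta$; once that independence is established, the inductive machinery of Lemma \ref{lemma:betaperm} transfers verbatim.
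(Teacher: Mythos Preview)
Your reduction strategy is sound and leads to a correct proof, but one claim needs correction: the full label $(\alpha,\beta)$ does \emph{not} in general coincide with the label of the projected first-layer cover. In the paper's own example $1(23)^2/4\xrightarrow{(2,4)_2}1(234)^2$, the two first-layer blocks are $\{1,2,3\}$ and $\{4\}$, so the projected $k=1$ label would be $(1,4)$, not $(2,4)$. Condition ($\diamondsuit1$)(b) only forces $\alpha$ to be the minimum of its $l$-th layer block, which can be strictly larger than the minimum of its first-layer block. What \emph{is} true, and is all you need, is that $\beta$ coincides: condition ($\diamondsuit1$)(a) says $\beta$ is the minimum of its first-layer block, and since $\alpha$'s first-layer block has minimum $\le\alpha<\beta$, we get $\beta=\max(\min B_{i_1},\min B_{i_2})$ for the two merging first-layer blocks. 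You already flag exactly this as the point to verify, so the fix is just to drop the overclaim about $\alpha$.

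The paper's own proof is more direct: rather than projecting and invoking Lemma~\ref{lemma:betaperm}, it argues in two lines that each $m\in\{2,\ldots,n\}$ must appear as $\beta$ at least once (the singleton $\{m\}$ must eventually merge with a block of smaller minimum) and at most once (after that merge $m$ is never again a block minimum, hence never again eligible as $\beta$). Your projection argument and the paper's direct argument rest on the same observation---that $\beta$ is governed entirely by first-layer block minima---so the difference is organizational rather than mathematical. Your version has the advantage of making the reduction to the $k=1$ case explicit; the paper's version avoids the need to check that the projection is a genuine maximal chain.
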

\begin{proof}
Recall the condition ($\diamondsuit1$) in the definition of the covering relation.
Then, the label $\beta$ takes the value in $\{2,3,\ldots,n\}$ at least once 
since we have to choose $m\in\{2,3,\ldots,n\}$ as $\beta$ to merge two blocks 
into one. 
Similarly, $\beta$ takes the value in $\{2,3,\ldots,n\}$ at most once 
since the value $\beta$ is the maximum of the least element of two blocks 
from Eq. (\ref{eq:alphabeta}). 
From these observations, $\beta$ takes a value in $\{2,3,\ldots,n\}$ exactly once.
\end{proof}

The poset $(\mathcal{P}_{n}^{(k)},\le)$ has the least element 
$\hat{0}=1/2/\ldots/n$.
However, it has several greatest elements which are incomparable with each other.
Recall that the poset is graded by the rank function $\rho$ as in Definition \ref{defn:rk}
There are several weighted partitions whose rank is $n-1$.
We add the element $\hat{1}$ whose rank is $n$ to the poset $(\mathcal{P}_{n}^{(k)},\le)$
to obtain the poset $(\mathcal{L}^{(k)}_{n},\le)$.

\begin{defn}
The covering relation for $\hat{1}\in\mathcal{L}_{n}^{(k)}$ is defined by 
\begin{align*}
\pi^{(k)}\xrightarrow{(1,n)_{k}}\hat{1},
\end{align*}
where $\pi^{(k)}$ satisfies $\rho(\pi^{(k)})=n-1$.
\end{defn}
Note that we have the same labeling on the edges connecting $\pi^{(k)}$'s 
with $\hat{1}$ in the Hasse diagram.
We define the rank of $\hat{1}$ by $\rho(\hat{1})=n$.
This new poset has the least and greatest elements.

\begin{defn}
We denote by $(\mathcal{L}_{n}^{(k)},\le)$ the poset obtained from 
$(\mathcal{P}_{n}^{(k)},\le)$ by adding $\hat{1}$.
\end{defn}

For example, to obtain the lattice $\mathcal{L}_{3}^{(2)}$ from $\mathcal{P}_{3}^{(2)}$, 
we extend the maximal chains in Example \ref{ex:nk32} by adding 
$\xrightarrow{(1,3)_{2}}\hat{1}$ to their right ends. We have $13$ maximal chains from $\hat{0}$ to $\hat{1}$.

By definition, the rank function of $\mathcal{P}_{n}^{(k)}$ takes a value in $[0,n-1]$, 
however, the rank function of $\mathcal{L}_{n}^{(k)}$ takes a value in $[0,n]$.
We will show that the poset $(\mathcal{L}_{n}^{(k)},\le)$ is 
a graded and bounded lattice.

We first briefly recall the properties of a lattice and a poset.
A lattice is a poset where any two elements $x$ and $y$ has 
a join (i.e. least upper bound) $x\vee y$ and 
a meet (i.e. greatest lower bound) $x\wedge y$.
We call an element $x$ a minimal element if there is no element $y$ such that $y<x$.
An {\it atom} is a cover of a minimal element.
A lattice (or poset) is {\it atomistic} if every element is the join 
of a set of some atoms. 
A graded lattice is {\it semimodular} if its rank function $\rho$ satisfies 
$\rho(x)+\rho(y)\ge \rho(x\vee y)+\rho(x\wedge y)$ for any elements $x$ and $y$.
A lattice is called a {\it geometric lattice} if it is a 
finite atomistic semimodular lattice.

Given two elements $x,y\in\mathcal{P}_{n}^{(1)}$, we define a join $x\vee y$ 
and a meet $x\wedge y$ as follows.
Let $n-r$ and $n-r'$ be the ranks of $x$ and $y$ respectively.
Then, $x$ consists of $r$ blocks $B_{i}$, $1\le i\le r$, and 
$y$ consists of $r'$ blocks $B'_{j}$, $1\le j\le r'$.

We say that two blocks $B_{i}$ and $B'_{j}$ are crossing if 
$B_{i}\cap B'_{j}\neq\emptyset$ and denote it by $B_{i}\leftrightarrow B'_{j}$.
More generally, we write $C\leftrightarrow C'$ where $C$ and $C'$ are 
blocks in either $x$ or $y$, if 
there exists a sequence of crossing blocks
\begin{align*}
C=C_{1}\leftrightarrow C_{2}\leftrightarrow C_{3}\leftrightarrow \ldots
\leftrightarrow C_{m}=C',
\end{align*}
where $m\ge2$ and $C_{i}$'s are the blocks in either $x$ or $y$.
Suppose that $B_{i}\leftrightarrow B'_{j}$.
Let $I(i,j)$ be the set of pairs of integers $(i',j')\in[1,r]\times[1,r']$ such that
\begin{align*}
B_{i}\leftrightarrow B'_{j'}\leftrightarrow B_{i'},
\end{align*}
that is, $B_{i}, B_{i'}$ and $B'_{j'}$ are crossing.

By construction of $I(i,j)$, if $(i',j')\in I(i,j)$, then $I(i,j)=I(i',j')$ as a set of 
pairs of integers.
Let $[[i,j]]$ be a representative of $I(i,j)$.
Then, we define 
\begin{align}
\label{eq:defBcup}
B^{\cup}_{[[i,j]]}:=\bigcup_{(i',j')\in I(i,j)}B_{i'}\cup B'_{j'}.
\end{align}
By definition, we have $B_{i'},B'_{j'}\subseteq B^{\cup}_{[[i,j]]}$ if 
$(i',j')\in I(i,j)$.

Similarly, we define the intersection of $B_{i}$ and $B'_{j}$ as 
\begin{align*}
B^{\cap}_{i,j}:=B_{i}\cap B'_{j}.
\end{align*}
By definition, we have $B^{\cap}_{i,j}\subseteq B_{i}$ and $B^{\cap}_{i,j}\subseteq B'_{j}$.

We define a join and a meet for two elements $x$ and $y$ as follows.
\begin{defn}
The join $x\vee y$ is a partition consisting of the non-empty blocks $B^{\cup}_{[[i,j]]}$.
Similarly, the meet $x\wedge y$ is a partition consisting of the non-empty blocks $B^{\cap}_{i,j}$. 
\end{defn}

The definition above implies that $x$ and $y$ are refinements of a join $x\vee y$, 
and a meet $x\wedge y$ is a refinement of $x$ and $y$.

\begin{example}
Let $x=3/245/16$ and $y=456/2/13$ be two partitions in $\mathcal{P}_{6}^{(1)}$.
We put a prime on the elements of the blocks in $y$.
We have a sequence of crossing blocks
\begin{align*}
2'\leftrightarrow 245\leftrightarrow 4'5'6'\leftrightarrow 16 
\leftrightarrow 1'3'\leftrightarrow 3.
\end{align*}
This sequence contains all the integers in $[6]$.
Therefore, we have the join $x\vee y=123456$.

We have three blocks in $x$ and $y$. 
By calculating the intersections of blocks, we obtain the meet $x\wedge y=6/45/3/2/1$.
\end{example}

\begin{lemma}
\label{lemma:Pn1sm}
The poset $\mathcal{P}_{n}^{(1)}$ is semimodular.
\end{lemma}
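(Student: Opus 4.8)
The plan is to reduce semimodularity to a submodularity inequality for the block-counting function, and then to recognize that inequality as the elementary fact that a graph on $V$ vertices with $c$ connected components has at least $V-c$ edges.

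First I would rewrite the rank in terms of the number of blocks. Writing $b(z)$ for the number of blocks of a partition $z$, Definition \ref{defn:rk} gives $\rho(z)=n-b(z)$. Substituting this into the semimodular inequality $\rho(x)+\rho(y)\ge\rho(x\vee y)+\rho(x\wedge y)$ and cancelling $2n$ from both sides, the claim becomes equivalent to
\begin{align*}
b(x\vee y)+b(x\wedge y)\ge b(x)+b(y).
\end{align*}
Thus it suffices to prove that the block-count is submodular on $\mathcal{P}_{n}^{(1)}$.

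Next I would introduce the bipartite crossing graph $G=G(x,y)$: its vertex set is the disjoint union of the blocks $B_{1},\ldots,B_{r}$ of $x$ and the blocks $B'_{1},\ldots,B'_{r'}$ of $y$, and we draw an edge between $B_{i}$ and $B'_{j}$ exactly when $B_{i}\leftrightarrow B'_{j}$, i.e.\ when $B_{i}\cap B'_{j}\neq\emptyset$. Each of the three block-counts can then be read off from $G$. The number of vertices is $b(x)+b(y)=r+r'$. The meet $x\wedge y$ consists of the non-empty intersections $B^{\cap}_{i,j}=B_{i}\cap B'_{j}$, which are in bijection with the edges of $G$, so $b(x\wedge y)=|E(G)|$. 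Finally, by the definition of $B^{\cup}_{[[i,j]]}$ in Eq.~(\ref{eq:defBcup}), the blocks of the join are indexed by the equivalence classes of the relation $\leftrightarrow$, which are precisely the connected components of $G$; hence $b(x\vee y)=c(G)$.

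With these identifications the target inequality reads $c(G)+|E(G)|\ge r+r'$, which is exactly the statement $|E(G)|\ge V-c(G)$ (a spanning forest of $G$ has $V-c$ edges, and $G$ has at least that many). The only genuine content beyond this bookkeeping — and the step I would check most carefully — is the identification of the join blocks with the connected components: one must verify directly from the definition of $I(i,j)$ and $B^{\cup}_{[[i,j]]}$ that two blocks lie in a common join block if and only if their vertices are joined by a path in $G$, since a path in the bipartite graph alternates between blocks of $x$ and blocks of $y$ and therefore realizes a chain $C_{1}\leftrightarrow C_{2}\leftrightarrow\cdots\leftrightarrow C_{m}$. Once that correspondence is established, the forest bound yields the inequality immediately, completing the proof.
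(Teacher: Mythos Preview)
Your proof is correct and is essentially the paper's own argument, recast in cleaner graph-theoretic language. The paper's key inequality (\ref{eq:BcupI}) is exactly your spanning-tree bound $|E|\ge V-1$ stated for a single connected component of your bipartite crossing graph $G$, and the summation via (\ref{eq:BiBj1})--(\ref{eq:BiBj4}) is precisely your global forest inequality $|E(G)|\ge V-c(G)$.
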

\begin{proof}
To show $\mathcal{P}_{n}^{(1)}$ is semimodular, it is enough to show that 
$\rho(x)+\rho(y)\ge \rho(x\vee y)+\rho(x\wedge y)$.
By definition of $B^{\cup}_{[[i,j]]}$, the ranks of the join $x\vee y$ and the meet $x\wedge y$
are given by 
\begin{align*}
\rho(x\vee y)&=n-\#\{[[i,j]] | B^{\cup}_{[[i,j]]}\neq\emptyset\}, \\
\rho(x\wedge y)&=n-\#\{(i,j) | B^{\cap}_{i,j}\neq\emptyset\}.
\end{align*}
From Eq. (\ref{eq:defBcup}) and the construction of $B^{\cup}_{[[i,j]]}$, we have 
\begin{align}
\label{eq:BcupI}
\#\{i' | (i',j')\in I(i,j)\}+\#\{j' | (i',j')\in I(i,j)\}
\le 1+ \#\{(i',j')\in I(i,j)| B^{\cap}_{i',j'}\neq\emptyset \},
\end{align}
for any pair $[[i,j]]\in[1,r]\times[1,r']$ if $B_{i}$ and $B'_{j}$ are crossing.
The left hand side of Eq. (\ref{eq:BcupI}) simply counts the numbers of blocks which
appear in the sequence of crossing blocks containing $B_{i}$ and $B'_{j}$.
If the blocks $B_{i'}$ and $B'_{j'}$ are crossing, we have $B^{\cap}_{i'j'}\neq\emptyset$.
If a sequence of crossing blocks does not contain a loop, the equality holds 
in Eq. (\ref{eq:BcupI}). However, if a sequence contains a loop, the equality does not hold.

Note that we have
\begin{align}
\label{eq:BiBj1}
&\sum_{[[i,j]]}\#\{i' | (i',j')\in I(i,j)\}=r, \\
\label{eq:BiBj2}
&\sum_{[[i,j]]}\#\{j' | (i',j')\in I(i,j)\}=r', \\
\label{eq:BiBj3}
&\sum_{[[i,j]]}1=\#\{[[i,j]] | B^{\cup}_{[[i,j]]}\neq\emptyset \}, \\
\label{eq:BiBj4}
&\sum_{[[i,j]]}\#\{(i',j')\in I(i,j)| B^{\cap}_{i',j'}\neq\emptyset \}
=\#\{(i',j')| B^{\cap}_{i',j'}\neq\emptyset \}.
\end{align}
Equation (\ref{eq:BiBj1}) (resp. Eq. (\ref{eq:BiBj2})) follows from the fact that the left hand side 
simply counts the number of blocks in $x$ (resp. $y$).
Equation (\ref{eq:BiBj3}) counts the number of representatives $[[i,j]]$ which is equal the number of 
blocks in $x\vee y$.
Similarly, Eq. (\ref{eq:BiBj4}) counts the number of blocks in $x\wedge y$.
From these observations, we have 
\begin{align*}
\rho(x\vee y)+\rho(x\wedge y)
&=2n-\#\{[[i,j]] | B^{\cup}_{[[i,j]]}\neq\emptyset\}-\#\{(i,j) | B^{\cap}_{i,j}\neq\emptyset\}, \\
&=2n-\sum_{[[i,j]]}\left(1+\#\{(i',j')\in I(i,j)|B_{i',j'}^{\cap}\neq\emptyset\}\right), \\
&\le 2n-\sum_{[[i,j]]}\left(\#\{i'|(i',j')\in I(i,j)\}+\#\{j'|(i',j')\in I(i,j)\}\right), \\	
&=2n-r-r', \\
&=\rho(x)+\rho(y), 
\end{align*}
where we have used Eq. (\ref{eq:BcupI}) in the second line.
This completes the proof.
\end{proof}

\begin{example}
We consider Eq. (\ref{eq:BcupI}) for $x=12/34$ and $y=13/24$.
We have a following sequence of crossing blocks for $x$ and $y$:
\begin{align*}
\tikzpic{-0.5}{
\node(1)at(0,0){$12$};
\node(2)at(1.2,0){$1'3'$};
\node(3)at(0,-1.2){$2'4'$};
\node(4)at(1.2,-1.2){$34$};
\draw[<->](1)--(2);\draw[<->](1)--(3);\draw[<->](2)--(4);\draw[<->](3)--(4);
}
\end{align*}
where a block consisting of integers (resp. primed integers) is a block of $x$ (resp. $y$).
In this case, we have $\#\{i' | (i',j')\in I(i,j)\}+\#\{j' | (i',j')\in I(i,j)\}=4$,
and $1+ \#\{(i',j')\in I(i,j)| B^{\cap}_{i',j'}\neq\emptyset \}=5$.
\end{example}

\begin{example}
Let $x=12/34$ and $y=13/24$.
The ranks of $x$ and $y$ are given by $\rho(x)=\rho(y)=2$.
Then, we have $x\vee y=1234$ and $x\wedge y=1/2/3/4$, and 
$\rho(x\vee y)=3$ and $\rho(x\wedge y)=0$.
This implies $4=\rho(x)+\rho(y)>\rho(x\vee y)+\rho(x\wedge y)=3$.
\end{example}

Fix $k\ge2$ and let $x,y\in\mathcal{P}_{n}^{(k)}$.
Given $x$, we denote by $x^{(l)}$, $1\le l\le k$, the weighted partition in the $l$-th 
layer of $x$. Note that $x^{(l)}$ is isomorphic to a partition in $\mathcal{P}_{n}^{(1)}$.
We define 
\begin{align*}
(x\vee y)^{(l)}:=x^{(l)}\vee y^{(l)}, \qquad
(x\wedge y)^{(l)}:=x^{(l)}\wedge y^{(l)},
\end{align*}
where $1\le l\le k$.

\begin{lemma}
\label{lemma:Pnksm}
The poset $\mathcal{P}_{n}^{(k)}$ is semimodular.
\end{lemma}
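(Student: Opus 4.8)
The plan is to reduce the semimodular inequality $\rho(x)+\rho(y)\ge\rho(x\vee y)+\rho(x\wedge y)$ on $\mathcal{P}_{n}^{(k)}$ to the corresponding inequality on $\mathcal{P}_{n}^{(1)}$, which is already furnished by Lemma \ref{lemma:Pn1sm}. The crucial observation, recorded in Definition \ref{defn:rk} and Remark \ref{remark:rho}, is that the rank function $\rho$ depends only on the first layer of a weighted partition. Writing $x^{(1)}$ for the first-layer partition of $x$, regarded as an element of $\mathcal{P}_{n}^{(1)}$, the number of first-layer blocks of $x$ equals the number of blocks of $x^{(1)}$, so $\rho(x)=\rho(x^{(1)})$, and likewise $\rho(y)=\rho(y^{(1)})$.

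First I would record that join and meet act layer by layer. By the defining formulas $(x\vee y)^{(l)}=x^{(l)}\vee y^{(l)}$ and $(x\wedge y)^{(l)}=x^{(l)}\wedge y^{(l)}$, specialising to $l=1$ yields $(x\vee y)^{(1)}=x^{(1)}\vee y^{(1)}$ and $(x\wedge y)^{(1)}=x^{(1)}\wedge y^{(1)}$. Combining this with the fact that $\rho$ sees only the first layer gives
\begin{align*}
\rho(x\vee y)=\rho(x^{(1)}\vee y^{(1)}), \qquad \rho(x\wedge y)=\rho(x^{(1)}\wedge y^{(1)}).
\end{align*}
After substituting these equalities, the desired inequality for $\mathcal{P}_{n}^{(k)}$ becomes exactly
\begin{align*}
\rho(x^{(1)})+\rho(y^{(1)})\ge\rho(x^{(1)}\vee y^{(1)})+\rho(x^{(1)}\wedge y^{(1)}),
\end{align*}
which is the assertion of Lemma \ref{lemma:Pn1sm} applied to the two first-layer partitions $x^{(1)},y^{(1)}\in\mathcal{P}_{n}^{(1)}$. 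This establishes the inequality, and hence semimodularity.

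The step requiring genuine care is not the inequality but the prior check that the layer-wise formulas really do define the join and meet in $\mathcal{P}_{n}^{(k)}$. One must verify that the collection $\{(x\vee y)^{(l)}\}_{l}$ is a valid weighted partition, i.e. that the layers remain nested, $(x\vee y)^{(1)}\supseteq(x\vee y)^{(2)}\supseteq\cdots$, and satisfy condition ($\star$), and dually for the meet. The nesting for the join follows from monotonicity: since $x^{(l)}$ refines $x^{(l-1)}$ and $y^{(l)}$ refines $y^{(l-1)}$, the join $x^{(l)}\vee y^{(l)}$ refines $x^{(l-1)}\vee y^{(l-1)}$, so the layers of $x\vee y$ are nested as required, and one argues analogously for the meet. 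Once one confirms that these layer-wise operations are indeed the lattice operations (least upper bound and greatest lower bound), the reduction above is immediate, because $\rho$ is blind to all layers beyond the first; this blindness is precisely what makes the $k$-layer inequality literally the $k=1$ inequality applied to the first layers.
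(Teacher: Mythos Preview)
Your approach is essentially the same as the paper's: observe that $\rho(x)=\rho(x^{(1)})$, use the layer-wise definition $(x\vee y)^{(1)}=x^{(1)}\vee y^{(1)}$ and $(x\wedge y)^{(1)}=x^{(1)}\wedge y^{(1)}$, and invoke Lemma~\ref{lemma:Pn1sm}. The paper's proof is in fact a terser version of yours; your additional paragraph flagging the need to verify that the layer-wise formulas actually produce valid weighted partitions (nesting and condition~($\star$)) and realise the lattice operations is a point the paper passes over without comment.
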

\begin{proof}
The rank $\rho(x)$ of $x\in\mathcal{P}_{n}^{(k)}$ is equal to 
$\rho(x^{(1)})$ where $x^{(1)}$ is the partition of the first layer 
in the weighted partition $x$.
By Lemma \ref{lemma:Pn1sm}, it is easy to see that $\mathcal{P}_{n}^{(k)}$
is semimodular.
\end{proof}

The atoms $\nu\in\mathcal{P}_{n}^{(k)}$ is a weighted partition such that 
the points $i$ and $j$, $1\le i<j\le n$, 
are connected by a line with a label $1\le l\le k$ in the circular presentation.
We denote by $\nu=(i,j)_{k}$ the weighted partition $\nu$.
Since $\nu$ has a single line connecting points, the rank $\rho(\nu)=1$.
There are $kn(n-1)/2$ atoms in $\mathcal{P}_{n}^{(k)}$.

We summarize the properties of the posets $(\mathcal{P}_{n}^{(k)},\le)$ and 
$(\mathcal{L}_{n}^{(k)},\le)$.
\begin{prop}
\label{prop:PL}
\begin{enumerate}
\item
The poset $(\mathcal{P}_{n}^{(k)},\le)$ is a geometric lattice.
\item 
The poset $(\mathcal{L}_{n}^{(k)},\le)$ is a bounded graded lattice.
\end{enumerate}
\end{prop}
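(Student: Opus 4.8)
The plan is to prove the two statements in turn, reducing each to the defining axioms and leaning on the semimodularity already in hand.

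For part (1), recall that a geometric lattice is a finite atomistic semimodular lattice, so four properties must be checked. Finiteness is immediate, and semimodularity is exactly Lemma~\ref{lemma:Pnksm}. For the lattice structure I would work with the layer-wise operations $(x\vee y)^{(l)}=x^{(l)}\vee y^{(l)}$ and $(x\wedge y)^{(l)}=x^{(l)}\wedge y^{(l)}$, first confirming that they return genuine weighted partitions---i.e. the nesting $B_i^{(1)}\supseteq B_i^{(2)}\supseteq B_i^{(3)}$ and the condition $(\star)$ are preserved---and then that they coincide with the greatest lower bound and least upper bound for the order $\le$ generated by the covering relation. For the atomistic property I would show that every $\pi$ is the join of the atoms $(i,j)_l$ indexed by the labeled lines in the circular presentation of $\pi$: joining these single-line atoms rebuilds $\pi$ one layer at a time, just as in the classical case $k=1$ where $\mathcal{P}_n^{(1)}=\Pi_n$ is the familiar geometric partition lattice.

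For part (2), I would deduce the claim from part (1) together with the adjunction of $\hat1$. Boundedness is built in: $\hat0=1/2/\cdots/n$ is the least element and $\hat1$ is the greatest. For grading, observe that $\rho$ is a rank function on $\mathcal{P}_n^{(k)}$ with values $0,1,\dots,n-1$, that every maximal element of $\mathcal{P}_n^{(k)}$ has rank $n-1$ and is covered by $\hat1$, and that $\rho(\hat1)=n$; hence every maximal chain from $\hat0$ to $\hat1$ has length $n$, so the poset is graded. The lattice property is the real content, and the efficient route is to verify that $\mathcal{L}_n^{(k)}$ is a finite meet-semilattice possessing a top element: once all meets exist and $\hat1$ is present, every pair acquires a join via $x\vee y=\bigwedge\{z:\ z\ge x,\ z\ge y\}$, the set on the right being nonempty because $\hat1$ dominates everything. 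This reduces part (2) to the existence of meets, with $x\wedge y$ the layer-wise meet for $x,y\in\mathcal{P}_n^{(k)}$ and $x\wedge\hat1=x$.

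The main obstacle, shared by both parts, is matching the layer-wise operations to the order-theoretic meet and join for the order generated by the delicate covering relation $(\diamondsuit 1)$--$(\diamondsuit 2)$. The phenomenon to respect is that a cover always merges two blocks of the first layer, so comparable elements have different ranks; in particular two weighted partitions of equal rank that differ only in the higher layers are incomparable even when their lower layers agree. It follows that the layer-wise join need \emph{not} be an upper bound of its arguments---this is precisely why $\mathcal{P}_n^{(k)}$ possesses several incomparable maximal elements and why $\hat1$ must be adjoined: in $\mathcal{L}_n^{(k)}$ one has $x\vee y$ equal to the layer-wise join when that join already lies above both $x$ and $y$, and equal to $\hat1$ otherwise. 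I would therefore establish the meet statement first and most carefully, showing that the layer-wise meet is a lower bound by producing an explicit chain of covers from it up to each argument, and that it dominates every common lower bound by induction on rank using the characterization of $\le$. Once this meet-semilattice structure is secured, the join, the grading, and boundedness all follow formally.
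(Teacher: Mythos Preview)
Your instinct is right, and in fact you have put your finger on a genuine gap that the paper's own proof does not address. The paper's argument for (1) simply asserts finiteness, invokes Lemma~\ref{lemma:Pnksm} for semimodularity, writes $x=\bigvee_{(i,j)\in P(x)}(i,j)_{l(i,j)}$ for atomicity, and concludes; it never checks that the layer-wise $\vee$ is the least upper bound for the order generated by the covering relation $(\diamondsuit1)$--$(\diamondsuit2)$. Your observation that the layer-wise join ``need not be an upper bound of its arguments'' is exactly the failure point, and it is fatal to part (1) as stated, not merely an obstacle to the proof. Concretely, in $\mathcal{P}_{3}^{(2)}$ the two atoms $12/3$ and $(12)^{2}/3$ have no common upper bound whatsoever: by Example~\ref{ex:nk32} the rank-two elements above $12/3$ are $123$, $(13)^{2}2$, $1(23)^{2}$, while those above $(12)^{2}/3$ are $(123)^{2}$ and $(12)^{2}3$, and these two lists are disjoint. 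Hence $\mathcal{P}_{n}^{(k)}$ is not even a join-semilattice for $k\ge 2$, let alone a geometric lattice, so part (1) cannot be proved as written. You should not try to carry out your plan for (1); instead flag the counterexample.

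By contrast, your route to part (2) is sounder than the paper's. The paper deduces (2) from (1), which is now unavailable; your proposal to establish $\mathcal{L}_{n}^{(k)}$ as a finite meet-semilattice with $\hat{1}$, and then obtain joins formally via $x\vee y=\bigwedge\{z:z\ge x,\ z\ge y\}$, is the correct salvage. The remaining work is exactly what you identify: showing that the layer-wise meet $(x\wedge y)^{(l)}=x^{(l)}\wedge y^{(l)}$ really is the greatest lower bound for the order on $\mathcal{P}_{n}^{(k)}$. This is not automatic either, since (as you note) elements with the same first layer but different higher layers are incomparable, so you must check that any common lower bound $z$ of $x$ and $y$ satisfies $z^{(l)}\le x^{(l)}\wedge y^{(l)}$ in $\Pi_{n}$ for every $l$, and that the element assembled from these layer-wise meets is actually $\ge z$ in the covering order. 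That verification is the substantive content of (2).
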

\begin{proof}
(1). It is obvious that the poset $(\mathcal{P}_{n}^{(k)})$ is finite.
From Lemma \ref{lemma:Pnksm}, it is semimodular.
Let $x\in\mathcal{P}_{n}^{(k)}$. In the circular presentation of $x$, 
suppose that the points $i$ and $j$, $1\le i<j\le n$, belong to the 
$l(i,j)$-th layer. 
Let $P(x)$ be the set of pairs of points $(i,j)$ such that 
two points $i$ and $j$ are connected by a line in the $l(i,j)$-th layer in $x$.
Then, $x$ is expressed in terms of atoms:
\begin{align*}
x=\bigvee\limits_{(i,j)\in P(x)}(i,j)_{l(i,j)}.
\end{align*}
This implies that $\mathcal{P}_{n}^{(k)}$ is atomistic.
From these, $\mathcal{P}_{n}^{(k)}$ is a geometric lattice.

(2). Note that $\mathcal{L}_{n}^{(k)}$ is obtained from $\mathcal{P}_{n}^{(k)}$
by adding the maximum element $\hat{1}$.
From (1), $\mathcal{L}_{n}^{(k)}$ is a bounded graded lattice with 
the rank function $\rho$.
This completes the proof.
\end{proof}

\begin{remark}
The poset $\mathcal{L}_{n}^{(k)}$ is not geometric since the greatest element $\hat{1}$
cannot be obtained as a join of atoms.
\end{remark}

\section{\texorpdfstring{$EL$}{EL}-labeling, M\"obius function and order homology}
\label{sec:EL}
We briefly recall the definition of the $EL$-labeling on the Hasse diagram 
of a lattice and some basic properties following \cite{Bjo80,BjoGarSta82}.	

We first introduce $EL$-labeling of a lattice.
For any finite poset $P$, we denote its covering relation 
by $C(P):=\{(x,y)\in P\times P| x\lessdot y\}$.
Then, an {\it edge-labeling} of $P$ is a map 
$\lambda:C(P)\rightarrow \Lambda$ where $\Lambda$ is a poset.
We consider a unrefinable chain 
$c:x_{0}\lessdot x_{1}\lessdot\ldots\lessdot x_{n}$.
An edge-labeling $\lambda$ is called {\it rising}
if $\lambda(x_{i-1},x_i)\le \lambda(x_i,x_{i+1})$  for
$1\le i\le n-1$.

\begin{defn}
\label{defn:EL}
Let $\lambda:C(P)\rightarrow\Lambda$ be an edge-labeling of a graded 
poset $P$.
The edge-labeling $\lambda$ is said to be an {\it $R$-labeling} 
if it satisfies the following condition:
\begin{enumerate}[(a)]
\item For any interval $[x,y]$ of $P$, there exists a unique 
rising unrefinable chain $c$.
\end{enumerate}
The edge-labeling $\lambda$ is said to be an {\it $EL$-labeling} if 
it satisfies 
\begin{enumerate}[(i)]
\item $\lambda$ is an $R$-labeling.
\item Suppose that $c:x=x_{0}\lessdot x_{1}\lessdot\ldots\lessdot x_{n}=y$ 
is the unique rising unrefinable chain for an interval $[x,y]$.
If $x\lessdot z\le y$, $z\neq x_{1}$, then $\lambda(x,x_1)<\lambda(x,z)$.
\end{enumerate}
\end{defn}

\begin{defn}
A poset is said to be {\it lexicographically shellable} (or $EL$-shellable)
if it is graded and admits an $EL$-labeling.
\end{defn}

One of the main results in \cite{Bjo80} is the following theorem.
\begin{theorem}[Theorem 2.3 in \cite{Bjo80}]
\label{thrm:shellable}
Let $P$ be a lexicographically shellable poset. 
Then, the poset $P$ is shellable.
\end{theorem}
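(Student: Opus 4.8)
The plan is to recast the statement as a shellability property of the order complex $\Delta(P)$, whose $i$-faces are the chains $y_0<y_1<\cdots<y_i$ of the proper part $\overline{P}:=P\setminus\{\hat 0,\hat 1\}$ and whose facets are therefore the restrictions to $\overline{P}$ of the maximal chains $\hat 0=x_0\lessdot x_1\lessdot\cdots\lessdot x_n=\hat 1$ of $P$. Since $\Delta(P)$ is pure (all maximal chains have length $n=\rho(\hat 1)$, as $P$ is graded), it suffices to exhibit a shelling order on its facets. I would use the \emph{restriction} reformulation: an ordering $F_1,\ldots,F_t$ of the facets is a shelling precisely when, for each $j\ge 2$, there is a minimal face $R(F_j)\subseteq F_j$ such that a subface $G\subseteq F_j$ is contained in some earlier facet $F_i$ ($i<j$) if and only if $R(F_j)\not\subseteq G$. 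The whole proof is then the identification of a good order together with its restriction operator.

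First I would fix a linear extension of the label poset $\Lambda$ and order the maximal chains $m\colon \hat 0=x_0\lessdot\cdots\lessdot x_n=\hat 1$ lexicographically by their label words $\lambda(m):=\bigl(\lambda(x_0,x_1),\ldots,\lambda(x_{n-1},x_n)\bigr)$ read from bottom to top. For such an $m$ I would introduce its descent set $D(m):=\{\,i\in[1,n-1]:\lambda(x_{i-1},x_i)>\lambda(x_i,x_{i+1})\,\}$ and propose the restriction $R(m):=\{\,x_i:i\in D(m)\,\}$. The verification then splits into two claims: (b) if $R(m)\not\subseteq G$ for a subface $G\subseteq m$, then $G$ lies in a lexicographically earlier facet; and (a) if $R(m)\subseteq G$, then $G$ lies in no earlier facet.

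For (b), pick $i\in D(m)$ with $x_i\notin G$, so $G\subseteq m\setminus\{x_i\}$. Applying the $EL$-labeling to the rank-two interval $[x_{i-1},x_{i+1}]$ produces a unique rising chain through some $x_i'$, and by part (ii) of Definition \ref{defn:EL} (together with $x_i$ giving a descent, hence $x_i\ne x_i'$) one gets $\lambda(x_{i-1},x_i')<\lambda(x_{i-1},x_i)$. The chain $m'$ agreeing with $m$ except at position $i$ then satisfies $\lambda(m')<_{\mathrm{lex}}\lambda(m)$ and contains $m\setminus\{x_i\}\supseteq G$, proving the claim. For (a), suppose instead $G\supseteq R(m)$ and $G\subseteq m'$ for some $m'<_{\mathrm{lex}}m$ with $m'\ne m$. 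Let $i$ be the least index with $x_i\ne x_i'$ and $j$ the least index $>i$ with $x_j=x_j'$; then $x_i,\ldots,x_{j-1}\notin m'$, so none of them lie in $G$. On the interval $[x_{i-1},x_j]$ the chains $m$ and $m'$ are distinct maximal chains with $m'$ lexicographically smaller, so $m$ is not the (lexicographically least) rising chain there, whence $m$ has a descent at some $p\in\{i,\ldots,j-1\}$. Then $x_p\in R(m)\subseteq G$ while $x_p\notin G$, a contradiction. Claims (a) and (b) together show $R(m)$ is the restriction of the facet $m$, so the lexicographic order is a shelling, and $P$ is shellable.

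The step I expect to be the main obstacle is the fact used silently in (a): in every interval of $P$ the rising chain is the \emph{strictly} lexicographically least maximal chain. This is exactly where the full strength of the $EL$-labeling (condition (ii)) is needed rather than a mere $R$-labeling, and I would establish it by a short induction on the length of the interval, peeling off the bottom cover and invoking condition (ii) to force the first label of the rising chain to be strictly minimal. The remaining delicate point is the bookkeeping of the first divergence interval $[x_{i-1},x_j]$ in (a); isolating a maximal block on which $m$ and $m'$ disagree is what guarantees that the descent element $x_p$ produced there genuinely fails to lie in $G$, closing the argument.
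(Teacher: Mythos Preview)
The paper does not supply its own proof of this theorem; it is quoted as Theorem~2.3 of Bj\"orner~\cite{Bjo80} and used as a black box. Your argument is essentially Bj\"orner's original proof: order the maximal chains lexicographically by their label words, define the restriction of a facet to be its set of descent elements, and verify the two halves of the restriction criterion for a shelling. The reasoning is correct, including the auxiliary fact (proved by induction on length, peeling off the bottom cover via condition~(ii)) that the unique rising chain of an interval is strictly lexicographically first among all maximal chains of that interval.

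One small remark: you begin by taking a linear extension of the label poset~$\Lambda$. In the generality of Definition~\ref{defn:EL} where $\Lambda$ is merely a poset, the notion of ``descent'' in your proof is with respect to this linear extension, and you implicitly use that a chain which is not rising in~$\Lambda$ already has a descent in the chosen linear extension; this requires a compatible choice and is where one typically assumes $\Lambda$ totally ordered (as in Bj\"orner's original setting and as in the present paper, where $\Lambda=\mathbb{N}\times\mathbb{N}$ with the total order~\eqref{eq:deflex}). Under that standing hypothesis your proof goes through without change.
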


To apply Theorem \ref{thrm:shellable} to $\mathcal{L}_{n}^{(k)}$, 
we have to show there exists an $EL$-labeling on $\mathcal{L}_{n}^{(k)}$.
Recall that we write the covering relation as $\pi_1\xrightarrow{(\alpha,\beta)_{l}}\pi_2$, 
and we call $(\alpha,\beta)_{l}$ a label.
We introduce a linear order on the labels $(\alpha,\beta)_{l}$ of the edges in the Hasse diagram.
\begin{align}
\label{eq:deflex}
\genfrac{(}{)}{0pt}{}{\alpha_1}{\beta_2}_{l}<\genfrac{(}{)}{0pt}{}{\alpha_2}{\beta_2}_{l'}
\quad
\text{if}\ 
\begin{cases}
l>l', \\
\alpha_1<\alpha_2 \text{ if } l=l', \\ 
\beta_1<\beta_2 \text{ if  } l=l' \text{ and } \alpha_{1}=\alpha_2.
\end{cases}
\end{align}
This means that we choose a poset $\Lambda$ as a totally ordered set $\mathbb{N}\times\mathbb{N}$
in Definition \ref{defn:EL}.
For example, in the case of $(n,k)=(3,2)$, we have six labels and 
they satisfy 
\begin{align*}
\genfrac{(}{)}{0pt}{}{1}{2}_2<\genfrac{(}{)}{0pt}{}{1}{3}_2
<\genfrac{(}{)}{0pt}{}{2}{3}_2<\genfrac{(}{)}{0pt}{}{1}{2}_1
<\genfrac{(}{)}{0pt}{}{1}{3}_1<\genfrac{(}{)}{0pt}{}{2}{3}_1.
\end{align*}

\begin{theorem}
The labeling given in Section \ref{sec:cr} is an $EL$-labeling 
of the lattice $\mathcal{L}_{n}^{(k)}$.
\end{theorem}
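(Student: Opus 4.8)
The plan is to verify the two conditions in Definition \ref{defn:EL} directly, using the explicit combinatorial description of the labels $(\alpha,\beta)_{l}$ and the linear order \eqref{eq:deflex}. The heart of the matter is to identify, for each interval $[x,y]$ in $\mathcal{L}_{n}^{(k)}$, the unique rising unrefinable chain and to show it is the lexicographically smallest among all maximal chains of the interval. Since the rank function $\rho$ only sees the first layer, and since a covering relation merges exactly two blocks of the first layer at a specified layer $l$, a maximal chain of length $m$ in $[x,y]$ is recorded by a word of labels $\binom{\alpha_1}{\beta_1}_{l_1}\cdots\binom{\alpha_m}{\beta_m}_{l_m}$. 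The key structural fact, already visible in Lemma \ref{lemma:betaperm} and its extension, is that along any maximal chain of $[\hat 0,\hat 1]$ the $\beta$-coordinates are a permutation of $\{2,\ldots,n\}$; relativized to an arbitrary interval $[x,y]$, the multiset of $\beta$-values is forced by $y$ (the blocks of $y$ that are coarser than those of $x$ dictate exactly which minima must be absorbed), so different chains in $[x,y]$ differ only in the \emph{order} of a fixed collection of merges together with the \emph{choice of layer} $l$ for each merge.

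First I would establish condition (a), existence and uniqueness of a rising chain on each interval $[x,y]$. For uniqueness I would argue that a rising chain is forced step by step: given $x$, the rising condition $\lambda(x,x_1)\le\lambda(x_1,x_2)\le\cdots$ together with the total order \eqref{eq:deflex} means the first label must be the minimal admissible label, i.e.\ the one with the largest layer $l$, then smallest $\alpha$, then smallest $\beta$, subject to the constraint that the merge stays inside $[x,y]$. For existence I would exhibit this greedy chain and check it actually reaches $y$: because \eqref{eq:deflex} orders larger layers first, the greedy procedure performs all the deep-layer merges before the shallow ones, which is exactly the order in which the compatibility conditions (a) and (b) of ($\diamondsuit 1$) and the layer-monotonicity of Lemma \ref{lemma:condcij} permit the merges to be carried out; the admissibility condition ($\diamondsuit 2$) guarantees that once a line with label $l$ is drawn, the forced shallower lines are consistent. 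I would verify that at each step exactly one label is both admissible and minimal, so the chain is rising and unrefinable, and that no rising chain can choose a larger label without violating admissibility of a subsequent step. This gives the unique rising chain, establishing the $R$-labeling property.

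Next I would check condition (ii), the lexicographic-minimality requirement: if $c:x=x_0\lessdot x_1\lessdot\cdots\lessdot x_n=y$ is the rising chain and $x\lessdot z\le y$ with $z\neq x_1$, then $\lambda(x,x_1)<\lambda(x,z)$. Here $\lambda(x,x_1)$ is by construction the globally minimal label among all covers of $x$ lying below $y$, so the inequality is essentially built into the greedy choice; the only thing to rule out is that some other atom-like cover $z$ could carry the \emph{same} label $\binom{\alpha}{\beta}_{l}$, which cannot happen because a label $(\alpha,\beta)_l$ determines the pair of blocks being merged (the blocks whose minima are $\alpha$ and $\beta$) and the layer $l$ at which they merge, hence determines the cover uniquely. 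I would treat the two regimes uniformly: covers within $\mathcal{P}_{n}^{(k)}$ via the analysis above, and the single top cover $\pi^{(k)}\xrightarrow{(1,n)_k}\hat 1$, whose label $(1,n)_k$ I must check is genuinely the minimal one under \eqref{eq:deflex} among all labels reaching $\hat 1$ (it has the smallest $\alpha$ and smallest $\beta$ after accounting for layer, consistent with the uniform labeling declared for the top edges).

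The main obstacle I anticipate is the interaction between the layer index $l$ and the admissibility conditions ($\diamondsuit 1$)(b) and ($\diamondsuit 2$): unlike the classical $\Pi_n$ case, a single label encodes both which minima are merged \emph{and} at which layer, and the layer-monotonicity along a directed path (Lemma \ref{lemma:condcij}) constrains which orderings of merges are realizable. I will need to argue carefully that the greedy ``largest layer first'' order is always executable, i.e.\ that performing a deep merge never blocks a required shallower merge and that the forced auxiliary lines added by ($\diamondsuit 2$) never create a second rising chain. I expect this to reduce to a clean statement that, within each first-layer block of $y$, the induced sub-chain is itself a rising chain of a smaller lattice $\mathcal{L}_{m}^{(k)}$, so that the verification factors through the intervals block-by-block and follows by induction on $n$, with the base case $k=1$ being exactly the classical $EL$-labeling of $\Pi_n$ from \cite{Bjo80,Sta74}.
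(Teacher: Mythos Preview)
Your approach is essentially the same greedy construction the paper uses: at each step take the cover with minimal label in the order \eqref{eq:deflex}, and observe that this both produces a rising chain and is lexicographically first. The paper organizes the argument slightly differently, by an explicit four-way case split according to whether $y=\hat 1$ and whether $x=\hat 0$, and writes down the rising chain in each case (e.g.\ for $[\hat 0,\hat 1]$ it is $\hat 0:\binom{1}{2}_k\binom{1}{3}_k\cdots\binom{1}{n}_k\binom{1}{n}_k$, and for $[\pi,\hat 1]$ with $\rho(\pi)<n-1$ it is a string of labels $(1,n_i)_k$ with $n_1<\cdots<n_m\le n$). Your uniform greedy description recovers these chains, so the two arguments agree in substance.

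One point deserves more care than you give it. You assert that a label $(\alpha,\beta)_l$ determines the cover uniquely, and use this to get the strict inequality in condition~(ii). That is true for covers inside $\mathcal{P}_n^{(k)}$, but it is \emph{false} at the top: every edge $\pi\lessdot\hat 1$ carries the same label $(1,n)_k$. This does not actually break condition~(ii), because when $x$ has rank $n-1$ the only cover is $\hat 1$ itself (so $z\neq x_1$ is impossible), and when $x$ has smaller rank all covers of $x$ lie in $\mathcal{P}_n^{(k)}$; but your write-up should make this distinction explicit rather than folding it into the uniform claim. The paper's case split handles exactly this by treating intervals with upper end $\hat 1$ separately and observing that the forced final label $(1,n)_k$ pins every label of the rising chain to layer $k$ with $\alpha=1$.
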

\begin{proof}
We have to show that in any interval $[x,y]$ of $\mathcal{L}_{n}^{(k)}$, 
there is a unique rising chain.
Further, to show the labeling is an $EL$-labeling, this unique rising chain has to be 
lexicographically first.
We have four cases:
\begin{enumerate}
\item Intervals $[\hat{0},\hat{1}]$.
Recall that we have $\pi\xrightarrow{(1,n)_{k}}\hat{1}$ for any 
$\pi$ satisfying $\rho(\pi)=n-1$.
When we merge two blocks into one, the integer $\alpha$ in the label 
$(\alpha,\beta)_{l}$ is the least element in these two blocks.
Any rising maximal chain should have a label word
\begin{align}
\label{eq:0to1}
\hat{0}:\genfrac{(}{)}{0pt}{}{1}{2}_{k}\genfrac{(}{)}{0pt}{}{1}{3}_{k}\ldots
\genfrac{(}{)}{0pt}{}{1}{n}_{k}\genfrac{(}{)}{0pt}{}{1}{n}_{k}.
\end{align}
By construction of a label word, any chain except the maximal chain (\ref{eq:0to1}) 
contains a label $(\alpha,\beta)_{l}>(1,n)_{k}$, or the chain is not increasing.
It is obvious that the chain (\ref{eq:0to1}) is a unique rising chain and lexicographically first.

\item Interval $[\pi,\hat{1}]$ for $\rho(\pi)=n-1$.
Since $\rho(\pi)=n-1$,  we have $\pi\xrightarrow{(1,n)_{k}}\hat{1}$.
The label $(1,n)_{k}$ is a unique rising label and lexicographically first.

\item  Intervals $[\pi,\hat{1}]$ for $\rho(\pi)<n-1$.
Since $\rho(\pi)<n-1$, we have to merge blocks to obtain $\pi'$ 
such that $\pi'\in[\pi,\hat{1}]$ and $\rho(\pi')=n-1$.
When we merge blocks in $\pi$, we can choose a label $(1,n_1)_{k}$ 
with some $n_1$ by merging two blocks one of which contains the point $1$.
By choosing the minimum integer $n_1$ in this way, we have 
$\pi\xrightarrow{(1,n_1)_{k}}\pi_1$.
By applying the same procedure to $\pi_1$, we have a sequence of labels:
\begin{align}
\label{eq:intpi1}
\pi\xrightarrow{(1,n_1)_{k}}\pi_1\xrightarrow{(1,n_2)_{k}}\ldots
\xrightarrow{(1,n_m)_{k}}\pi'\xrightarrow{(1,n)_{k}}\hat{1},
\end{align}
where $m=n-1-\rho(\pi)$.
Note that one can choose the integers $n_1,\ldots,n_{m}$ such that 
$n_1<n_2<\ldots<n_{m}$.
Further, we have $n_m\le n$.
The sequence (\ref{eq:intpi1}) is rising and unique.
By construction of $n_1$, the sequence $(\ref{eq:intpi1})$ 
satisfies the condition (ii) in Definition \ref{defn:EL}, 
which implies it is lexicographically first. 

\item Intervals of the form $[\pi,\nu]$ with 
$\hat{0}\le \pi$ and $\nu\neq\hat{1}$.
Since $\pi\le\nu$, the set of edges in $\nu$ contains 
all edges in $\pi$.
Let $E(\pi)$ be the set of labeled edges in a weighted partition $\pi$.
We can pick up an edge $e\in E(\nu)\setminus E(\pi)$ such that 
$e$ is in the maximum layer labeled $l_1$, and the pair of points $(\alpha_1,\beta_1)$
is minimum in the linear order.
Let $\pi_1$ be a weighted partition satisfying $\pi\xrightarrow{(\alpha_1,\beta_1)_{l_1}}\pi_1$.
Then, we repeat the same procedure for $[\pi_1,\nu]$, 
we have a unique rising labels:
\begin{align*}
\pi\xrightarrow{(\alpha_1,\beta_1)_{l_1}}\pi_1\xrightarrow{(\alpha_2,\beta_2)_{l_2}}\pi_2
\ldots \xrightarrow{(\alpha_m,\beta_{m})_{l_m}}\nu,
\end{align*}
where 
\begin{align*}
\genfrac{(}{)}{0pt}{}{\alpha_1}{\beta_1}_{l_1}<\genfrac{(}{)}{0pt}{}{\alpha_2}{\beta_2}_{l_2}
<\ldots<\genfrac{(}{)}{0pt}{}{\alpha_m}{\beta_m}_{l_m}.
\end{align*}
By the construction of $(\alpha_1,\beta_1)_{l_1}$,
this label $(\alpha_1,\beta_{1})_{l_1}$
is the lexicographically first.
\end{enumerate}
In above four cases, any interval $[x,y]$ contains a unique rising chain $c$ 
and $c$ is lexicographically first.
This completes the proof.
\end{proof}

The lexicographically shellability of a poset $P$ gives 
a combinatorial interpretation of the M\"obius function.
We recall the definition of M\"obius function. 
\begin{defn}
Let $\mathcal{P}$ be a poset.
The M\"obius function $\mu:\mathcal{P}\times \mathcal{P}\rightarrow\mathbb{Z}$ is 
recursively defined by
\begin{align*}
\mu(x,y):=
\begin{cases}
1, & \text{ if } x=y, \\
-\sum_{x\le z<y}\mu(x,z), & \text{ if } x<y.
\end{cases}
\end{align*}
Especially, we define $\mu(\mathcal{P}):=\mu(\hat{0},\hat{1})$ if $\hat{0}$ and 
$\hat{1}$ exist.
\end{defn}

\begin{remark}
The M\"obius function of a poset is a combinatorial invariant and 
gives useful information of the structure of the poset.
Another important invariant is the zeta polynomials of the poset, 
which enumerate multichains. Some interesting examples of the computations
of zeta polynomials are given in \cite{Kre65,Kre72}.
These two functions, the M\"obius function and the zeta polynomials, 
are inverses of each other \cite{Ede80a}.
As we will see below, we interpret the M\"obius function in terms 
of enumeration of the chains (see Proposition \ref{prop:Mobiusdc}).
\end{remark}

The following proposition is a key to compute the M\"obius 
function $\mu(\mathcal{P})$ of the poset $\mathcal{P}$, when
the poset admits an $EL$-labeling.

\begin{prop}[\cite{Bjo80,Sta74}]
\label{prop:Mobiusdc}
Suppose the poset $\mathcal{P}$ admits an $EL$-labeling.
The M\"obius function $\mu(\mathcal{P})$ is equal to the number of 
maximal decreasing chains from $\hat{0}$ to $\hat{1}$.
\end{prop}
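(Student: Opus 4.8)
The plan is to derive the statement from the defining property of an $R$-labeling, Definition \ref{defn:EL}(a) alone; the Möbius computation does not need condition (ii), only the existence and uniqueness of the rising chain on every interval. I combine this with Philip Hall's theorem on chain counting. Throughout write $n:=\rho(\hat 1)$, and for a maximal chain $m:\hat 0=x_0\lessdot x_1\lessdot\cdots\lessdot x_n=\hat 1$ record its label word $a_i:=\lambda(x_{i-1},x_i)$ together with its descent set $D(m):=\{\,i\in[n-1]:a_i>a_{i+1}\,\}$, where $>$ refers to the linear order (\ref{eq:deflex}). A chain is rising exactly when $D(m)=\emptyset$, and decreasing exactly when $D(m)=[n-1]$.

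First I would set up the flag enumeration. For $S=\{s_1<\cdots<s_j\}\subseteq[n-1]$ let $\alpha(S)$ be the number of chains $\hat 0=y_0<y_1<\cdots<y_j<y_{j+1}=\hat 1$ with $\rho(y_i)=s_i$. The key step is the identity
\[
\alpha(S)=\#\{\,m\ \text{maximal}:D(m)\subseteq S\,\}.
\]
The bijection sends a maximal chain $m$ with $D(m)\subseteq S$ to the subchain of its elements of ranks $s_1,\dots,s_j$. It is inverted by filling in: given a rank-$S$ chain $(y_i)$, replace each interval $[y_i,y_{i+1}]$ by its unique rising maximal chain, which exists and is unique precisely by the $R$-labeling hypothesis; concatenation yields a maximal chain that rises inside every block, so all its descents lie among the block boundaries $s_1,\dots,s_j$, giving $D(m)\subseteq S$. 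Conversely any maximal chain with $D(m)\subseteq S$ rises on each $[y_i,y_{i+1}]$ and is therefore forced to be the unique rising chain there, hence is recovered from its rank-$S$ skeleton; this produces the inverse map and proves the identity.

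From here the rest is bookkeeping. By inclusion–exclusion the numbers $\beta(S):=\sum_{T\subseteq S}(-1)^{|S|-|T|}\alpha(T)$ count maximal chains with descent set \emph{exactly} $S$, so $\beta([n-1])$ is the number of decreasing maximal chains. A chain of length $i$ from $\hat 0$ to $\hat 1$ is the same datum as a rank set $S$ of size $i-1$ refined by a choice counted by $\alpha(S)$, so writing $c_i$ for the number of such chains one has $c_i=\sum_{|S|=i-1}\alpha(S)$. Philip Hall's theorem $\mu(\hat 0,\hat 1)=\sum_{i\ge 1}(-1)^i c_i$ then becomes $\mu=-\sum_{S\subseteq[n-1]}(-1)^{|S|}\alpha(S)$. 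Substituting $\alpha(S)=\sum_{T\subseteq S}\beta(T)$ and exchanging the order of summation, the inner sum $\sum_{S\supseteq T}(-1)^{|S|}$ vanishes unless $T=[n-1]$, which leaves $\mu(\mathcal P)=(-1)^{n}\,\beta([n-1])$. Thus $\mu(\mathcal P)$ is the number of decreasing maximal chains up to the global sign $(-1)^{n}$, which is exactly the sign appearing in the later formula for $\mu(\mathcal L_n^{(k)})$.

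I expect the main obstacle to be the key identity, and within it the verification that a maximal chain with $D(m)\subseteq S$ is genuinely forced to rise on each block and so is determined by its rank-$S$ skeleton; this is the sole place the uniqueness half of the $R$-labeling is used, and it is what makes the count collapse. An alternative is available because Theorem \ref{thrm:shellable} already supplies shellability: one may instead identify the decreasing maximal chains with the homology facets of the induced shelling of the order complex of $(\hat 0,\hat 1)$ and invoke the Euler–Poincaré relation with Hall's theorem, obtaining the same factor $(-1)^{n}$. That route, however, imports the machinery of shellable complexes, so I would prefer the elementary flag-counting argument above.
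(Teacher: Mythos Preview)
Your argument is correct and is essentially the standard proof due to Stanley and Bj\"orner. Note, however, that the paper does not supply its own proof of this proposition: it is quoted from \cite{Bjo80,Sta74} as a known result and used as a black box, so there is no ``paper's proof'' to compare against beyond those references. Your flag--counting derivation via Philip Hall's theorem and the bijection between rank-selected chains and maximal chains with constrained descent set is exactly the classical argument from those sources.

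One small point worth flagging: you correctly obtain $\mu(\mathcal{P})=(-1)^{n}\beta([n-1])$, whereas the paper's statement suppresses the sign $(-1)^{n}$ (writing simply ``equal to the number of maximal decreasing chains''). This is a minor imprecision in the paper's formulation rather than in your proof; the sign is indeed present and is used consistently in the later applications, e.g.\ in Theorem~\ref{thrm:Gamma} and the corollary $\left|\mathcal{LT}_{n}^{(k)}\right|=(-1)^{n}\mu(\mathcal{L}_{n}^{(k)})$.
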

We will calculate the M\"obius function of the lattice $\mathcal{L}_{n}^{(k)}$
by use of Proposition \ref{prop:Mobiusdc} in Section \ref{sec:CM}.

Let $P$ be a finite poset, and $\mathsf{k}$ be a field or the ring of integers $\mathbb{Z}$, and 
$\tilde{H}_{i}(P,\mathsf{k})$ be the {\it order homology groups of $P$ with coefficients in $\mathsf{k}$}.
By use of the {\it Euler--Poincar\'e formula}, 
the M\"obius function $\mu(P)$ is the {\it Euler characteristic} of order homology:
\begin{align*}
\mu(P)=\sum_{i=-1}^{l(P)}(-1)^{i}\dim_{\mathsf{k}}\tilde{H}_{i}(P,\mathsf{k}).
\end{align*}

\begin{defn}
A graded poset $P$ is said to be {\it Cohen--Macaulay over $\mathsf{k}$} 
if for each open interval $(x,y)$ in $P$ satisfies
\begin{align*}
\tilde{H}_{i}((x,y),\mathsf{k})=0, \text{ for } i\neq \rho(x,y)-2.
\end{align*}
\end{defn}
Note that $\rho(x,y)-2$ is the dimension of a simplicial complex $\Delta((x,y))$.

\begin{theorem}[Theorem 3.2 in \cite{Bjo80}]
\label{thrm:lsCM}
If $P$ is lexicographically shellable, then $P$ is Cohen--Macaulay over 
$\mathbb{Z}$ and all fields $\mathsf{k}$.
\end{theorem}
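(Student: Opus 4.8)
The plan is to deduce the Cohen--Macaulay property from shellability, which Theorem \ref{thrm:shellable} already supplies out of lexicographic shellability. The bridge between the two notions is the order complex $\Delta(P)$ whose simplices are the chains of $P$: the shellability asserted in Theorem \ref{thrm:shellable} is a statement about this complex, whereas the Cohen--Macaulay definition above is phrased through the order homology of open intervals. The core of the proof is thus to translate shellability of the complex into vanishing of homology below top dimension, via a standard topological lemma.

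First I would record that lexicographic shellability is inherited by closed intervals. If $\lambda$ is an $EL$-labeling of $P$ and $[x,y]$ is any closed interval, then the restriction of $\lambda$ to the covering relations lying inside $[x,y]$ is again an $EL$-labeling: conditions (i) and (ii) of Definition \ref{defn:EL} only refer to (rising) unrefinable chains within a single interval, so they survive passage to a subinterval. Hence every interval $[x,y]$ is lexicographically shellable, and therefore shellable by Theorem \ref{thrm:shellable}. Since $P$ is graded, each interval is pure of rank $\rho(x,y)$, so the order complex $\Delta((x,y))$ of its proper part is pure of dimension $\rho(x,y)-2$.

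The heart of the argument is the topological fact that a pure shellable simplicial complex of dimension $d$ is homotopy equivalent to a wedge of $d$-spheres; in particular its reduced simplicial homology $\tilde{H}_i(\,\cdot\,;\mathsf{k})$ vanishes for $i\neq d$, uniformly over $\mathsf{k}=\mathbb{Z}$ and all fields. This is proved by running through a shelling order $F_1,\ldots,F_t$ of the facets and checking that attaching $\overline{F_i}$ to the union of the previous facets either fills a $d$-cell along all but one of its codimension-one faces (a homotopy-trivial gluing) or caps off a new top-dimensional sphere, so that no homology is introduced below dimension $d$. Applying this to $\Delta((x,y))$, which we have just shown to be pure shellable of dimension $\rho(x,y)-2$, gives $\tilde{H}_i((x,y);\mathsf{k})=0$ for $i\neq\rho(x,y)-2$, which is exactly the Cohen--Macaulay condition of the preceding definition. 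Because the shelling argument never inverts an integer, the conclusion holds simultaneously over $\mathbb{Z}$ and over every field $\mathsf{k}$.

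The step I expect to be the main obstacle is the topological lemma of the third paragraph: verifying carefully that a shelling creates no homology below top dimension requires tracking how each facet is glued to the subcomplex built so far, and this is the genuine content underlying Theorem \ref{thrm:shellable} and its consequences. By contrast, the inheritance of the $EL$-labeling by intervals and the dimension bookkeeping are routine once the order-complex dictionary between intervals and links of chains is in place.
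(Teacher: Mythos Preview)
The paper does not give its own proof of this theorem: it is quoted verbatim as Theorem~3.2 of \cite{Bjo80} and used as a black box. Your sketch is the standard argument from that reference---restrict the $EL$-labeling to intervals, apply Theorem~\ref{thrm:shellable} to get shellability of each $\Delta((x,y))$, and then use the fact that a pure shellable complex is a wedge of top-dimensional spheres---and it is correct, so there is nothing in the paper to compare against beyond noting that your outline matches Bj\"orner's original proof.
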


Recall $\mathcal{L}:=\mathcal{L}_{n}^{(k)}$ is lexicographically shellable.
Thus, from Theorem \ref{thrm:lsCM}, we have 
\begin{align*}
(-1)^{\rho(x,y)}\mu(x,y)=\dim_{\mathsf{k}}\tilde{H}_{\rho(x,y)-2}((x,y),\mathsf{k})\ge0,
\end{align*}
where $(x,y)$ is an open interval of $\mathcal{L}$.
Especially, if $(x,y)=(\hat{0},\hat{1})$, we have $\rho(x,y)=n$, and 
\begin{align*}
(-1)^{n}\mu(\mathcal{L})=\dim_{\mathsf{k}}\tilde{H}_{n-2}(\mathcal{L},\mathsf{k}).
\end{align*}

\section{Computation of the M\"obius function}
\label{sec:CM}
\subsection{M\"obius function}
We consider the lattice $\mathcal{L}^{(k)}_{n}$ to compute its M\"obius function. 
For each permutation $\sigma'$ in the set $\{2,3,\ldots,n\}$, we assign a circular diagram as follows.
We append $1$ to $\sigma'$ from left and obtain a permutation $\sigma$ in $[n]$.
In the one-line notation of $\sigma:=(\sigma_1,\ldots,\sigma_{n})$, we construct following pairs of integers.
Given $2\le j\le n$, let $i$ be a maximum integer such that $i<j$ and $\sigma_{i}<\sigma_{j}$.
Note that there is no $i'$ satisfying $i<i'<j$ and $\sigma_{i'}<\sigma_j$.
Since $\sigma_1=1$, this operation is well-defined. 
Let $I(\sigma)$ be the set of pairs $(\sigma_{i},\sigma_{j})$ constructed as above.
In the circular presentation, we connect the points labeled $i$ with the points labeled $j$ if
$(i,j)\in I(\sigma)$.

\begin{lemma}
\label{lemma:k1dc}
Let $\sigma$ be a permutation in $[n]$ and $\sigma_1=1$.
A circular presentation of $I(\sigma)$ is bijective to a 
maximal decreasing chain of $\mathcal{P}^{(1)}_{n}$. 
\end{lemma}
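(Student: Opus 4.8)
The plan is to factor the desired bijection through the set of \emph{recursive trees} on $[n]$: rooted trees with vertex set $[n]$, root $1$, in which every non-root vertex $j$ has a parent $p(j)$ with $p(j)<j$ (equivalently, labels increase along each path from the root). Both sides of the claimed correspondence will be identified explicitly with recursive trees. First I record the shape of any maximal chain. A maximal chain runs from $\hat 0$ (rank $0$) to $12\cdots n$ (rank $n-1$), so it has $n-1$ edges, each carrying a label $(\alpha,\beta)$ with $\alpha<\beta$; by Lemma \ref{lemma:betaperm} the $\beta$-coordinates are exactly $2,3,\dots,n$, each occurring once. Hence the label set of a maximal chain is encoded by the map $p$ sending each $j\in\{2,\dots,n\}$ to the value $\alpha<j$ appearing in the unique label with $\beta=j$; drawing the line $(p(j),j)$ for each $j$ produces a graph on $[n]$ with $n-1$ edges in which every $j\ge 2$ is the larger endpoint of exactly one edge. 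Following edges toward smaller labels always terminates at $1$, so this graph is connected and acyclic, i.e. a recursive tree with parent function $p$, and this is exactly the type of circular diagram produced from $I(\sigma)$.

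Next I set up the bijection between maximal decreasing chains and recursive trees. Given a recursive tree with parent function $p$, its $n-1$ edges $\{(p(j),j)\}$ carry distinct labels (distinct $\beta$), so there is a unique way to list them as a strictly decreasing sequence in the order (\ref{eq:deflex}) with all layers $l=1$. The central claim is that performing the corresponding merges in this order is always legal and produces a decreasing chain. For legality I run the merge bookkeeping: the block whose minimum is $j$ survives until the edge with $\beta=j$ is processed, so the $\beta$-endpoint is always present, and the $\alpha$-endpoint $p(j)$ is still a block minimum because the only edge that could absorb $p(j)$, namely $(p(p(j)),p(j))$, has strictly smaller $\alpha$-coordinate $p(p(j))<p(j)$ and is therefore processed strictly later. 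The same inequality, applied along the path to the root, shows that when $(p(j),j)$ is processed no ancestor edge of $p(j)$ has yet been added, so the block containing $p(j)$ has minimum exactly $p(j)$ and is distinct from the block containing $j$; hence that merge carries precisely the label $(p(j),j)$. Conversely a maximal decreasing chain is determined by the decreasing order of its labels, whose parent data is a recursive tree by the previous paragraph, so maximal decreasing chains correspond bijectively to recursive trees.

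Finally I analyze $I(\sigma)$. For $\sigma$ with $\sigma_1=1$ the values $\{\sigma_2,\dots,\sigma_n\}$ are exactly $\{2,\dots,n\}$, and the construction assigns to each value $j$ its nearest-smaller-value-to-the-left $\alpha(j)$, which satisfies $\alpha(j)<j$; thus $I(\sigma)$ is a recursive tree with parent function $\alpha$. I will show $\sigma\mapsto I(\sigma)$ is a bijection onto all recursive trees by exhibiting the inverse: reading the vertices of a recursive tree in the unique order that begins at $1$ and is compatible with the nearest-smaller-value rule recovers $\sigma$. An induction on $n$ makes this precise, inserting the largest value $n$ immediately to the right of its prescribed parent (the parent of $n$ is forced to be the entry directly to its left, and deleting $n$ returns a recursive tree on $[n-1]$). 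Composing this bijection with the one of the previous paragraph identifies each diagram $I(\sigma)$ with a unique maximal decreasing chain, which is the assertion.

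I expect the main obstacle to be the legality argument of the second paragraph: verifying that listing the edges of a recursive tree in decreasing label order yields a genuine chain whose edge labels are the predicted pairs. The delicate point is that the label read off a merge depends on the current block minima, so one must control simultaneously, for every vertex, which of its ancestor edges have already been inserted; the inequality $p(p(j))<p(j)$ built into recursive trees is exactly what supplies this control. Carrying the inverse of $\sigma\mapsto I(\sigma)$ through rigorously will likewise require a careful induction, but no step beyond the two inductions and the block-minima bookkeeping should be needed.
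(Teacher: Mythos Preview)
Your argument is correct. The core mechanism---sorting the $n-1$ pairs $(i,j)$ with $i<j$ into reverse lexicographic order and reading off a decreasing chain---is the same as the paper's, but your organization is different and somewhat more thorough. The paper proceeds directly: it observes that in the circular presentation of $I(\sigma)$ each $j\in\{2,\dots,n\}$ is joined to a unique smaller $i$, sorts these pairs in reverse lexicographic order, notes that the resulting sequence avoids the forbidden pattern of Eq.~(\ref{eq:1223}), and concludes that the result is a maximal decreasing chain; bijectivity is asserted rather than argued in detail. You instead introduce recursive trees on $[n]$ as an intermediate object and establish two separate bijections, one between maximal decreasing chains and recursive trees (via the parent function $p$) and one between permutations $\sigma$ with $\sigma_1=1$ and recursive trees (via nearest-smaller-value-to-the-left). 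What this buys you is an explicit inverse on both sides and a clean verification that the merge at step $(p(j),j)$ really carries that label---your observation that the ancestor edge $(p(p(j)),p(j))$ has strictly smaller $\alpha$-coordinate and is therefore processed later is exactly what the paper's ``avoids the pattern (\ref{eq:1223})'' remark is gesturing at, made precise. The paper's proof is shorter; yours is more self-contained about why the map is a bijection rather than merely well-defined.
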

\begin{proof}
By definition of the circular representation of $I(\sigma)$, 
each integer $j\in\{2,3,\ldots,n\}$ is connected to an integer $i\in\{1,2,\ldots,i-1\}$.
Further, integers in $[2,n]$ appear exactly once as $j$.
We sort the pairs $(i,j)\in I(\sigma)$ in the reverse lexicographic order, and 
obtain a decreasing chain.
Then, it is obvious that the sequence of the pairs avoids the 
pattern Eq. (\ref{eq:1223}).
The number of labels is $n-1$ and this implies that the sequence is a 
maximal chain in $\mathcal{P}_{n}^{(1)}$.
From these observations, $I(\sigma)$ is bijective to a maximal decreasing chain 
of $\mathcal{P}^{(1)}_{n}$.
\end{proof}

\begin{example}
We consider $I(145362)=\{(2,1),(6,3),(3,1),(5,4),(4,1)\}$. 
Its circular presentation and the corresponding maximal decreasing chain
are given by
\begin{align*}
\tikzpic{-0.5}{
\draw circle(1.2cm);
\foreach \a in {90,30,-30,-90,-150,-210}
\filldraw [black] (\a:1.2cm)circle(1.5pt);
\draw (90:1.2cm) node[anchor=south] {$1$};
\draw (30:1.2cm) node[anchor=south west] {$2$};
\draw (-30:1.2cm) node[anchor=north west] {$3$};
\draw (-90:1.2cm) node[anchor=north] {$4$};
\draw (-150:1.2cm) node[anchor=north east] {$5$};
\draw (-210:1.2cm) node[anchor=south east] {$6$};
\draw(30:1.2cm)to(90:1.2cm)(-210:1.2cm)to(-30:1.2cm)to(90:1.2cm)
(-150:1.2cm)to(-90:1.2cm)to(90:1.2cm);
}
\qquad\leftrightarrow\qquad 
\genfrac{(}{)}{0pt}{}{4}{5}\genfrac{(}{)}{0pt}{}{3}{6}\genfrac{(}{)}{0pt}{}{1}{4}
\genfrac{(}{)}{0pt}{}{1}{3}\genfrac{(}{)}{0pt}{}{1}{2}.
\end{align*}
Note that we have a unique maximal decreasing chain from $I(145362)$.
\end{example}

\begin{remark}
The circular diagram for $I(\sigma)$ is essentially the same 
as a diagram in $\mathcal{C}(n,1)$ introduced in Definition \ref{defn:Cnr}.
\end{remark}

Let $\tau$ be the circular presentation of $I(\sigma)$.
We consider the coloring of the edges in $\tau$ as follows:
\begin{enumerate}
\item An edge connecting the points $1$ and $i$, $2\le i$, 
has a color $c\in\{1,2,\ldots,k-1\}$, 
\item An edge connecting the points $i$ and $j$, $1<i<j\le n$, 
has a color $c'\in\{1,2,\ldots,k\}$.
\end{enumerate}
We denote by $\mathbf{c}$ the coloring of the circular presentation 
$\tau$ which satisfies the above two conditions.

Lemma \ref{lemma:k1dc} is generalized to the case of $k\ge2$:
\begin{lemma}
\label{lemma:taucmdc}
Let $k\ge2$.
A circular presentation $\tau$ with a coloring $\mathbf{c}$ is 
bijective to a maximal decreasing chain in the Hasse diagram.
\end{lemma}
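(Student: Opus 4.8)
The plan is to exhibit mutually inverse maps between colored circular presentations $(\tau,\mathbf{c})$ and maximal decreasing chains from $\hat0$ to $\hat1$, using the $k=1$ case (Lemma~\ref{lemma:k1dc}) to handle the underlying shape and reading the coloring $\mathbf{c}$ directly as the sequence of layers. Given $(\tau,\mathbf{c})$, each edge joining points $a<b$ with color $c$ becomes a label $\binom{a}{b}_{c}$ (so $\alpha=a$, $\beta=b$, $l=c$); I list these $n-1$ labels in decreasing order for (\ref{eq:deflex}) and append the edge $\binom{1}{n}_{k}$ to $\hat1$. Conversely, from a maximal decreasing chain I read its first $n-1$ labels as colored edges and forget the trailing $\binom{1}{n}_{k}$. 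First I would record the structural facts that make the maps well defined: the values $\beta$ of the $n-1$ labels are exactly $\{2,\dots,n\}$, each once (Lemma~\ref{lemma:betaperm} together with its $k\ge2$ analogue), so the labels are pairwise distinct and the sorting is unambiguous; and each $\beta$ acquires a unique smaller partner $\alpha<\beta$, which I take as the parent of $\beta$, producing an increasing tree rooted at $1$, i.e. a genuine $I(\sigma)$ as in Lemma~\ref{lemma:k1dc}.

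Next I would check that the coloring constraints encode exactly the decreasing condition at the top. Since $\binom{1}{n}_{k}$ labels the edge to $\hat1$, a label $\binom{\alpha}{\beta}_{l}$ may precede it in a decreasing chain iff $\binom{\alpha}{\beta}_{l}>\binom{1}{n}_{k}$. Unwinding (\ref{eq:deflex}): an edge through the point $1$ has $\alpha=1$, so it beats $\binom{1}{n}_{k}$ exactly when $l\le k-1$, which is condition~(1) on $\mathbf{c}$; an edge with $\alpha>1$ beats $\binom{1}{n}_{k}$ for every $l\in[k]$, which is condition~(2). Thus a coloring satisfies (1)--(2) precisely when appending $\binom{1}{n}_{k}$ to the sorted labels keeps the whole word strictly decreasing.

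The heart of the argument is admissibility: I must show that processing the sorted labels upward from $\hat0$ realizes genuine covering relations, that ($\diamondsuit1$) and ($\diamondsuit2$) hold at each step, and that the step along $e=(a,b)$ is recorded with label exactly $\binom{a}{b}_{c}$. The first-layer condition ($\diamondsuit1$)(a) is automatic: in $\tau$ the only edge joining $\beta$ to a smaller vertex is the edge to its parent, which is the current edge, so beforehand the first-layer block of $\beta$ consists of $\beta$ and already-merged descendants, all $\ge\beta$, whence $\beta$ is its block minimum. For ($\diamondsuit1$)(b) I would prove the key claim that, when edge $(a,b)$ of color $l=c$ is processed, both $a$ and $b$ are least in their layer-$l$ blocks. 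The only smaller candidates for $a$'s layer-$l$ block are its ancestors in $\tau$, reached through the edge from $a$ to its parent $p$: if that edge has color $\ge l$, then since $p<a$ the decreasing order (\ref{eq:deflex}) shows it is not yet processed, so $p$ is not yet joined to $a$; if it has color $c'<l$, then $p$ and $a$ lie together only in layers $\le c'$ and are separated at layer $l$ by the nesting $B^{(1)}\supseteq B^{(2)}\supseteq\cdots$. Iterating up the root-path shows no vertex smaller than $a$ shares its layer-$l$ block, and the same reasoning (with the current edge still unprocessed) gives minimality for $b$; hence the covering label is exactly $\binom{a}{b}_{c}$ and ($\diamondsuit2$) fixes the compatible lower layers. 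I expect this minimality-under-the-decreasing-order claim to be the main obstacle, precisely because no monotonicity is imposed on $\mathbf{c}$, in contrast with the non-decreasing path condition of Lemma~\ref{lemma:condcij} for the final partition; the resolution is that the interplay between (\ref{eq:deflex}) and the nesting of layers forces every ancestor to be either unprocessed or already layer-separated.

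Finally I would assemble the bijection. The two maps are inverse by construction: a decreasing chain is already decreasing, so re-sorting its labels reproduces it, and the tree-with-coloring it yields is exactly the data fed to the forward map, while in the other direction the sorted, admissible word is a maximal chain of length $n$ reaching $\hat1$ whose labels recover $(\tau,\mathbf{c})$. As a numerical check I note that weighting each tree-edge through $1$ by $k-1$ and every other edge by $k$, then summing over increasing trees on $[n]$, gives $\prod_{m=1}^{n-1}(km-1)$, since vertex $j$ may attach to $1$ (weight $k-1$) or to one of $j-2$ larger-labelled predecessors (weight $k$ each); this is the value the Möbius count is expected to take.
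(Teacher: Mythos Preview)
Your proposal is correct and follows the same approach as the paper: associate to each colored edge $(i,j)$ with color $c$ the label $(i,j)_c$, sort in reverse lexicographic order, and append $(1,n)_k$. The paper's own proof is a three-line sketch that declares the verification of $(\diamondsuit1)$ ``obvious, by construction,'' whereas you actually carry out that verification (the minimality of $a$ and $b$ in their layer-$l$ blocks via the interplay of the order \eqref{eq:deflex} with the increasing-tree structure) and spell out the inverse map and the compatibility of the coloring rules with the terminal label $(1,n)_k$; so your argument is strictly more complete than the paper's.
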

\begin{proof}
Recall the order of labels defined in Eq. (\ref{eq:deflex}).
As in the proof of Lemma \ref{lemma:k1dc}, we sort the labels in $I(\sigma)$ with 
the coloring $\mathbf{c}$ in the reverse lexicographic order.
For $(i,j)\in I(\sigma)$ with a color $c$, we associate the label 
$(i,j)_{c}$ to it.
It is obvious, by construction, that the chain of labels satisfy the conditions ($\diamondsuit1$) 
in Section \ref{sec:cr}.
Then, $\tau$ with the color $\mathbf{c}$ is bijective to a maximal decreasing chain 
in $\mathcal{L}_{n}^{(k)}$.
\end{proof}

Recall that the label from $\pi$ with $\rho(\pi)=n-1$ to $\hat{1}$ is $(1,n)_{k}$.
The conditions of a coloring $\mathbf{c}$ are compatible with this label $(1,n)_{k}$.
In other words, the decreasing chain from $\hat{0}$ to  $\pi\lessdot\hat{1}$ should 
satisfy that the last label is larger than $(1,n)_{k}$.

\begin{defn}
We define the exponential generating function for the M\"obius functions 
of the lattice $\mathcal{L}_{n+1}^{(k)}$ by 
\begin{align*}
\Gamma^{(k)}(x):=
\sum_{0\le n}(-1)^{n+1}\mu(\mathcal{L}_{n+1}^{(k)})\genfrac{}{}{}{}{x^{n}}{n!}.
\end{align*}
\end{defn}
First few terms of $\Gamma^{(k)}(x)$ are 
\begin{align*}
\Gamma^{(2)}(x)&=1+x+\genfrac{}{}{}{}{3x^2}{2!}+\genfrac{}{}{}{}{15x^3}{3!}
+\genfrac{}{}{}{}{105x^4}{4!}+\cdots, \\
\Gamma^{(3)}(x)&=1+2x+\genfrac{}{}{}{}{10x^2}{2!}+\genfrac{}{}{}{}{80x^3}{3!}
+\genfrac{}{}{}{}{880x^4}{4!}+\cdots, \\
\Gamma^{(4)}(x)&=1+3x+\genfrac{}{}{}{}{21x^2}{2!}+\genfrac{}{}{}{}{231x^3}{3!}
+\genfrac{}{}{}{}{3465x^4}{4!}+\cdots, \\
\Gamma^{(5)}(x)&=1+4x+\genfrac{}{}{}{}{36x^2}{2!}+\genfrac{}{}{}{}{504x^3}{3!}
+\genfrac{}{}{}{}{9576x^4}{4!}+\cdots. 
\end{align*}
These coefficients corresponds to the sequence A001147, A008544, A008545, and A008546
respectively in OEIS \cite{Slo}.

The next theorem is one of the main results of this paper.
\begin{theorem}
\label{thrm:Gamma}
The exponential generating function $\Gamma^{(k)}(x)$ of the M\"obius function $\mu(\mathcal{L}_{n}^{(k)})$ is given by 
\begin{align}
\label{eq:Gamma}
\Gamma^{(k)}(x)=(1-kx)^{-(k-1)/k}.
\end{align}
\end{theorem}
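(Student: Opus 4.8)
The plan is to compute the number of maximal decreasing chains of $\mathcal{L}_{n}^{(k)}$ and assemble them into the generating function $\Gamma^{(k)}(x)$. By Proposition \ref{prop:Mobiusdc}, combined with the Cohen--Macaulay sign relation $(-1)^{n}\mu(\mathcal{L}_{n}^{(k)})\ge0$ coming from Theorem \ref{thrm:lsCM}, the number of maximal decreasing chains of $\mathcal{L}_{n}^{(k)}$ equals $(-1)^{n}\mu(\mathcal{L}_{n}^{(k)})$. By Lemma \ref{lemma:taucmdc} these chains are in bijection with the colored circular diagrams $(\tau,\mathbf{c})$, where $\tau$ runs over the circular presentations $I(\sigma)$ of permutations $\sigma$ of $[n]$ with $\sigma_{1}=1$ and $\mathbf{c}$ is an admissible coloring. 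Thus the whole problem reduces to counting these colorings.

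First I would fix $\tau$ and count its colorings. A permutation $\sigma$ with $\sigma_{1}=1$ is the same datum as a permutation $\sigma'$ of $\{2,\dots,n\}$, and $\tau=I(\sigma)$ is the tree rooted at the point $1$ from Lemma \ref{lemma:k1dc}, with $n-1$ edges in total. By the construction of $I(\sigma)$, an edge is incident to the point $1$ precisely when its upper endpoint is a left-to-right minimum of $(\sigma_{2},\dots,\sigma_{n})$; write $r=r(\sigma')$ for the number of such minima. By the definition of the coloring $\mathbf{c}$, each of the $r$ edges at the point $1$ may receive one of $k-1$ colors while each of the remaining $(n-1)-r$ edges may receive one of $k$ colors, all choices independent. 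Hence the number of colored diagrams over $\sigma'$ is $(k-1)^{r}k^{(n-1)-r}$. Summing over $\sigma'$, and using that the number of permutations of an $(n-1)$-element set with exactly $r$ left-to-right minima is the signless Stirling number $|s(n-1,r)|$ (equivalently, the number with $r$ cycles, as recorded in the enumeration section), I obtain
\begin{align*}
(-1)^{n}\mu(\mathcal{L}_{n}^{(k)})=\sum_{r\ge1}|s(n-1,r)|\,(k-1)^{r}k^{(n-1)-r}.
\end{align*}

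Next I would evaluate this sum. The signless form of (\ref{eq:defsn}), namely $\sum_{r}|s(m,r)|z^{r}=z(z+1)\cdots(z+m-1)$, evaluated at $z=(k-1)/k$ and cleared of denominators by the factor $k^{n-1}$, collapses the sum to
\begin{align*}
(-1)^{n}\mu(\mathcal{L}_{n}^{(k)})=\prod_{i=0}^{n-2}\bigl(k(i+1)-1\bigr)=\prod_{j=1}^{n-1}(kj-1),
\end{align*}
which is the product formula quoted in the introduction. (Alternatively, the Stirling recurrence $|s(m,r)|=|s(m-1,r-1)|+(m-1)|s(m-1,r)|$ turns the weighted sum $W_{m}:=\sum_{r}|s(m,r)|(k-1)^{r}k^{m-r}$ into the one-term recurrence $W_{m}=(km-1)W_{m-1}$, avoiding the explicit generating function.)

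Finally I would recognize the closed form. Replacing $n$ by $n+1$ gives $(-1)^{n+1}\mu(\mathcal{L}_{n+1}^{(k)})=\prod_{j=1}^{n}(kj-1)$, so by definition $\Gamma^{(k)}(x)=\sum_{n\ge0}\bigl(\prod_{j=1}^{n}(kj-1)\bigr)x^{n}/n!$. Writing $c_{n}=\prod_{j=1}^{n}(kj-1)$ and using $c_{n+1}=(k(n+1)-1)c_{n}$, a term-by-term comparison shows that $\Gamma:=\Gamma^{(k)}$ satisfies the first order linear equation $(1-kx)\Gamma'(x)=(k-1)\Gamma(x)$ with $\Gamma(0)=1$; integrating $\Gamma'/\Gamma=(k-1)/(1-kx)$ yields $\Gamma(x)=(1-kx)^{-(k-1)/k}$, which is (\ref{eq:Gamma}). (Equivalently, one matches the coefficients of the generalized binomial series of $(1-kx)^{-(k-1)/k}$ directly.) The main obstacle is the second step: correctly reading off from the definition of $\mathbf{c}$ that exactly the edges at the root are the ones restricted to $k-1$ colors, and identifying the count of those edges with the left-to-right minima statistic so that $|s(n-1,r)|$ appears. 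Once the weighted sum is set up, both the product evaluation and the differential equation are routine.
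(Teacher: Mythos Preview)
Your argument is correct. You and the paper both reduce, via Proposition~\ref{prop:Mobiusdc} and Lemma~\ref{lemma:taucmdc}, to counting colored diagrams $(\tau,\mathbf{c})$, and both arrive at the product $\prod_{j=1}^{n-1}(kj-1)$; the difference is purely in how that count is carried out. The paper observes that the diagrams $I(\sigma)$ are precisely the increasing trees on $[n]$, so that the joint choice of $(\tau,\mathbf{c})$ factors vertex by vertex: for each $j\ge2$ one chooses a parent $i<j$ together with a color, giving $k(j-2)+(k-1)$ options, and the product formula drops out immediately. You instead fix $\sigma'$, count colorings as $(k-1)^{r}k^{(n-1)-r}$ with $r$ the number of root edges, identify $r$ with the left-to-right minima statistic, and then sum via the signless Stirling identity $\sum_{r}|s(m,r)|z^{r}=z(z+1)\cdots(z+m-1)$. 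Your route is a little longer but makes the Stirling-number connection explicit and avoids having to assert the increasing-tree bijection; the paper's route is slicker because it bypasses the intermediate sum entirely. The ODE derivation of the closed form at the end is equivalent to the paper's ``straightforward'' remark and is fine.
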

\begin{proof}
From Proposition \ref{prop:Mobiusdc} and Lemma \ref{lemma:taucmdc}, the value $\mu(\mathcal{L}^{(k)}_{n+1})$ is 
equal to the number of $\tau\in I(\sigma)$ with possible coloring $\mathbf{c}$.
The circular presentation of $\tau$, the point $j>1$ can be connected to the point $i<j$.
By definition of the coloring, we have $k$ ways to put  a color on the points $i$ and $j$ 
with $i\neq1$, and $k-1$ ways for $i=1$.
Then, we have $(k(j-2)+k-1)$ possible ways to choose a point $i\in[1,j-1]$ and its coloring.
Therefore, we have 
\begin{align*}
\mu(\mathcal{L}_{n+1}^{(k)})&=(-1)^{n-1}\prod_{j=2}^{n+1}(k(j-2)+k-1), \\
&=(-1)^{n-1}\prod_{j=0}^{n-1}(k(j+1)-1).
\end{align*}
It is straightforward to see the exponential generating function $\Gamma^{(k)}(x)$ 
is given by Eq. (\ref{eq:Gamma}).
\end{proof}

\subsection{Maximal decreasing chains and labeled binary trees}
In this subsection, we introduce the notion of a labeled binary tree 
which is bijective to a maximal decreasing chain of $\mathcal{L}_{n}^{(k)}$.
This notion is a generalization of labeled binary trees studied in \cite{Bar90,BarBer90}.

A {\it complete binary tree} is a rooted binary tree such that each internal node 
has exactly two children.
A {\it leaf} is an external node which has no children.

\begin{defn}
We denote by $\mathcal{T}_{n}$ the set of complete binary trees with $n$ leaves.
\end{defn}

It is well-known that the total number of complete binary trees in $\mathcal{T}_{n}$ is given by 
the $n-1$-th Catalan number, {\it i.e.}, 
\begin{align*}
\left|\mathcal{T}_{n}\right|
=\genfrac{}{}{}{}{1}{n}\genfrac{(}{)}{0pt}{}{2(n-1)}{n-1},
\end{align*}
for $n\ge2$.

We introduce a liner order for integer labels with a subscript, that is, 
$l_{q}<l'_{p}$ if $q>p$ or $l<l'$ if $p=q$:
\begin{align}
\label{eq:linord}
1_{k}<2_{k}<\ldots<n_{k}<1_{k-1}<\ldots<n_{k-1}<\ldots<1_{1}<\ldots<n_{1}. 
\end{align}
Note that the linear order (\ref{eq:linord}) is compatible with the 
linear order (\ref{eq:deflex}).

A {\it labeled binary tree} $L(T)$ of shape $T\in\mathcal{T}_{n}$ is a binary tree $T$ 
such that it has labels on the leaves and internal nodes (including the root).
A label consists of an integer in $[n]$ and a subscript in $[k]$, and it 
satisfies the linear order (\ref{eq:linord}).

We first construct a labeled binary tree for $k=1$.
Let $\sigma:=\sigma_1\cdots\sigma_{n}$ be a permutation of the set $\{1,2,\ldots,n\}$
and satisfying $\sigma_1=1$.
We define a non-negative integer $s_{j}$, $j\in[n]$, by 
\begin{align*}
s_{j}:=\#\{i | \sigma_{i}<\sigma_{j}, i<j \}.
\end{align*}
A labeled binary tree $T(\sigma)$ associated to $\sigma$ is recursively defined as follows:
\begin{enumerate}[($\clubsuit1$)]
\item $T(\sigma)$ consists of a single node, the root, if $\sigma=\sigma_1=1$.
\item $T(\sigma)$ consists of the root, and its left and right child if $\sigma=\sigma_1\sigma_2=12$.
The label of the left child is $1$ and that of the right child is $2$.
There is no label on the root.
\item Let $\sigma':=\sigma'_{1}\cdots\sigma'_{n-1}$ be a permutation on $[1,n-1]$ and $T(\sigma')$ be a binary tree
associated to $\sigma'$.
The permutation $\sigma'$ can be obtained from $\sigma$ by deleting $n$.
Let $\sigma_{l}=n$.
Then, $T(\sigma)$ is obtained from $T(\sigma')$ by attaching a left and right child
at the $s_{l}$-th leaf of $T(\sigma')$ from left. 
The label of the newly added left leaf is equal to that of its parent, and 
the label of the newly added right leaf is equal to $n$.
\end{enumerate}

By definition of $\sigma$ and $s_{j}$, we always have $s_{j}\ge1$. The addition 
of leaves to $T(\sigma')$ in ($\clubsuit3$) is well-defined.

Note that we have $n-1$ internal nodes and $n$ external nodes (leaves) in 
the tree $T(\sigma)$.
If we read the labels of the leaves of $T(\sigma)$ from left to right,
the word is nothing but a permutation $w:=1\sigma_2\ldots\sigma_{n}$ in 
one-line notation.

Suppose that $\sigma$ is a permutation of length $n$ with $\sigma_1=1$.
Since the total number of labeled binary trees is equal to the number of 
permutations $\sigma$ of length $n$ with $\sigma_1=1$, we have 
$\#\{\sigma\in\mathbb{S}_{n}: \sigma_1=1\}=(n-1)!$ where $\mathbb{S}_{n}$ 
is the symmetric group of $[n]$.
This number $(n-1)!$ is the same as the signless M\"obius function of the lattice 
of partitions.
The above recursive definition of $T(\sigma)$ implies that
$\#\{T(\sigma): \sigma\in\mathbb{S}_{n}, \sigma_1=1\}=(n-1)!$.

\begin{example}
\label{ex:LBTk1}
When $n=4$, we have $3!=6$ labeled binary trees:
\begin{align*}
\tikzpic{-0.5}{[scale=0.6]
\node (root)at(0,0){};
\node (L)at(-1,-1){$1$};
\node (R)at(1,-1){$2$};
\node (RL)at(0,-2){$2$};
\node (RR)at(2,-2){$3$};
\node (RRL)at(1,-3){$3$};
\node(RRR)at(3,-3){$4$};
\draw(root)--(L)(root)--(R)--(RL)(R)--(RR)--(RRL)(RR)--(RRR);
} \quad
\tikzpic{-0.5}{[scale=0.6]
\node (root)at (0,0){};
\node (L)at (-1,-1){$1$};
\node (R) at (1,-1){$2$};
\node(RL) at (0,-2){$2$};
\node(RR)at (2,-2){$3$};
\node(RLL)at (-1,-3){$2$};
\node(RLR)at(1,-3){$4$};  
\draw(root)--(L)(root)--(R)--(RL)--(RLL)(R)--(RR)(RL)--(RLR);
} \quad
\tikzpic{-0.5}{[scale=0.6]
\node (root)at(0,0){};
\node (L)at (-1.4,-1){$1$};
\node (R) at(1.4,-1){$2$};
\node (LL)at(-2.4,-2){$1$};
\node (LR)at(-0.4,-2){$3$};
\node (RL)at (0.4,-2){$2$};
\node (RR)at(2.4,-2){$4$};
\draw(root)--(L)--(LL)(L)--(LR)(root)--(R)--(RL)(R)--(RR);
} \\
\tikzpic{-0.5}{[scale=0.6]
\node(root)at(0,0){};
\node(L)at(-1,-1){$1$};
\node(R)at(1,-1){$2$};
\node(LL)at(-2,-2){$1$};
\node(LR)at(0,-2){$3$};
\node(LRL)at(-1,-3){$3$};
\node(LRR)at(1,-3){$4$};
\draw(root)--(L)--(LL)(L)--(LR)--(LRL)(LR)--(LRR)(root)--(R);
}\quad
\tikzpic{-0.5}{[scale=0.6]
\node (root)at(0,0){};
\node (L)at (-1.4,-1){$1$};
\node (R) at(1.4,-1){$2$};
\node (LL)at(-2.4,-2){$1$};
\node (LR)at(-0.4,-2){$4$};
\node (RL)at (0.4,-2){$2$};
\node (RR)at(2.4,-2){$3$};
\draw(root)--(L)--(LL)(L)--(LR)(root)--(R)--(RL)(R)--(RR);
}\quad
\tikzpic{-0.5}{[scale=0.6]
\node(root)at(0,0){};
\node(L)at(-1,-1){$1$};
\node(LL)at(-2,-2){$1$};
\node(LR)at(0,-2){$3$};
\node(LLL)at(-3,-3){$1$};
\node(LLR)at(-1,-3){$4$};
\node(R)at(1,-1){$2$};
\draw(root)--(L)--(LL)--(LLL)(L)--(LR)(LL)--(LLR)(root)--(R);
}
\end{align*}
A permutation corresponding to the labeled binary tree in the middle of the second row 
is $1423$. 
An integer $m\in[2,4]$ appears exactly once in a right node in each labeled binary tree. 

We have five complete binary trees in $\mathcal{T}_{4}$.
The right labeled binary tree in the first row has the same complete binary tree as the 
middle labeled binary tree in the second row.
\end{example}

Let $\sigma$ be a permutation of $[n]$ with $\sigma_1=1$.
One can construct the circular presentation $I(\sigma)$ from 
a labeled binary tree $T(\sigma)$ as follows. 
Let $(\alpha_i,\beta_i)$, $1\le i\le n-1$, be the labels of 
the left and right child of some node.
In the circular presentation, we connect $\alpha_i$ and $\beta_i$
by a line for all $1\le i\le n-1$. 
The middle labeled binary tree in the second row in Example \ref{ex:LBTk1}
consists of the labels $(1,4)$, $(2,3)$ and $(1,2)$.
The circular presentation is given by $\{(1,2),(1,4),(2,3)\}$.
It is easy to see that the pairs $(\alpha_i,\beta_i)$, $1\le i\le n-1$, satisfy the 
conditions for the circular presentation.

As a summary, we have the following proposition.
\begin{prop}
Let $\sigma$ be a permutation as above.
We have a natural bijection between $I(\sigma)$ and $T(\sigma)$.
\end{prop}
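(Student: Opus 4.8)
The plan is to show that the map $\Phi$ introduced just before the statement---which sends an internal node of $T(\sigma)$ whose left and right children carry labels $\alpha$ and $\beta$ to the pair $(\alpha,\beta)$---carries the set of internal nodes of $T(\sigma)$ bijectively onto the edge set $I(\sigma)$. Both objects have exactly $n-1$ elements ($n-1$ internal nodes, and $n-1$ pairs indexed by $j\in\{2,\dots,n\}$), so it suffices to prove that $\Phi$ is a well-defined surjection onto $I(\sigma)$; injectivity and an explicit inverse then follow at once. Since the constructions $\sigma\mapsto I(\sigma)$ and $\sigma\mapsto T(\sigma)$ are each injective on $\{\sigma\in\mathbb{S}_n:\sigma_1=1\}$ and $\Phi$ is read off canonically from either datum, this edge/node correspondence is precisely the asserted natural bijection.

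I would argue by induction on $n$, proving simultaneously two statements: (P1) reading the leaf labels of $T(\sigma)$ from left to right produces the one-line word $1\sigma_2\cdots\sigma_n$ (already noted in the text), and (P2) $\Phi$ maps the internal nodes of $T(\sigma)$ onto $I(\sigma)$. The base cases $n=1$ and $n=2$ are immediate from rules $(\clubsuit1)$ and $(\clubsuit2)$: for $n=2$ the unique internal node has children $1,2$, while $I(12)=\{(1,2)\}$.

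For the inductive step I would single out the position $l$ with $\sigma_l=n$ and let $\sigma'$ be the permutation of $[n-1]$ obtained by deleting $n$. The crucial observation on the diagram side is that, because $\sigma_l=n$ exceeds every other entry, position $l$ is never the maximal index $i$ attached to any later $j$; hence deleting it leaves the value pairs coming from all $j\neq l$ unchanged, giving $I(\sigma)=I(\sigma')\sqcup\{(\sigma_{l-1},n)\}$, where the new edge arises from $j=l$, whose associated index is $i=l-1$. On the tree side, $s_l=l-1$, so rule $(\clubsuit3)$ attaches a new pair of leaves at the $(l-1)$-st leaf of $T(\sigma')$; by (P1) for $\sigma'$ that leaf carries the label $\sigma'_{l-1}=\sigma_{l-1}$, so the freshly created internal node has children $(\sigma_{l-1},n)$---exactly the new edge. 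Combining this with the inductive hypothesis (P2) for $\sigma'$ shows $\Phi$ surjects onto $I(\sigma)$, and a direct check that the replaced leaf re-emerges as a left child of label $\sigma_{l-1}$ with $n$ inserted immediately to its right restores (P1) for $\sigma$.

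The main point to handle carefully is the bookkeeping in this step: verifying that reinserting $n$ disturbs none of the previously recorded value pairs, and that the position-to-label translation via (P1) correctly identifies the attachment leaf as $\sigma_{l-1}$. Once this is in place, distinctness of the images is automatic, since the internal node created when the value $\beta$ is inserted contributes $\beta$ as its larger (right-child) label exactly once, as $\beta$ ranges over $\{2,\dots,n\}$---mirroring the fact (Lemma~\ref{lemma:betaperm}) that $\beta$ takes each value of $\{2,\dots,n\}$ exactly once in $I(\sigma)$. Hence $\Phi$ is a bijection, with inverse assigning to each edge $(\alpha,\beta)$ the unique internal node whose right child is labeled $\beta$.
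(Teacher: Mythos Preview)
Your proposal is correct and uses the same map $\Phi$ (internal node $\mapsto$ pair of child labels) that the paper introduces just before the proposition. The paper itself gives no formal proof here---it only defines $\Phi$, checks one example, and remarks that ``it is easy to see'' the resulting pairs satisfy the conditions for the circular presentation---so your inductive argument, tracking how both $I(\sigma)$ and $T(\sigma)$ change under deletion and reinsertion of the value $n$, supplies exactly the verification the paper leaves to the reader.
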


Recall that the M\"obius function of the partition lattice $\Pi_{n}$ 
is equal to $(-1)^{n-1}(n-1)!$.
By Proposition \ref{prop:Mobiusdc}, the number of maximal decreasing chains 
is also equal to $(n-1)!$.
Since the total number of the labeled tree $T(\sigma)$ is $(n-1)!$,
we have a natural bijection between a labeled tree and a maximal decreasing 
chain.
Suppose that the labels of left and right children of a node $c$ 
are $\alpha$ and $\beta$, respectively.
Then, we say that the node $c$ has a label $(\alpha,\beta)$.
Given a labeled tree $T(\sigma)$, the set of labels $(\alpha,\beta)$ of 
the nodes can be arranged to be a decreasing chain.
Then, since the label of the parent node of $c$ is equal to that of 
its left child from the condition ($\clubsuit3$), it is a routine to check that the chain satisfies the property 
in Lemma \ref{lemma:betaperm}.
As a summary we have the following:
\begin{prop}
A labeled binary tree $T(\sigma)$ is bijective to a maximal decreasing chain 
of the lattice of the partitions.
\end{prop}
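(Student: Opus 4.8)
The plan is to exhibit the claimed bijection as a composition of two maps that are already in hand, and to verify directly that the object produced from a tree is a genuine maximal decreasing chain. First I would define the map $\Phi$ sending a labeled binary tree $T(\sigma)$ to a sequence of labels: to each internal node $c$, whose left and right children carry labels $\alpha$ and $\beta$, associate the pair $(\alpha,\beta)$. Since $T(\sigma)$ has exactly $n-1$ internal nodes, this produces $n-1$ pairs, and I would then list them in reverse lexicographic order with respect to (\ref{eq:deflex}), obtaining a candidate chain. The length $n-1$ already matches the number of covering steps in a maximal chain of $\mathcal{P}_{n}^{(1)}$, since the rank of $\hat{1}$ there is $n-1$.

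The first substantive check is the $\beta$-condition. Reading the right-child labels and inducting on $n$ via ($\clubsuit3$) — where the newly attached right leaf always receives the label $n$ — one sees that each value in $\{2,3,\ldots,n\}$ occurs exactly once as a $\beta$, which is precisely the necessary condition isolated in Lemma \ref{lemma:betaperm}. The heart of the argument is to show that the sorted list of pairs is an admissible sequence of covering relations, i.e.\ that processing them in decreasing order performs successive legitimate block merges in which $\alpha$ and $\beta$ are the minima of the two distinct current blocks being joined. Here condition ($\clubsuit3$), forcing the label of a parent node to equal the label of its left child, is the key structural input: it guarantees that $\alpha$ is still the minimum of its block at the moment the pair $(\alpha,\beta)$ is applied, so the resulting word avoids the forbidden pattern of Eq.\ (\ref{eq:1223}). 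I expect this translation of the recursive tree operation ($\clubsuit3$) into the language of block minima along the chain to be the main obstacle, although it amounts to bookkeeping once the recursion is unwound.

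Finally, for bijectivity I would invoke the two bijections already established. The preceding proposition gives a bijection between $T(\sigma)$ and the circular presentation $I(\sigma)$, and Lemma \ref{lemma:k1dc} gives a bijection between $I(\sigma)$ and a maximal decreasing chain of $\mathcal{P}_{n}^{(1)}$; composing these yields the desired bijection, and one checks that the composite coincides with $\Phi$. Alternatively, since the number of labeled binary trees $T(\sigma)$ equals $(n-1)!$, which by Proposition \ref{prop:Mobiusdc} is exactly the number of maximal decreasing chains of the partition lattice, it suffices to establish injectivity of $\Phi$; this is immediate, because the chain (equivalently the data $I(\sigma)$) determines the permutation $\sigma$, and hence the tree $T(\sigma)$, uniquely.
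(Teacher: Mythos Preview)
Your proposal is correct and follows essentially the same approach as the paper: define the map by reading off the pair $(\alpha,\beta)$ from the children of each internal node, sort these pairs in decreasing order, use condition ($\clubsuit3$) to verify admissibility and the $\beta$-property of Lemma~\ref{lemma:betaperm}, and then conclude bijectivity via the equality of cardinalities $(n-1)!$ together with Proposition~\ref{prop:Mobiusdc}. Your extra route through the preceding proposition and Lemma~\ref{lemma:k1dc} is a valid alternative that the paper does not spell out, but it amounts to the same construction.
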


Given a labeled binary tree $T(\sigma)$, the labels  
on internal nodes of $T(\sigma)$ satisfy the following properties:
\begin{enumerate}[(P1)]
\item A label of a node $c$ is equal to the minimum of the labels 
of leaves in $T_{c}$ where $T_{c}$ is a subtree of $T(\sigma)$ whose 
root is $c$.  
\end{enumerate}
Note that the property (P1) is compatible with the definition (\ref{eq:alphabeta}) of a label.

\begin{remark}
In a labeled binary tree, labels in right nodes take values in $\{2,3,\ldots,n\}$ and 
each integer appears exactly once. This fact comes from the conditions ($\diamondsuit1$) 
and ($\diamondsuit2$) in Section \ref{sec:cr}. On the contrary, the multiplicity of 
an integer in $\{1,2,\ldots,n-1\}$ on left nodes may be larger than one or zero.
\end{remark}

\begin{example}
The six maximal decreasing chains for $(n,k)=(4,1)$ corresponding to the 
labeled binary trees in Example \ref{ex:LBTk1} are 
\begin{align*}
&\genfrac{(}{)}{0pt}{}{3}{4}\genfrac{(}{)}{0pt}{}{2}{3}\genfrac{(}{)}{0pt}{}{1}{2}, \qquad
\genfrac{(}{)}{0pt}{}{2}{4}\genfrac{(}{)}{0pt}{}{2}{3}\genfrac{(}{)}{0pt}{}{1}{2}, \qquad
\genfrac{(}{)}{0pt}{}{2}{4}\genfrac{(}{)}{0pt}{}{1}{3}\genfrac{(}{)}{0pt}{}{1}{2}, \\
&\genfrac{(}{)}{0pt}{}{3}{4}\genfrac{(}{)}{0pt}{}{1}{3}\genfrac{(}{)}{0pt}{}{1}{2}, \qquad
\genfrac{(}{)}{0pt}{}{2}{3}\genfrac{(}{)}{0pt}{}{1}{4}\genfrac{(}{)}{0pt}{}{1}{2}, \qquad
\genfrac{(}{)}{0pt}{}{1}{4}\genfrac{(}{)}{0pt}{}{1}{3}\genfrac{(}{)}{0pt}{}{1}{2}.
\end{align*}
\end{example}

Fix $k\ge2$. 
We will show a correspondence between a labeled binary tree and a maximal decreasing 
chain of $\mathcal{L}_{n}^{(k)}$.
Let $T\in\mathcal{T}_{n}$ be a complete binary tree with $n$ leaves.
Suppose that an internal node $c$ of $T$ has two binary trees, a left tree $B_{L}(c)$ and 
a right subtree $B_{R}(c)$. We denote by $c_L$ and $c_{R}$ the root nodes of $B_{L}(c)$ and $B_{R}(c)$.
Suppose that a labeled tree $L(T)$ (resp. $L'(T')$) has $n$ labels $l_1<l_2<\ldots<l_{n}$ 
(resp. $l'_1<l'_2<\ldots<l'_{n}$).
We say that a labeled tree $L(T)$ is {\it isomorphic} to $L'(T')$ if they have the same complete binary 
tree $T$, i.e., $T=T'$ and we obtain $L'(T')$ from $L(T)$ by mapping the labels $l_i$ to $l'_i$. 
We say that a labeled tree $L(T)$ is {\it canonical} if the labels of $L(T)$ are in $[n]$.

A labeled binary tree $L(T)$ for $k\ge2$ satisfies the following conditions:
\begin{enumerate}[($\spadesuit1$)]
\item A node takes a value $l$ in $[n]$ with a subscript $k'\in[k]$, i.e., the label 
is of the form $l_{k'}$. 
Each integer $l\in[n]$ appears exactly once in $n$ leaves.
\item Suppose the labels of $c_{L}$ and $c_{R}$ are $l_{k'}$ and $m_{k''}$.
\begin{enumerate}
\item We have $l<m$.
\item The subscripts of these two nodes are the same, i.e., $k'=k''$.
\end{enumerate}
These mean that the integer label of $c_L$ is smaller than that of $c_{R}$ in the 
linear order (\ref{eq:linord}).
\item The subscripts are weakly increasing from a leaf to the root.
\item
We say that the node $c$ is a {\it descent} if the label $l_{k'}$ of $c$ is smaller than 
the label $l'_{k''}$ of the left child.
If $c$ is a descent, we have $k'>k''$.
\item Suppose a node $c$ is not the root.
\begin{enumerate}
\item If $c$ is a right child of some node, the integer label of $c$ is equal to 
an integer label in $B_L(c)$ and $B_R(c)$, which is not a label of a right node of some node.
\item If $c$ is a left child of some node, the the integer label of $c$ is 
equal to one of integer labels in $B_L(c)$ and $B_R(c)$.
\end{enumerate} 
\item Suppose that $L_{i}$ be a labeled subtree of $L(T)$ whose labels have a subscript $i\in[k]$.
Then, $L_{i}$ is isomorphic to a canonical labeled tree.
\end{enumerate}

\begin{remark}
Some remarks are in order.
\begin{enumerate}
\item
From the conditions ($\spadesuit1$) and ($\spadesuit2$), the left-most leaf of 
$L(T)$ has an integer label $1$, whose subscript is in $[k]$.
\item The condition ($\spadesuit1$) implies that we may have the same labels on internal nodes.
\item Let $l_{q}$ be a label of a node $c$ and $l'_{p}$ a label of the left child of $c$.
The conditions ($\spadesuit3$) and ($\spadesuit4$) imply that $p<q$ if $l>l'$.
Therefore, labels of the nodes are weakly decreasing from a leaf to the root.
\item The condition ($\spadesuit5$) implies that a label in $[2,n]$ appears exactly 
once as a label of the right child of some node.
\item The conditions ($\spadesuit4$) and ($\spadesuit5$) are a generalization of 
the property (P1) for $k\ge2$.
\end{enumerate}
\end{remark}

\begin{defn}
\label{defn:LBT}
We denote by $\mathcal{LT}^{(k)}_{n}$ the set of labeled binary trees $L(T)$ 
satisfying the conditions ($\spadesuit1$) to ($\spadesuit6$) such that  
it has $n$ leaves with at most $n-2$ descents and the label of the left child of the root 
is not $1_{k}$.
\end{defn}

As we will see below, the last condition in Definition \ref{defn:LBT} is necessary to 
consider a maximal decreasing chain from $\hat{0}$ to $\hat{1}$ in the poset $\mathcal{L}_{n}^{(k)}$.
In fact, this condition is compatible with the fact that we have a label $(1,n)_{k}$ from $\pi$ 
to $\hat{1}$ where $\rho(\pi)=n-1$.

\begin{example}
\label{ex:lbt}
Let $(n,k)=(4,2)$ and $L(T)$ the following labeled binary tree:
\begin{align*}
L(T)= 
\tikzpic{-0.5}{[scale=0.7]
\node (root) at (0,0){};
\node (L) at (-1,-1){$3_2$};
\node (LL) at (-2,-2){$1_1$};
\node (LLL) at (-3,-3){$1_1$};
\node (R) at (1,-1){$4_2$};
\node (LR) at (0,-2){$2_1$};
\node (LLR) at (-1,-3){$3_1$};
\draw (root)--(L)--(LR)(root)--(R)(L)--(LL)--(LLR)(LL)--(LLL); 
}
\end{align*}
We have one descent at the internal node with a label $3_2$.

Simiarly, the labeled binary tree
\begin{align*}
\tikzpic{-0.5}{[scale=0.7]
\node (root) at (0,0){};
\node (L) at (-1,-1){$1_2$};
\node (LL) at (-2,-2){$1_1$};
\node (R) at (1,-1){$2_2$};
\node (LR) at (0,-2){$3_1$};
\draw (root)--(L)--(LR)(root)--(R)(L)--(LL);
}
\end{align*}
has a descent at the node $c$ with the label $1_2$. Note that the label $1_2$ at $c$ 
has the same integer label as the label $1_1$ of the left child of $c$, but a different subscript.
\end{example}

\begin{theorem}
\label{thrm:mdclbt}
A maximal decreasing chain in $\mathcal{L}^{(k)}_{n}$ is bijective 
to a labeled binary tree in $\mathcal{LT}_{n}^{(k)}$.
\end{theorem}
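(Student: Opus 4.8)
The plan is to construct the bijection explicitly in both directions and to verify that the two maps are mutually inverse by induction on the number $n$ of leaves. Throughout I would pass freely between a maximal decreasing chain of $\mathcal{L}_n^{(k)}$ and the colored circular diagram $(\tau,\mathbf{c})$ associated to it by Lemma~\ref{lemma:taucmdc}; this reduces the statement to a correspondence between colored diagrams and trees in $\mathcal{LT}_n^{(k)}$, and it lets me reuse the unlabeled ($k=1$) construction $(\clubsuit1)$--$(\clubsuit3)$ as the skeleton of the argument. The base cases $n=1,2$ are immediate, matching the trees of $(\clubsuit1)$ and $(\clubsuit2)$.

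For the map from chains to trees I would argue by induction on $n$. The key input is that, in any maximal chain, the value $\beta=n$ occurs as the larger coordinate of a label $(\alpha,\beta)_l$ exactly once (the proposition extending Lemma~\ref{lemma:betaperm}). Deleting that single covering step, and erasing the point $n$ from every block of every partition in the chain, produces a maximal decreasing chain of $\mathcal{L}_{n-1}^{(k)}$ on the point set $[n-1]$; by the induction hypothesis this shorter chain corresponds to a tree $L(T')\in\mathcal{LT}_{n-1}^{(k)}$. I then recover $L(T)$ from $L(T')$ exactly as in $(\clubsuit3)$: I graft a new internal node together with two new leaves at the leaf of $L(T')$ carrying the representative $\alpha$, label the new right leaf $n_{l}$, copy the parent's label to the new left leaf, and propagate the subscript $l$ upward. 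The integer labels are forced by $(\spadesuit5)$---the right child receives the ``new'' value $n$, and every internal or left label equals an admissible representative of the block beneath it---while the subscripts are dictated by the layers $l$ appearing in the chain, so that $(\spadesuit1)$ and $(\spadesuit2)$ hold. Weak monotonicity of the subscripts from the leaves to the root, $(\spadesuit3)$, and the descent rule $(\spadesuit4)$ are precisely the translation of the order \eqref{eq:deflex} into the tree, and they must be checked to survive the grafting step.

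For the inverse map I attach to each internal node $c$ of $L(T)\in\mathcal{LT}_n^{(k)}$ the label $(\alpha_c,\beta_c)_{l_c}$, where $\alpha_c$ and $\beta_c$ are read off from the two children via $(\spadesuit4)$ and $(\spadesuit5)$ and $l_c$ from the subscripts. This produces $n-1$ labels; I sort them into a strictly decreasing sequence with respect to \eqref{eq:deflex} and finally append the forced terminal label $(1,n)_k$ of the edge to $\hat1$. I must verify that the resulting word is a genuine unrefinable chain from $\hat0$ to $\hat1$, i.e. that each successive label is admissible in the sense of $(\diamondsuit1)$--$(\diamondsuit2)$ and that the sequence is strictly decreasing. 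Here the two extra hypotheses of Definition~\ref{defn:LBT} enter in an essential way: the bound of at most $n-2$ descents guarantees that the listed labels can be strictly ordered, and the requirement that the left child of the root is not $1_k$ is exactly what makes the appended $(1,n)_k$ strictly smaller than the label of the last genuine merge, so that decreasingness is not destroyed at the top. That the two constructions undo one another then follows because both are governed by the same recursion on $n$ and by Lemma~\ref{lemma:taucmdc}, so the final step is a single induction checking that grafting and pruning are inverse operations compatible with the sorting.

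I expect the main obstacle to be the interaction between the subscript (layer) data and the strict-decrease condition on the chain. For $k=1$ this issue is invisible, since there is a single layer and the node labels are just block minima (property (P1)); for $k\ge2$ a node label need not be a block minimum, descents can occur, and the generalized rules $(\spadesuit4)$--$(\spadesuit5)$ are what replace (P1). The delicate point is therefore to show that conditions $(\spadesuit1)$--$(\spadesuit6)$ together with the two constraints of Definition~\ref{defn:LBT} cut out \emph{exactly} the trees whose sorted node-labels form an admissible strictly decreasing maximal chain---neither more nor fewer---and in particular that the layer bookkeeping never forces a tie or an inadmissible step. Verifying this equivalence condition by condition, and confirming that the grafting of $(\clubsuit3)$ respects every $(\spadesuit)$, is where the real work lies.
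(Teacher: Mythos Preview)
Your tree-to-chain map---reading the label $(\alpha_c,\beta_c)_{l_c}$ from each internal node, sorting decreasingly, and appending $(1,n)_k$---matches the paper's construction. The difficulty is in the other direction, and it is a genuine gap, not just a missing verification.

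Your inductive grafting rule (delete the step $(\alpha,n)_l$, build $L(T')$ on $[n-1]$, then graft two new leaves at the leaf of $L(T')$ labelled $\alpha$) is correct only for $k=1$. For $k\ge2$ the new internal node may have to enter the tree at a non-leaf position, so grafting at a leaf produces the wrong tree shape. Take the paper's own Example~\ref{ex:lbt}: the maximal decreasing chain $(1,3)_1(1,2)_1(3,4)_2(1,4)_2$ of $\mathcal{L}_4^{(2)}$. Removing the step $(3,4)_2$ yields a chain in $\mathcal{L}_3^{(2)}$ whose tree $L(T')$ has leaves $1_1,3_1,2_1$ and all subscripts equal to $1$. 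Your rule grafts at the leaf $3_1$. But the correct $L(T)$ has the new internal node \emph{above the entire tree} $L(T')$, with left child $3_2$ and right child $4_2$. No relabelling of subscripts (``propagate $l$ upward'') fixes this: the underlying complete binary shapes already differ. The phenomenon is intrinsic to $k\ge2$, because the order \eqref{eq:deflex} allows $(3,4)_2<(1,2)_1$, so the merge involving $n$ need not sit at a leaf of the tree built from the remaining labels.

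The paper avoids this by abandoning induction on $n$ in the chain-to-tree direction. It partitions the non-terminal labels of the chain by their subscript, builds a canonical $(k{=}1)$ labeled forest $L_{k'}$ for each layer $k'$ separately, and then stacks $L_{k'}$ on top of $L_{k'-1}$ by identifying each leaf of $L_{k'}$ with the root of the appropriate component of $L_{k'-1}$ via condition $(\spadesuit5)$. This layer-by-layer assembly is precisely what places each merge at its correct height in the tree. If you wish to salvage an inductive argument on $n$, you must replace ``graft at the leaf labelled $\alpha$'' by a rule that locates the correct edge of $L(T')$ using both $\alpha$ and the layer $l$; but specifying and justifying that rule is essentially equivalent to the paper's stacking construction.
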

\begin{remark}
\label{remark:LBT}
The condition that the label of the left child of the root is not $1_{k}$ in 
Definition \ref{defn:LBT} implies that all the labels constructed from 
a labeled binary tree is larger than $(1,n)_{k}$ in the linear 
order (\ref{eq:deflex}). 
Therefore, if we construct a decreasing chain 
from a labeled tree, it is a decreasing chain in $\mathcal{P}_{n}^{(k)}$.
Since $\mathcal{L}_{n}^{(k)}$ is obtained from $\mathcal{P}_{n}^{(k)}$ by 
adding $\hat{1}$, a decreasing chain in $\mathcal{P}_{n}^{(k)}$ constructed 
from a labeled tree can be naturally extended to a maximal decreasing chain in $\mathcal{L}_{n}^{(k)}$.
On the contrary, if the label of the left child of the root is $1_{k}$, this sequence  of labels
can not be a maximal decreasing chain 
in the lattice $\mathcal{L}_{n}^{(k)}$.
\end{remark}
\begin{proof}[Proof of Theorem \ref{thrm:mdclbt}]
Pick an internal node $c$ with two leaves.
Suppose that $\alpha_{l}$ (resp. $\beta_{l}$) be the label of the left 
(resp. right) child of $c$.
From the condition ($\spadesuit2$), the two labels have the same subscript.
Then, we assign the label $(\alpha,\beta)_{l}$ to the node $c$.
Note that from the condition ($\spadesuit2$), we always have $\alpha<\beta$.
Then, we delete the two leaves from the tree and obtain a labeled tree 
with $n-1$ leaves.
By continuing this process until we delete all the leaves,
we obtain a sequence of labels. 
This sequence of labels corresponds to the order of deleted internal nodes.

We will show that one can construct a maximal decreasing chain from this sequence of labels.
When the node $c$ is a descent, by the conditions ($\spadesuit4$) and ($\spadesuit5$), 
the label of $c$ is strictly smaller than that 
of the left child $c_{L}$.
In the deletion of nodes, we delete the node $c_{L}$ first and then $c$.
Thus, we have a decreasing sequence for these two nodes.
Suppose we have several choices to pick an internal node $c$ with two leaves.
Let $c_1,c_2,\ldots, c_{p}$ be such internal nodes, and 
$A_{i}:=(\alpha_{i},\beta_{i})_{l}$ the label corresponding to the node $c_{i}$.
From the conditions ($\spadesuit1$), ($\spadesuit2$) and ($\spadesuit3$), it is easy to see that 
we have a unique order of $A_{i}$ such that 
\begin{align*}
A_{i_1}>A_{i_2}>\ldots>A_{i_p},
\end{align*}
where $(i_1,i_2,\ldots,i_{p})$ is a permutation in $[p]$.
This implies that we have a decreasing chain if we delete the internal nodes 
in the order of $c_{i_1}, c_{i_2},\ldots,c_{i_p}$. 
After deleting $p$ internal nodes, we continue to delete nodes in a similar 
way until we delete all nodes.
We obtain a decreasing chain consisting of $n$ labels. 
As already mentioned as above in Remark \ref{remark:LBT},
to construct a maximal chain of the lattice $\mathcal{L}_{n}^{(k)}$, we 
have to append a label $(1,n)_{k}$ to the chain.
Recall that we have $\pi\xrightarrow{(1,n)_{k}}\hat{1}$ where 
$\rho(\pi)=n-1$.
By definition of the set $\mathcal{LT}^{(k)}_{n}$ (see Definition \ref{defn:LBT}), 
the label of the left child of the root is not $1_{k}$. 
This implies that all labels obtained from the labeled binary tree are larger 
than $(1,n)_{k}$ in the linear order (\ref{eq:linord}).
Combining these observations together, we have a unique decreasing chain 
corresponding to a labeled tree in $\mathcal{LT}^{(k)}_{n}$.

Conversely, suppose that a maximal decreasing chain is given.
Then, we first erase the label $(1,n)_{k}$ from the chain.
Let $l_1,\ldots,l_{n-1}$ be the labels of the decreasing chain.
We divide the labels into $k$ groups according to the subscript $k'\in[k]$.
Recall the linear order of labels is Eq. (\ref{eq:deflex}) and the chain 
is decreasing.
Let $L_{k'}$ be the chain of labels with the subscript $k'$.
The decreasing chain is expressed as $L_1L_2\ldots L_{k}$. 
Note that each $L_{k'}$ may contain several blocks as a weighted partition, and 
the labeled binary tree of an each block is isomorphic to a canonical tree.
Thus $L_{k'}$ may be a labeled forest.
To obtain a labeled tree, we construct a labeled tree from the labeled trees 
for blocks.
We put the labeled tree $L_{k'}$ on the top of $L_{k'-1}$ in such a way that 
a leaf of $L_{k'}$ is the root of $L_{k'-1}$ satisfying the condition ($\spadesuit5$).
By the condition ($\spadesuit1$), we have exactly one way to patch the two trees 
with subscripts $k'$ and $k'-1$.
Then, by construction, the newly obtained labeled tree satisfies 
all the conditions from ($\spadesuit1$) to ($\spadesuit6$).
Thus, we obtain a labeled tree from a maximal decreasing chain, which completes the proof.
\end{proof}

A labeled tree in Example \ref{ex:lbt} corresponds to 
the following maximal decreasing chain:
\begin{align*}
\hat{0}:\genfrac{(}{)}{0pt}{}{1}{3}_{1}\genfrac{(}{)}{0pt}{}{1}{2}_{1}
\genfrac{(}{)}{0pt}{}{3}{4}_{2}\genfrac{(}{)}{0pt}{}{1}{4}_{2}.
\end{align*}
We append the label $(1,4)_{2}$ to the deceasing chain obtained from 
the labeled tree.

From Proposition \ref{prop:Mobiusdc} and Theorem \ref{thrm:mdclbt}, 
we have the following:
\begin{cor}
The number of labeled binary trees is given by
\begin{align*}
\left|\mathcal{LT}_{n}^{(k)}\right|=(-1)^{n}\mu(\mathcal{L}_{n}^{(k)}).
\end{align*}
\end{cor}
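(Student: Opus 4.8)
The plan is to derive this corollary as a formal consequence of the two immediately preceding results, Proposition~\ref{prop:Mobiusdc} and Theorem~\ref{thrm:mdclbt}, so that the only real content is the correct bookkeeping of the sign. First I would recall that $\mathcal{L}_{n}^{(k)}$ is a bounded graded lattice of rank $n$ (so $\rho(\hat 0,\hat 1)=n$) which admits an $EL$-labeling, as established in Section~\ref{sec:EL}. This makes the hypotheses of Proposition~\ref{prop:Mobiusdc} available and places us squarely in the Björner framework for shellable posets.

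Next I would relate $\mu(\mathcal{L}_n^{(k)})$ to the enumeration of maximal decreasing chains from $\hat 0$ to $\hat 1$. By Proposition~\ref{prop:Mobiusdc} the M\"obius function is controlled by this count; to pin down the sign I would combine it with the Cohen--Macaulay property (Theorem~\ref{thrm:lsCM}) and the Euler--Poincar\'e formula, which give $(-1)^{n}\mu(\mathcal{L}_n^{(k)})=\dim_{\mathsf{k}}\tilde H_{n-2}(\mathcal{L}_n^{(k)},\mathsf{k})\ge 0$, the top homology dimension being exactly the number of maximal decreasing chains. Hence the number of maximal decreasing chains from $\hat 0$ to $\hat 1$ equals $(-1)^{n}\mu(\mathcal{L}_n^{(k)})$.

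Then I would invoke Theorem~\ref{thrm:mdclbt}, which furnishes a bijection between maximal decreasing chains in $\mathcal{L}_n^{(k)}$ and labeled binary trees in $\mathcal{LT}_n^{(k)}$. This bijection yields at once that $\left|\mathcal{LT}_n^{(k)}\right|$ equals the number of maximal decreasing chains from $\hat 0$ to $\hat 1$. Chaining this with the identity of the previous paragraph gives $\left|\mathcal{LT}_n^{(k)}\right|=(-1)^{n}\mu(\mathcal{L}_n^{(k)})$, which is the claim.

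I do not anticipate any genuine obstacle, since the statement is a direct corollary of results already proved. The single point demanding care is the sign appearing in Proposition~\ref{prop:Mobiusdc}: read literally it equates $\mu$ with a nonnegative count, whereas by Theorem~\ref{thrm:Gamma} we have $\mu(\mathcal{L}_n^{(k)})=(-1)^{n}\prod_{j=0}^{n-2}(k(j+1)-1)$, which alternates in sign with $n$. Reconciling these through the rank-$n$ factor $(-1)^{n}$ supplied by the Cohen--Macaulay and Euler--Poincar\'e argument is the one step I would spell out explicitly so that the chain of equalities is unambiguous.
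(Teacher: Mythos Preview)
Your proposal is correct and follows the same route as the paper, which derives the corollary directly from Proposition~\ref{prop:Mobiusdc} and Theorem~\ref{thrm:mdclbt} without further argument. Your explicit handling of the sign via the rank-$n$ factor and the Cohen--Macaulay property is a useful clarification, since the paper's statement of Proposition~\ref{prop:Mobiusdc} is informal about signs.
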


\subsection{The Whitney numbers of the first kind and the characteristic polynomial}
The {\it Whitney numbers} $w_{r}^{(n,k)}$ of the first kind 
are defined by 
\begin{align*}
w_{r}^{(n,k)}:=\sum_{\pi\in\mathcal{P}_{n}^{(k)}(r)}\mu(\hat{0},\pi),
\end{align*}
where $\mathcal{P}_{n}^{(k)}(r)$ is the set of weighted partitions with $r$ blocks 
defined in Definition \ref{defn:Pnk}.
Note that $w_{n}^{(n,k)}=1$ since the least element ($\rho(\hat{0})=0$) is unique and 
it has $n$ blocks.
The {\it characteristic polynomial} $P_{n}^{(k)}(x)$ of $\mathcal{P}_{n}^{(k)}$ 
is defined as an ordinary generating function by 
\begin{align}
\label{eq:Pnkxw}
P_{n}^{(k)}(x):=\sum_{r=1}^{n}w_{r}^{(n,k)}x^{r}.
\end{align}
The specialization $x=1$ in Eq. (\ref{eq:Pnkxw}) gives 
\begin{align}
\label{eq:Pnkx1}
P_{n}^{(k)}(x=1)=-\mu(\mathcal{L}_{n}^{(k)}),
\end{align}
by the definition of the M\"obius function.

\begin{prop}
\label{prop:charpol}
The characteristic polynomial $P_{n}^{(k)}(x)$ of $\mathcal{P}_{n}^{(k)}$ 
is given by 
\begin{align}
\label{eq:Pnkprod}
P_{n}^{(k)}(x)=\prod_{j=0}^{n-1}(x-kj).
\end{align} 
\end{prop}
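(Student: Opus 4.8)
The plan is to write $P_n^{(k)}(x)$ as a signed generating function for maximal decreasing chains and then to put that generating function into product form. Since $\mathcal{L}_n^{(k)}$ is $EL$-shellable, every interval $[\hat0,\pi]$ is again $EL$-shellable (restrict the labeling), so applying Proposition \ref{prop:Mobiusdc} to $[\hat0,\pi]$ gives $\mu(\hat0,\pi)=(-1)^{\rho(\pi)}d(\pi)$, where $d(\pi)$ counts maximal decreasing chains from $\hat0$ to $\pi$ and $\rho(\pi)=n-b(\pi)$, with $b(\pi)$ the number of first-layer blocks. Substituting into $P_n^{(k)}(x)=\sum_{\pi}\mu(\hat0,\pi)x^{b(\pi)}$ and grouping the weighted partitions by the common length $\ell=\rho(\pi)$ of the chains reaching them, I would obtain
\begin{align*}
P_n^{(k)}(x)=\sum_{\ell=0}^{n-1}(-1)^{\ell}D_{\ell}\,x^{n-\ell},
\end{align*}
where $D_{\ell}$ is the total number of maximal decreasing chains of length $\ell$ starting at $\hat0$, equivalently the number of decreasing chains from $\hat0$ to an element of rank $\ell$.

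Next I would count $D_\ell$ through the colored-diagram description of decreasing chains from Lemma \ref{lemma:taucmdc}. The essential point is that a decreasing chain ending at a general element of $\mathcal{P}_n^{(k)}$, rather than at $\hat1$, carries no constraint coming from the terminal label $(1,n)_{k}$, so the coloring becomes free. Building the associated diagram $\tau\in\mathcal{C}(n,n-\ell)$ by scanning $j=2,\ldots,n$, each $j$ is either a root of its block or is joined to a parent $i\in\{1,\ldots,j-1\}$ carrying one of the $k$ layer labels, with exactly $\ell$ of the $j$'s non-roots. In the computation of $\mu(\mathcal{L}_n^{(k)})$ in Theorem \ref{thrm:Gamma} the edges incident to $1$ were limited to $k-1$ colors precisely because a layer-$k$ edge from $1$ would fail to exceed $(1,n)_{k}$; that restriction disappears here. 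The count then factorizes over the independent choices for each $j$, giving
\begin{align*}
\sum_{\ell=0}^{n-1}D_{\ell}\,u^{\ell}=\prod_{j=2}^{n}\bigl(1+k(j-1)u\bigr).
\end{align*}

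Finally I would substitute this product into the previous display. Writing $P_n^{(k)}(x)=x^{n}\sum_{\ell}D_{\ell}(-x^{-1})^{\ell}$ and using the product formula at $u=-x^{-1}$ yields
\begin{align*}
P_n^{(k)}(x)=x^{n}\prod_{j=2}^{n}\Bigl(1-\frac{k(j-1)}{x}\Bigr)=x\prod_{j=2}^{n}\bigl(x-k(j-1)\bigr)=\prod_{j=0}^{n-1}(x-kj),
\end{align*}
after reindexing $i=j-1$ and absorbing the leading $x$ as the $j=0$ factor, which is the claim. As consistency checks, the lowest term gives $w_1^{(n,k)}=(-1)^{n-1}k^{n-1}(n-1)!$, and setting $x=1$ reproduces $P_n^{(k)}(1)=-\mu(\mathcal{L}_n^{(k)})$ of Theorem \ref{thrm:Gamma}.

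The step I expect to be the main obstacle is the free-coloring count of $D_\ell$. One must verify carefully that the bijection of Lemma \ref{lemma:taucmdc} extends to decreasing chains ending at arbitrary elements, and that, in the absence of the terminal label, every assignment of a parent in $\{1,\ldots,j-1\}$ and a layer in $[k]$ to each connected $j$ produces a genuinely valid and distinct decreasing chain, so that conditions $(\diamondsuit1)$ and $(\diamondsuit2)$ impose no further restriction. The remaining steps, namely the multiplicativity $\mu(\hat0,\pi)=(-1)^{\rho(\pi)}d(\pi)$ on intervals and the algebraic substitution, are routine once this count is in place.
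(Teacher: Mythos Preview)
Your proposal is correct and takes a genuinely different route from the paper's argument. The paper proceeds by induction on $n$: it invokes the multiplicativity $\mu(\hat0,\pi)=\prod_i\mu(\hat0,\pi_i)$ (Lemma~\ref{lemma:muprod}) to isolate the block containing~$1$, derives a convolution recurrence for the Whitney numbers, matches it with the standard recurrence for $s(n,r)$ to obtain $w_r^{(n,k)}=k^{n-r}s(n,r)$ for $r\ge 2$, and then appeals to Theorem~\ref{thrm:Gamma} to eliminate a possible error term $C_0x$ in the $r=1$ coefficient.

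Your argument instead counts $D_\ell$ directly via colored forests, bypassing both the induction and the appeal to Theorem~\ref{thrm:Gamma}; the M\"obius function of $\mathcal{L}_n^{(k)}$ becomes a corollary rather than an input. The ``main obstacle'' you flag does in fact go through: given a colored forest and the induced labels sorted decreasingly, at step~$i$ the incoming edge of $\beta_i$ in $\tau$ is $L_i$ itself, so $\beta_i$ is a root (hence minimal) in the subforest on $\{L_1,\dots,L_{i-1}\}$, verifying $(\diamondsuit1)(a)$; and for $(\diamondsuit1)(b)$ one observes that among previously applied edges only those with the \emph{same} subscript $l_i$ contribute to the layer-$l_i$ partition (any $L_j>L_i$ with $l_j\ge l_i$ forces $l_j=l_i$), and $\alpha_i$'s incoming edge $(p(\alpha_i),\alpha_i)_{c(\alpha_i)}$ cannot satisfy $p(\alpha_i)>\alpha_i$, so $\alpha_i$ is also minimal in its layer-$l_i$ block. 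The converse direction follows since each $\beta$ value occurs at most once by $(\diamondsuit1)(a)$. What your approach buys is a uniform one-shot derivation of all Whitney numbers; what the paper's approach buys is that it isolates the structural reason (block-multiplicativity) behind the factorization, at the cost of needing the top M\"obius value as a separate input.
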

Before proceeding to the proof of Proposition \ref{prop:charpol}
we introduce the following lemma.

\begin{lemma}
\label{lemma:muprod}
Let $\pi\in\mathcal{P}_{n}^{(k)}$ be a weighted partition with $r$ blocks. 
We denote by $\pi_i$, $1\le i\le r$, the weighted partition corresponding to 
a block $B_{i}$.
Then, we have 
\begin{align}
\label{eq:pmu}
\mu(\hat{0},\pi)=\prod_{i=1}^{r}\mu(\hat{0},\pi_{i}).
\end{align}
\end{lemma}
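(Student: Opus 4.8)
The plan is to reduce the claim to the standard multiplicativity of the M\"obius function over a direct product of posets, by showing that the interval $[\hat{0},\pi]$ factors according to the first-layer blocks $B_{1},\ldots,B_{r}$ of $\pi$. Once the interval is identified with such a product, Eq.~(\ref{eq:pmu}) is purely formal.

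First, I would record that the order on $\mathcal{P}_{n}^{(k)}$ is detected layer by layer. Writing $x^{(l)}$ for the partition of the $l$-th layer of $x$, the meet was defined by $(x\wedge y)^{(l)}=x^{(l)}\wedge y^{(l)}$ for each $l\in[k]$; since $x\le y$ is equivalent to $x\wedge y=x$ in any lattice, we obtain that $x\le y$ holds in $\mathcal{P}_{n}^{(k)}$ if and only if $x^{(l)}\le y^{(l)}$ in $\mathcal{P}_{n}^{(1)}$ for every $l$.

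Next, I would establish a poset isomorphism
\begin{align*}
[\hat{0},\pi]\cong\prod_{i=1}^{r}[\hat{0},\pi_{i}],
\end{align*}
where $[\hat{0},\pi_{i}]$ denotes the interval below $\pi_{i}$ in the weighted partition lattice on the block $B_{i}$. Given $\sigma\le\pi$, the relation $\sigma^{(1)}\le\pi^{(1)}$ forces each first-layer block of $\sigma$ to sit inside a unique block $B_{i}$ of $\pi$, and the nesting $\sigma^{(1)}\ge\sigma^{(2)}\ge\cdots\ge\sigma^{(k)}$ then forces every block of every layer of $\sigma$ to lie inside a single $B_{i}$. Hence $\sigma$ is determined by its restrictions $\sigma_{i}$ to the blocks $B_{i}$, each a weighted partition of $B_{i}$ with $\sigma_{i}\le\pi_{i}$, and conversely any tuple $(\sigma_{1},\ldots,\sigma_{r})$ with $\sigma_{i}\le\pi_{i}$ reassembles uniquely into some $\sigma\le\pi$. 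Because the order is tested layer by layer on the disjoint index sets $B_{1},\ldots,B_{r}$, the relation $\sigma\le\tau$ in $[\hat{0},\pi]$ is equivalent to $\sigma_{i}\le\tau_{i}$ for all $i$, which yields the isomorphism; under it $\hat{0}$ corresponds to $(\hat{0},\ldots,\hat{0})$ and $\pi$ to $(\pi_{1},\ldots,\pi_{r})$. Finally, invoking the product formula for the M\"obius function of a direct product of posets \cite{Sta97b1} gives
\begin{align*}
\mu(\hat{0},\pi)=\prod_{i=1}^{r}\mu(\hat{0},\pi_{i}).
\end{align*}

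The main obstacle is the middle step, namely verifying that restriction to a block and reassembly are mutually inverse operations on weighted partitions. In particular one must check that both operations respect the layer nesting $B_{i}^{(1)}\supseteq B_{i}^{(2)}\supseteq\cdots$ and that condition $(\star)$, being a condition internal to each block, is preserved in both directions; this is where the combinatorial content lies, whereas the reduction to the product theorem is routine once the interval isomorphism is in hand.
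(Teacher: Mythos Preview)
Your proposal is correct and takes a genuinely different route from the paper. The paper's proof relies on the $EL$-labeling established in Section~\ref{sec:EL}: by Proposition~\ref{prop:Mobiusdc}, $|\mu(\hat{0},\pi)|$ counts maximal decreasing chains in $[\hat{0},\pi]$, and since a maximal decreasing chain in $[\hat{0},\pi]$ is obtained by interleaving (in the unique order dictated by the total order on labels) one maximal decreasing chain from each $[\hat{0},\pi_i]$, the product formula follows immediately. Your argument instead establishes the interval isomorphism $[\hat{0},\pi]\cong\prod_{i}[\hat{0},\pi_i]$ directly from the layer-by-layer description of the meet, and then invokes the classical multiplicativity of the M\"obius function over direct products. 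Your route is more elementary and self-contained, since it does not depend on the $EL$-shellability machinery; the paper's route is shorter once the $EL$-labeling is already in hand and has the advantage of making transparent why the signs match (both sides count chains). The verification you flag as the ``main obstacle'' is in fact routine here, since condition~($\star$) and the layer nesting are block-local conditions.
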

\begin{proof}
From Proposition \ref{prop:Mobiusdc}, $\mu(\hat{0},\pi)$ is equal to 
the number of maximal decreasing chains from $\hat{0}$ to $\pi$.
Since $\pi$ consists of $r$ weighted partitions $\pi_{i}$, $1\le i\le r$, 
$\mu(\hat{0},\pi_i)$ is equal to the number of maximal decreasing 
chains from $\hat{0}$ to $\pi_i$.
Given maximal decreasing chains for $\pi_{i}$, $1\le i\le r$, we have a unique 
maximal decreasing chains from $\hat{0}$ to $\pi$.
Thus, it is straightforward to see that Eq. (\ref{eq:pmu}) holds.
\end{proof}

\begin{proof}[Proof of Proposition \ref{prop:charpol}]
When $n=2$, the weighted partitions are $\hat{0}=1/2$ and 
$(12)^{k'}$ with $1\le k'\le k$.
Then, it is easy to see that, by a simple calculation, we have 
\begin{align*}
P_{2}^{(k)}(x)&=x^2-kx, \\
&=x(x-k),
\end{align*}
which implies Eq. (\ref{eq:Pnkprod}) holds for $n=2$.

We prove the proposition by induction on $n$. 
Suppose Eq. (\ref{eq:Pnkprod}) holds for up to $n-1$.
We expand Eq. (\ref{eq:Pnkprod}) with respect to $x$ in the case of $n-1$. 
By definition of characteristic polynomials and of the Stirling numbers 
of the first kind, we have 
\begin{align}
\label{eq:wnkinStirS1}
w^{(n-1,k)}_{r}=k^{n-1-r}s(n-1,r),
\end{align}  
where $1\le r\le n-1$ and $s(n,r)$ is the Stirling number of the first kind defined in Eq.  (\ref{eq:defsn}).
Then, from Lemma \ref{lemma:muprod}, 
we have $\mu(\hat{0},\pi)=\mu(\hat{0},\pi_{1})\mu(\hat{0},\pi\setminus \pi_{1})$ 
where $\pi_1$ is a block of a weighted partition $\pi$ containing the point $1$ in the circular presentation 
and $\pi\setminus\pi_1$ is a weighted partition obtained from $\pi$ by deleting 
the block $\pi_1$.
If $\pi$ consists of $r$ blocks, $\pi_1$ consists of a single block and $\pi\setminus\pi_1$ consists of 
$r-1$ blocks.
If $\pi\setminus\pi_1$ consists of $i$ points, $\pi_1$ consists of $n-i$ points.
Since $i$ points in $\pi\setminus\pi_1$ do not belong to the same block as $1$, 
we have $\genfrac{(}{)}{0pt}{}{n-1}{i}$ ways to choose $i$ points.
The M\"obius functions for $\pi_1$ and $\pi\setminus\pi_1$ are given 
by the Whitney numbers: $\mu(\hat{0},\pi_1)=w^{(n-i,k)}_1$
and $\mu(\hat{0},\pi\setminus\pi_1)=w^{(i,k)}_{r-1}$.
Therefore, we have
\begin{align*}
w^{(n,k)}_{r}=\sum_{i=r-1}^{n-1}w^{(n-i,k)}_{1}\genfrac{(}{)}{0pt}{}{n-1}{i}w^{(i,k)}_{r-1},
\end{align*}
where $2\le r\le n$.
By substituting Eq. (\ref{eq:wnkinStirS1}) into the above expression and using 
the recurrence relation for the Stirling numbers of the first kind, 
we have $w^{(n,k)}_{r}=k^{n-r}s(n,r)$ for $2\le r\le n$.
From these observations, we have 
\begin{align*}
P_{n}^{(k)}(x)=\prod_{j=0}^{n-1}(x-kj) + C_{0}x, 
\end{align*}
where $C_{0}$ is a constant.
By comparing Eq. (\ref{eq:Pnkx1}) with Theorem \ref{thrm:Gamma}, 
we obtain $C_{0}=0$.
Thus, Eq. (\ref{eq:Pnkprod}) follows.
\end{proof}

\bibliographystyle{amsplainhyper} 
\bibliography{biblio}

\providecommand{\bysame}{\leavevmode\hbox to3em{\hrulefill}\thinspace}
\begin{thebibliography}{10}

\bibitem{Bar90}
H.~Barcelo, \emph{{O}n the action of the symmetric group on the free lie
  algebra and the partition lattice}, J. Combin. Theory A \textbf{55} (1990),
  93--129,
  \href{http://dx.doi.org/https://doi.org/10.1016/0097-3165(90)90050-7}{\path{doi}}.

\bibitem{BarBer90}
H.~Barcelo and N.~Bergeron, \emph{{T}he {O}rlik-{S}olomon algebra on the
  partition lattice and the free {L}ie algebra}, J. Combin. Theory A
  \textbf{55} (1990), 80--92,
  \href{http://dx.doi.org/https://doi.org/10.1016/0097-3165(90)90049-3}{\path{doi}}.

\bibitem{Bjo80}
A.~Bj\"orner, \emph{Shellable and {C}ohen-{M}acaulay partially ordered sets},
  Trans. Amer. Math. Soc. \textbf{260} (1980), no.~1, 159--183,
  \href{http://dx.doi.org/https://doi.org/10.1090/S0002-9947-1980-0570784-2}{\path{doi}}.

\bibitem{Bjo82}
A.~Bj{\"o}rner, \emph{{O}n the homology of geometric lattices}, Algebra
  Universalis \textbf{14} (1982), 107--128,
  \href{http://dx.doi.org/https://doi.org/10.1007/BF02483913}{\path{doi}}.

\bibitem{Bjo92}
\bysame, \emph{Homology and shellability of geometric lattices}, Matroid
  Applications (N.~White, ed.), Cambridge University Press, Cambridge, 1992,
  pp.~226--283.

\bibitem{BjoGarSta82}
A.~Bj\"orner, A.~Garsia, and R.~Stanley, \emph{An introduction to
  {C}ohen-{M}acaulay partially ordered sets}, Ordered Sets (I.~Rival, ed.),
  NATO Adv. Study Inst. Series, Ser. C: Math. Phys. Sci., Reidel, Dordrecht,
  1982.

\bibitem{BjoWac83}
A.~Bj\"orner and M.~L. Wachs, \emph{On lexicographically shellable posets},
  Trans. Amer. Math. Soc. \textbf{277} (1983), no.~1, 323--341,
  \href{http://dx.doi.org/https://doi.org/10.1090/S0002-9947-1983-0690055-6}{\path{doi}}.

\bibitem{GonDLeWac16}
R.~S.~Gonz\'alez D'Le\'on and M.~L. Wachs, \emph{{O}n the (co)homology of the
  poset of weighted partitions}, Trans. Amer. Math. Soc \textbf{368} (2016),
  6779--6818,
  \href{http://dx.doi.org/https://doi.org/10.1090/tran/6483}{\path{doi}}.

\bibitem{DotKho07}
V.~V. Dotsenko and A.~S. Khoroshkin, \emph{{C}haracter formulas for the operad
  of two compatible brackets and for the bi-{H}amiltonian operad}, Funktsional.
  Anal. i Prilozhen. \textbf{41} (2007), no.~1, 1--22, translation in {\it
  Funct. Anal. Appl.} \textbf{41} (2007), no.~1, 1--17,
  \href{http://dx.doi.org/https://doi.org/10.1007/s10688-007-0001-3}{\path{doi}}.

\bibitem{Ede80a}
P.~H. Edelman, \emph{{Z}eta {P}olynomials and the {M}{\"o}bius {F}unction},
  Europ. J. Combinatorics \textbf{1} (1980), 335--340,
  \href{http://dx.doi.org/https://doi.org/10.1016/S0195-6698(80)80034-5}{\path{doi}}.

\bibitem{Kre65}
G.~Kreweras, \emph{{S}ur une classe de probl{\`e}mes de d{\'e}nombrement
  li{\'e}s au treillis des portions des entiers}, {C}ahiers du {B}ureau
  universitaire de recherche op{\'e}rationnelle S{\'e}rie {R}echerche
  \textbf{6} (1965), 9--107,
  \url{http://www.numdam.org/item/BURO_1965__6__9_0/}.

\bibitem{Kre72}
\bysame, \emph{Sur les partitions non croisees d'un cycle}, Discrete Math.
  \textbf{1} (1972), no.~4, 333--350,
  \href{http://dx.doi.org/https://doi.org/10.1016/0012-365X(72)90041-6}{\path{doi}}.

\bibitem{Shi22}
K.~Shigechi, \emph{Noncommutative crossing partitions}, preprint (2022), 45
  pages, \href{http://arxiv.org/abs/2211.10958}{\path{arXiv:2211.10958}}.

\bibitem{Slo}
N.~J.~A. Sloane, \emph{{T}he {O}n-{L}ine {E}ncyclopedia of {I}nteger
  {S}equences}, \url{http://oeis.org}.

\bibitem{Sta74}
R.~P. Stanley, \emph{Finite lattices and {J}ordan--{H}{\"o}lder sets}, Algebra
  Universalis \textbf{4} (1974), 361--371,
  \href{http://dx.doi.org/https://doi.org/10.1007/BF02485748}{\path{doi}}.

\bibitem{Sta97b1}
\bysame, \emph{{E}numerative {C}ombinatorics}, vol.~1, Cambridge University
  Press, 1997.

\bibitem{Wac98}
M.~L. Wachs, \emph{On the (co)homology of the partition lattice and the free
  {L}ie algebra}, Discrete Math. \textbf{193} (1998), no.~1, 287--319,
  \href{http://dx.doi.org/https://doi.org/10.1016/S0012-365X(98)00147-2}{\path{doi}}.

\end{thebibliography}

\end{document}